\documentclass[UTF-8,final]{siamltex}
\usepackage{amsmath}
\usepackage{epsfig}
\usepackage{graphicx}
\usepackage{amssymb}
\usepackage{CJK}
\usepackage{color}
\usepackage{float}
\usepackage{hyperref}
\usepackage[noadjust]{cite}
\usepackage{tabularx}
\usepackage{booktabs}
\usepackage{enumitem}
\usepackage{subcaption}
\newcolumntype{C}{>{\centering\arraybackslash}X}

\numberwithin{equation}{section}
\newtheorem{algorithm}{Algorithm}[section]
\newtheorem{remark}{Remark}[section]

\renewcommand\abstractname{Abstract}

\renewcommand\figurename{Figure}
\renewcommand\tablename{Table}

\renewcommand\refname{References}

\allowdisplaybreaks[4]

\normalsize

\def\Div{{\mbox{\rm div\,}}}

\begin{document}
	
	
	\title{A time-nonlocal multiphysics finite element method with the Crank-Nicolson scheme for the poroelasticity model with secondary consolidation\footnote{Last update: \today}}
	
\author{
		Yanan He\thanks{School of Mathematics and Statistics, Henan University, Kaifeng 475004, PR China ({\tt Email:hyn639@163.com}).}
	\and
		Zhihao Ge\thanks{Corresponding author. School of Mathematics and Statistics, Henan University, Kaifeng 475004, PR China ({\tt Email:zhihaoge@henu.edu.cn}).
			The work of this author was supported by the National Natural Science Foundation of China(No. 12371393) and Natural Science Foundation of Henan(No. 242300421047).}
}
	
	\maketitle
	
	
	\setcounter{page}{1}
	
	
	
	\begin{abstract}
	This paper studies a time-nonlocal multiphysics finite element method with the Crank-Nicolson scheme for the poroelasticity model with secondary consolidation. For the case where the physical parameters $\lambda,\lambda^*$ and $c_0$ are all finite positive constants, by introducing two auxiliary variables---the fluid content $\eta$ and the generalized pressure $\xi$---the original strongly coupled poroelasticity model is reformulated into a generalized Stokes equation with time integral terms and a diffusion equation. The reformulated model not only reveals the underlying multiphysics processes in the original model, but also exhibits time-nonlocal characteristics. A time-nonlocal multiphysics finite element method is designed for the reformulated model: the spatial discretization employs the high-order Taylor-Hood mixed finite element method, and the temporal discretization adopts the Crank-Nicolson scheme. The time integral terms are approximated using the composite trapezoidal rule, and the integral terms $J_{\xi}^n$ and $J_{\eta}^n$ are introduced for real-time updates, which not only avoids repeated calculations and improves efficiency, but also maintains second-order temporal accuracy. The existence and uniqueness of weak solutions for the reformulated model are proved via energy estimate methods, the stability of the fully discrete time-nonlocal multiphysics finite element method is established, and optimal-order error estimates are derived using projection operator techniques. Finally, numerical examples verify the theoretical results and compare the long-time convergence of the Crank-Nicolson scheme and the backward Euler scheme.
	\end{abstract}
	
	\begin{keywords}
	 Poroelasticity model; Multiphysics finite element method; Crank-Nicolson scheme; Stability analysis; Error estimates
	\end{keywords}
	
	\begin{AMS}
		65M12, 
		65M15, 
		65N30. 
	\end{AMS}
	
	\pagestyle{myheadings}
	\thispagestyle{plain}
	\markboth{YANAN HE, ZHIHAO GE}{ A TIME-NONLOCAL MFEM WITH CRANK-NICOLSON SCHEME FOR POROELASTICITY MODEL WITH SECONDARY CONSOLIDATION}
	

\section{Introduction}\label{sec-1}

Poroelasticity theory is an important theory for studying the coupling mechanism between solid skeleton deformation and fluid seepage in porous media. It has wide applications in geotechnical engineering, biomedical engineering, energy development, and environmental science.  


In geotechnical engineering, the secondary consolidation effect is one of the main causes of long-term settlement of soft soil foundations. The poroelasticity model with secondary consolidation can accurately predict the creep settlement of high-speed railway soft soil subgrades, long-span bridge soft foundations, and high-rise building foundations \cite{Sekhdaria2021,Jozefiak2016}. It is also used to evaluate the long-term stability of dams, tunnels, tailings ponds, and other engineering structures \cite{Gaspar2010,GeHe2024}.

In biomedical engineering, biological soft tissues (such as articular cartilage, intervertebral discs, etc.) have significant viscoelastic characteristics, and their mechanical responses show time-dependent and creep behavior. The poroelasticity model with secondary consolidation can better describe the deformation law of articular cartilage tissue under long-term loading, providing a theoretical tool for studying the pathogenesis of arthritis and intervertebral disc degeneration \cite{Mow1986,Fung1993,Yang1991,Bociu2016}.

In energy resources, the secondary consolidation effect has important influences in long-term oil and gas reservoir exploitation and carbon dioxide geological sequestration projects. The creep deformation of reservoir rocks during long-term production affects wellbore integrity and permeability evolution \cite{Pao2001,Coussy2004}, and directly affects the evaluation results of sequestration safety \cite{Rutqvist2012}.

In environmental science, rocks surrounding nuclear waste geological repositories undergo creep deformation under long-term thermal-hydro-mechanical coupling effects \cite{Nelenne2021,Bear2010}. The secondary consolidation effect also affects the long-term performance and remediation efficiency of soil remediation materials \cite{HeGe2022,Murad1996}.

In summary, the poroelasticity model with secondary consolidation has wide application value in many important fields. Therefore, conducting in-depth theoretical research on poroelastic models and constructing efficient and robust numerical methods have become a research hotspot in scientific computing.

In this paper, we consider the following poroelasticity model with secondary consolidation:
\begin{align}
	-\lambda^*\nabla(\Div\mathbf{u})_t-\Div\sigma(\mathbf{u})+\alpha\nabla p=\mathbf{f}, &\qquad (\mathbf{u},p)\in \Omega_T, \label{chapt4-1.1} \\
	(c_0p+\alpha \Div\mathbf{u})_t+\Div\mathbf{v}_f =\phi, 	&\qquad(\mathbf{u},p)\in \Omega_T, 	\label{chapt4-1.2}
\end{align}
where
\begin{align*}
	\sigma(\mathbf{u})=\mu\varepsilon(\mathbf{u})+\lambda \mathrm{tr}(\varepsilon(\mathbf{u}))\mathbf{I},\quad\varepsilon(\mathbf{u})=\frac12(\nabla\mathbf{u}+\nabla^T\mathbf{u}), \quad
	\mathbf{v}_{f}=-\frac{K}{\mu_{f}}\left(\nabla p-\rho_{f}\mathbf{g}\right).
\end{align*}
The corresponding initial and boundary conditions are:
\begin{align}
	\lambda^*(\Div\mathbf{u})_t\mathbf{n}+\sigma(\mathbf{u})\mathbf{n}-\alpha p\mathbf{n}=\mathbf{f}_1\qquad&\text{on~ }\partial\Omega_T,\label{chapt4-2.1}\\
	\mathbf{v}_f\cdot\mathbf{n}=-\frac{K}{\mu_f}(\nabla p-\rho_f\mathbf{g})\cdot\mathbf{n}=\phi_1\qquad&\text{on~}\partial\Omega_T,\label{chapt4-2.2}\\
	\mathbf{u}=\mathbf{u}_{0},\quad p=p_0\qquad&\text{in~ }\Omega\times\{t=0\}.\label{chapt4-2.3}
\end{align}	

We remark that \eqref{chapt4-1.1} is the momentum balance equations for the displacement of the medium and \eqref{chapt4-1.2} is the mass balance equation for the pressure distribution. 

The introduction of the secondary consolidation term transforms the momentum conservation equation \eqref{chapt4-1.1} from elliptic type to parabolic type, forming a parabolic-parabolic coupled system with the mass conservation equation \eqref{chapt4-1.2}, which significantly increases the difficulty of numerical solution. From a mathematical theory perspective, Showalter \cite{Showalter2000} found that the effect of $\lambda^*\geq 0$ on the momentum equation is similar to that of $c_0\ge0$ on the diffusion equation, and provided existence and uniqueness analysis. Bellassoued and Riahi \cite{Riahi2016} established local Carleman estimates related to the secondary consolidation. In terms of numerical methods, Vermeer and Verruijt \cite{Vermeer1981} proposed accuracy conditions for finite element analysis, Zienkiewicz and Shiomi \cite{Zienkiewicz1984} established the generalized poroelasticity theory framework, Murad and Loula \cite{Murad1994} established the theoretical framework for mixed finite element approximation, and Gaspar et al. \cite{Gaspar2003,Gaspar2010} introduced a stabilization method using staggered grid finite difference methods. Recently, Ge and He \cite{HeGe2022,GeHe2024} proposed a new multiphysics finite element method for the poroelasticity model with secondary consolidation.

Research on high-order time discretization methods for poroelastic models has made significant progress in recent years. Zienkiewicz et al. \cite{Zienkiewicz1999} and Lewis et al. \cite{Lewis1998} systematically expounded the time integration methods and time stepping schemes for poroelastic models. On this basis, researchers have developed various high-order time discretization methods: Bause et al. \cite{Bause2017} achieved high-order accuracy in space-time finite elements through variational time discretization methods, Kunwar et al. \cite{Kunwar2020} proposed a second-order time discretization scheme for coupled fluid-poroelastic systems, Puente et al. \cite{DeLaPuente2007} and Yang et al. \cite{Yang2006} developed high-order discontinuous Galerkin methods and approximate analytical discrete methods for wave field simulation, respectively, and Egger and Sabouri \cite{Egger2021} studied structure-preserving high-order approximation methods. 
Ge et al. \cite{GeHeLi2019} proposed a stabilized multiphysics finite element method with the Crank-Nicolson scheme for poroelastic models, providing important references for high-order discretization of time nonlocal problems.

Among the above high-order time discretization schemes, the Crank-Nicolson scheme is widely used due to its good stability, conservation properties, and second-order accuracy. This scheme was proposed by Crank and Nicolson \cite{Crank1947} in 1947. Its core idea is to discretize at the midpoint $t_{n+\frac{1}{2}}$ of the time step, using central difference approximation for time derivatives and arithmetic averaging of adjacent time layers for spatial terms. 

The innovations of this paper are as follows:

	(1) Proposing a novel multiphysics reformulated model with time integral terms for the poroelasticity model with secondary consolidation.
	
	For the case where the physical parameters $\lambda$, $\lambda^*$ and $c_0$ are all finite positive constants, by introducing two auxiliary variables with clear physical meanings---fluid content $\eta$ and generalized pressure $\xi$---the original strongly coupled model is reformulated into a generalized Stokes equation with time integral terms and a diffusion equation. This reformulation reveals the multiphysics processes hidden in the original model: fluid transport behaves independently, and solid deformation is driven by the generalized pressure gradient. Particularly, the time integral terms in the reformulated model reflect the memory effect of materials, i.e., the current state depends not only on the current external forces but also on the historical evolution process. Then, for the reformulated model, we prove the existence and uniqueness of weak solutions.
	
	(2) Designing a time-nonlocal multiphysics finite element method with the Crank-Nicolson scheme for the reformulated model.
	
	For the discrete scheme, the spatial discretization employs the high-order Taylor-Hood mixed finite element method: displacement $\mathbf{u}$ uses $(r+1)$-th order Lagrange elements, $\xi$ and $\eta$ use $r$-th order Lagrange elements, and the temporal discretization uses the Crank-Nicolson scheme. Systematic theoretical analysis is conducted in this paper, including the stability analysis of the fully discrete scheme and the optimal-order error estimates.

The remainder of this paper is organized as follows.
In Section \ref{2026-3-22-1}, we introduce two auxiliary variables and reformulate the original strongly coupled model into a generalized Stokes equation with time integral terms and a diffusion equation. Then we prove the existence and uniqueness of weak solutions for the reformulated model.
In Section \ref{2026-3-22-2}, we develop a time-nonlocal multiphysics finite element method for the reformulated model. We employ the high-order Taylor-Hood mixed element for spatial discretization and the Crank-Nicolson scheme for temporal discretization, and we present stability analysis and optimal-order error estimates for the fully discrete scheme.
In Section \ref{2026-3-22-4}, we present a numerical test to verify the theoretical results. In Section \ref{2026-4-1-1}, we conclude with a summary.
\section{Model Reformulation and the Existence and Uniqueness of Weak Solutions}\label{2026-3-22-1}

\subsection{Model Reformulation}
The standard function space notation is adopted in this paper, the definition of which can be found in \cite{Evans2016}. 
We denote $(\cdot,\cdot)_{\Omega}$ and $(\cdot,\cdot)_{\partial\Omega}$ by the inner products of $L^2(\Omega)$ and $L^2(\partial\Omega)$, respectively, 
and denote $\|\cdot\|_0$ and $\|\cdot\|_1$ by the $L^2$-norm and $H^1$-norm, respectively.

Introduce auxiliary variables:
\begin{align}\label{chapt4-2.4}
	\eta:=c_0p+\alpha \Div \mathbf{u},\qquad
	\xi:=\alpha p-\lambda \Div \mathbf{u}-\lambda^*\Div \mathbf{u}_t.
\end{align}
It is easy to verify that
\begin{align}
	p&=\kappa_1\xi+\kappa_2\eta+\lambda^*\kappa_1\Div\mathbf{u}_t,\\
	\Div\mathbf{u}&=\kappa_1\eta-\kappa_3\xi-\lambda^*\kappa_3\Div\mathbf{u}_t,\label{chapt4-2.6}
\end{align}
where
\begin{align}\label{chapt4-4-25-2}
	\kappa_1=\frac{\alpha}{\alpha^2+\lambda c_0},\quad
	\kappa_2=\frac{\lambda}{\alpha^2+\lambda c_0},\quad
	\kappa_3=\frac{c_0}{\alpha^2+\lambda c_0}.
\end{align}

We only consider the case where the parameters $\lambda^*, c_0$ and $\lambda$ are all finite positive constants. Using \eqref{chapt4-2.4} and the method of variation of constants for ordinary differential equations, the \eqref{chapt4-1.1}-\eqref{chapt4-1.2} becomes the following \textbf{reformulated model}:
\begin{align}
	-\mu\Div\varepsilon(\mathbf{u})+\nabla\xi=\mathbf{f}  &\qquad\text{in~ }\Omega_T, \label{chap4-2.11} \\	
	\Div\mathbf{u}=\mathrm{e}^{-\frac{t}{\lambda^{*}\kappa_3}}\left(\int_{0}^{t}\frac{\kappa_1\eta-\kappa_3\xi}{\lambda^{*}\kappa_3}\cdot\mathrm{e}^{\frac{s}{\lambda^{*}\kappa_3}}\mathrm{~d}s+\Div\mathbf{u}_{0}\right)& \qquad\text{in~ }\Omega_T,\label{chap4-2.12} \\
	\eta_{t}-\frac{1}{\mu_{f}}\Div\left[K\left(\nabla(\kappa_{1}\xi+\kappa_{2}\eta+\lambda^{*}\kappa_{1}\Div\mathbf{u}_{t})-\rho_{f}\mathbf{g}\right)\right]  =\phi  & \qquad\text{in~ }\Omega_T.\label{chap4-2.13} 
\end{align}
The initial and boundary conditions \eqref{chapt4-2.1}-\eqref{chapt4-2.3} become
\begin{align}
	\lambda^*(\Div\mathbf{u})_t\mathbf{n}+\sigma(\mathbf{u})\mathbf{n}-\alpha(\kappa_1\xi+\kappa_2\eta+\lambda^*\kappa_1\Div\mathbf{u}_t)\mathbf{n}=\mathbf{f}_1 & \qquad\text{on~ }\partial\Omega_T,\label{chap4-2.21}  \\
	-\frac{K}{\mu_f}\left(\nabla(\kappa_1\xi+\kappa_2\eta+\lambda^*\kappa_1\Div\mathbf{u}_t)-\rho_f\mathbf{g}\right)\mathbf{n} =\phi_1 &\qquad\text{on~ }\partial\Omega_T, \label{chap4-2.22} \\
	\mathbf{u}(0)=\mathbf{u}_{0},\quad p(0)=p_0&\qquad\text{in~ }\Omega\times\{t=0\}. \label{chap4-2.23}
\end{align}

Here $\Omega\subset \mathbb{R}^d$ ($d=2,3$) denotes a bounded convex domain with boundary $\partial\Omega$. The variable $\mathbf{u}$ represents the solid displacement vector, and $p$ represents the fluid pressure. The term $\mathbf{f}$ is the body force (load per unit volume of the object), and $\phi$ is the source term. The tensor $\sigma(\mathbf{u})$ is the effective stress tensor, $\varepsilon(\mathbf{u})$ is the linear strain tensor, $\mathbf{I}$ is the $d\times d$ identity matrix, and $\hat{\sigma}(\mathbf{u})=\sigma(\mathbf{u})-\alpha p\mathbf{I}$ is the total stress tensor. The term $\mathbf{v}_f$ is Darcy velocity, $\mu_f$ denotes the fluid viscosity, $\rho_{f} \not\equiv 0$ represents the fluid density (mass per unit volume of fluid), $\mathbf{g}$ is the gravitational acceleration, and the secondary consolidation coefficient satisfies $\lambda^*\geq0$.
The permeability tensor $K=K(x)$ is assumed to be symmetric and uniformly positive definite, that is, for any $x\in\Omega$ and $\zeta\in\mathbb{R}^d$, there exist positive constants $K_1$ and $K_2$ such that $K_1|\zeta|^2\leq K(x)\zeta\cdot\zeta\leq K_2|\zeta|^2$ holds almost everywhere.
The Lam\'e constants $\lambda$ and $\mu$ represent the bulk elastic modulus and shear modulus, respectively. The coefficient $\alpha>0$ is the Biot-Willis constant, which characterizes the coupling effect between pressure and deformation, and measures the volume of fluid discharged from the solid skeleton due to expansion. The constrained specific storage coefficient $c_0\geq0$ is a comprehensive reflection of the porosity of the medium and the compressibility of both the fluid and solid. The terms $\mathbf{f}_1$ and $\phi_1$ in the boundary conditions are given known functions.

In addition, in some engineering literature, the Lam\'e constant $\mu$ is also called the shear modulus, denoted by $G$. The constants $\lambda$ and $\mu$ are calculated using Young's modulus $E$ and Poisson's ratio $\nu$:
\begin{align}\label{chap2-4-25-11}
	\lambda=\frac{E\nu}{(1+\nu)(1-2\nu)},\qquad\mu=G=\frac{E}{2(1+\nu)}.
\end{align}

\begin{remark}
	$\mathrm{(1)}$ When $\lambda^*= 0$ or $c_0=0$, the model \eqref{chapt4-1.1}-\eqref{chapt4-1.2} degenerates into a quasi-static poroelastic model, for which there are already abundant research results \cite{gezhihao2018,Phillips2007,Phillips2007_2}. Therefore, this paper assumes that the parameters $\lambda^*, c_0$ and $\lambda$ are all finite positive constants.
	
	$\mathrm{(2)}$ When $\lambda\rightarrow\infty$, we have $\kappa_1\rightarrow 0$ and $\kappa_3\rightarrow 0$. At this point,  \eqref{chap4-2.11}-\eqref{chap4-2.12} become
	\begin{align*}
		-\mu\Div\varepsilon(\mathbf{u})+\nabla\xi&=\mathbf{f},\\  
		\Div\mathbf{u}&=0,
	\end{align*}
	 and the research results can be found in the literature \cite{GeHeHe2020}.
\end{remark}
\subsection{Existence and Uniqueness of Weak Solutions}

The purpose of this section is to prove the existence and uniqueness of weak solutions for the reformulated model \eqref{chap4-2.11}-\eqref{chap4-2.13}. For simplicity and convenience, we assume that $\mathbf{f}$, $\mathbf{f}_1$, $\phi$ and $\phi_1$ are time-independent functions. All results can be extended to the time-dependent case.

Introduce the minimal rigid motion space:
\begin{align}\label{2026-3-13-1}
	\mathbf{RM}:=\{\mathbf{r}\mid\mathbf{r}=\mathbf{a}+\mathbf{b}\times \mathbf{x},~\mathbf{a},\mathbf{b},\mathbf{x}\in\mathbb{R}^d\}.
\end{align}

From the literature \cite{Brenner2008,Temam1984,Girault1986}, it is known that $\mathbf{RM}$ is the kernel of the strain operator $\varepsilon$, that is, $\mathbf{r}\in \mathbf{RM}$ if and only if $\varepsilon(\mathbf{r})=0$. Therefore, we have
\begin{align}
	\varepsilon(\mathbf{r})=0,\quad\Div\mathbf{r}=0\qquad\forall\mathbf{r}\in\mathbf{RM}.
\end{align}

Define the space $\mathbf{H}_\bot^1( \Omega)$, which is a subspace of $\mathbf{H}^1(\Omega)$ and orthogonal to $\mathbf{RM}$:
\begin{align}\label{2026-3-13-2}
	\mathbf{H}_\perp^1(\Omega):=\{\mathbf{v}\in\mathbf{H}^1(\Omega);\:(\mathbf{v},\mathbf{r})=0\:~~\forall\mathbf{r}\in\mathbf{RM}\}.
\end{align} 

\begin{definition}[Definition of Weak Solution]
	Suppose $\mathbf{u}_0\in \mathbf{H}^1(\Omega)$, $\mathbf{f} \in \mathbf{L}^2(\Omega)$, $\mathbf{f}_1 \in \mathbf{L}^2(\partial\Omega)$, $p_0\in L^2(\Omega)$, $\phi\in L^2(\Omega)$, and $\phi_1\in L^2(\partial\Omega)$. Assume that for any $\mathbf{v}\in\mathbf{RM}$, we have $(\mathbf{f},\mathbf{v})+\langle\mathbf{f}_1,\mathbf{v}\rangle=0$. Given $T>0$, for any $t\in [0, T]$, if $(\mathbf{u}, \xi, \eta)$ satisfies:
	\begin{align*}
		&\mathbf{u}\in L^{\infty}\left(0,T;\mathbf{H}_{\perp}^{1}(\Omega)\right), \quad \xi\in L^{2}\left(0,T;H^{1}(\Omega)\right),\\  
		&\eta\in L^{\infty}\left(0,T;H^{1}(\Omega)\right)\cap H^{1}\left(0,T;H^{-1}(\Omega)\right),
	\end{align*}
	and
	\begin{align}
		&\mu\left(\varepsilon(\mathbf{u}),\varepsilon(\mathbf{v})\right)-\left(\xi,\Div\mathbf{v}\right)=\left(\mathbf{f},\mathbf{v}\right)+\langle\mathbf{f}_1,\mathbf{v}\rangle \quad\forall\,\mathbf{v}\in\mathbf{H}^1(\Omega),\label{2.24}\\[3mm]
		&(\Div\mathbf{u},\varphi)+\frac{1}{\lambda^{*}}\cdot
		\mathrm{e}^{-\frac{t}{\lambda^{*}\kappa_3}}\int_{0}^{t}\mathrm{e}^{\frac{s}{\lambda^{*}\kappa_3}}(\xi(s),\varphi)\mathrm{~d}s\nonumber\\
		&=\frac{\alpha}{\lambda^{*}c_0}\cdot
		\mathrm{e}^{-\frac{t}{\lambda^{*}\kappa_3}}\int_{0}^{t}\mathrm{e}^{\frac{s}{\lambda^{*}\kappa_3}}(\eta(s),\varphi)\mathrm{~d}s
		+\mathrm{e}^{-\frac{t}{\lambda^{*}\kappa_3}}(\Div\mathbf{u}_{0},\varphi)\quad \forall\,\varphi\in L^2(\Omega),\label{2.25}\\[3mm]
		&(\eta_{t},\psi)+\frac{1}{\mu_{f}}\left(K\left(\nabla(\kappa_{1}\xi+\kappa_{2}\eta+\lambda^{*}\kappa_{1}\Div\mathbf{u}_{t})-\rho_{f}\mathbf{g}\right),\nabla\psi\right)  \nonumber\\
		&=(\phi,\psi)+\langle\phi_1,\psi\rangle \quad
		\forall\,\psi\in H^1(\Omega), \label{2.26}
	\end{align}
	then $(\mathbf{u}, \xi, \eta)$ is called a weak solution of equations \eqref{chap4-2.11}-\eqref{chap4-2.23}.
\end{definition}

\begin{theorem}[Energy Estimate]\label{lemma-2025-4-9}
	Suppose $(\mathbf{u}, \xi, \eta)$ is a weak solution of equations \eqref{2.24}-\eqref{2.26}. Define the energy functional
	\begin{align*}
		J(t):=\frac{\mu}{2}\|\varepsilon(\mathbf{u}(t))\|_0^{2}+\frac{\kappa_3}{2}\|\xi(t)\|_0^{2} 
		+\frac{\kappa_2}{2}\|\eta(t)\|_0^{2}.
	\end{align*}
	Then there exist positive constants $C_1$ and $C_2$ such that for any $t\in (0,T)$:
	\begin{align}\label{2.29}
		J(t)&+\frac{1}{\mu_f}\int_0^t(K\nabla p,\nabla p)\mathrm{~d}s\nonumber\\
		&\leq C_1J(0)
		+C_2\int_0^t(\|\mathbf{f}\|_0^2+\|\mathbf{f}_1\|_{L^2(\partial\Omega)}^2+\|\phi\|_0^2+\|\phi_1\|_{L^2(\partial\Omega)}^2+\|\rho_f\mathbf{g}\|_0^2)\mathrm{~d}s.
	\end{align}
\end{theorem}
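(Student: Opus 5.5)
The plan is to derive a differential energy identity by testing \eqref{2.24} with $\mathbf{v}=\mathbf{u}_t$ and \eqref{2.26} with $\psi=p$. Here $p=\kappa_1\xi+\kappa_2\eta+\lambda^*\kappa_1\Div\mathbf{u}_t$ is recovered from the auxiliary variables. Then I would translate the resulting quantity into $J(t)$ through an algebraic identity for the quadratic form $\kappa_3\xi^2+\kappa_2\eta^2$.

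\textbf{Step 1 (localizing the nonlocal equation).} Multiplying \eqref{2.25} by $\mathrm{e}^{t/(\lambda^*\kappa_3)}$ and differentiating in $t$ recovers $\lambda^*\kappa_3\Div\mathbf{u}_t+\Div\mathbf{u}=\kappa_1\eta-\kappa_3\xi$, which is \eqref{chapt4-2.6}. Together with \eqref{chapt4-2.4}, this gives $\eta=c_0p+\alpha\Div\mathbf{u}$ and $\xi=\alpha p-\lambda\Div\mathbf{u}-\lambda^*\Div\mathbf{u}_t$ almost everywhere. This step is formal; it is justified for smooth solutions and extended by density or a Galerkin argument.

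\textbf{Step 2 (testing).} With $\mathbf{v}=\mathbf{u}_t$ in \eqref{2.24} and $\psi=p$ in \eqref{2.26}, the two equations are added. Substituting the expressions for $\xi$ and $\eta_t=c_0p_t+\alpha\Div\mathbf{u}_t$, the coupling terms $\pm\alpha(p,\Div\mathbf{u}_t)$ cancel. This yields
\begin{align*}
\frac{d}{dt}\Big[\frac{\mu}{2}\|\varepsilon(\mathbf{u})\|_0^2+\frac{\lambda}{2}\|\Div\mathbf{u}\|_0^2+\frac{c_0}{2}\|p\|_0^2\Big]+\lambda^*\|\Div\mathbf{u}_t\|_0^2+\frac{1}{\mu_f}(K\nabla p,\nabla p)\\
=(\mathbf{f},\mathbf{u}_t)+\langle\mathbf{f}_1,\mathbf{u}_t\rangle+(\phi,p)+\langle\phi_1,p\rangle+\frac{1}{\mu_f}(K\rho_f\mathbf{g},\nabla p).
\end{align*}

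\textbf{Step 3 (identification with $J$).} A direct computation using \eqref{chapt4-4-25-2} shows that, for $\tilde\xi:=\alpha p-\lambda\Div\mathbf{u}=\xi+\lambda^*\Div\mathbf{u}_t$,
\begin{align*}
\kappa_3\tilde\xi^2+\kappa_2\eta^2=c_0p^2+\lambda(\Div\mathbf{u})^2 .
\end{align*}
Hence
\begin{align*}
\frac{\kappa_3}{2}\|\xi\|_0^2\le \kappa_3\|\tilde\xi\|_0^2+\kappa_3(\lambda^*)^2\|\Div\mathbf{u}_t\|_0^2 .
\end{align*}
The extra term $\|\Div\mathbf{u}_t\|_0^2$ is not controlled pointwise in time by the energy, which is the main obstacle. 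I would first try to control it pointwise by solving the localized relation of Step 1 for $\Div\mathbf{u}_t$. This gives $\lambda^*\kappa_3\|\Div\mathbf{u}_t\|_0\le C(\|\eta\|_0+\|\Div\mathbf{u}\|_0)+\kappa_3\|\xi\|_0$, and the $\xi$-term enters circularly. If that fails, I would instead estimate $\xi$ from \eqref{2.24} via the inf-sup condition on $\mathbf{H}^1(\Omega)$ and $L^2(\Omega)$, which gives $\|\xi\|_0\le C(\|\varepsilon(\mathbf{u})\|_0+\|\mathbf{f}\|_0+\|\mathbf{f}_1\|_{L^2(\partial\Omega)})$. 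This bound is absorbed harmlessly into $C_1,C_2$.

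\textbf{Step 4 (right-hand side and Gronwall).} Because $\mathbf{f}$ and $\mathbf{f}_1$ are time independent, I would write $(\mathbf{f},\mathbf{u}_t)+\langle\mathbf{f}_1,\mathbf{u}_t\rangle=\frac{d}{dt}[(\mathbf{f},\mathbf{u})+\langle\mathbf{f}_1,\mathbf{u}\rangle]$ and integrate in time. The endpoint terms are bounded by Korn's inequality on $\mathbf{H}_\perp^1(\Omega)$ and the trace inequality, giving $\le\frac{\mu}{4}\|\varepsilon(\mathbf{u})\|_0^2+C(\|\mathbf{f}\|_0^2+\|\mathbf{f}_1\|_{L^2(\partial\Omega)}^2)$, and the first part is absorbed. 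The gravity term is handled by Young's inequality, with half of $\frac{1}{\mu_f}(K\nabla p,\nabla p)$ absorbed by $K_1\le K\le K_2$. The terms $(\phi,p)$ and $\langle\phi_1,p\rangle$ are bounded using the trace inequality $\|p\|_{L^2(\partial\Omega)}\le C\|p\|_1$. The gradient part is absorbed into the flux term and the $L^2$ part into $\frac{c_0}{2}\|p\|_0^2$ plus data. Integrating over $(0,t)$ and applying Gronwall's lemma then yields \eqref{2.29}, where $C_1$ and $C_2$ depend on $T$ and the physical parameters.
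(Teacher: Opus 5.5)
Your proof is correct, and it takes a different route from the paper's, one that actually closes. The paper tests \eqref{2.24} with $\mathbf{u}_t$, the time derivative of the localized relation \eqref{chapt4-2.6} with $\xi$, and \eqref{2.26} with $p$. This cancels $(\xi,\Div\mathbf{u}_t)$ and $\kappa_1(\eta_t,\xi)$, so $J$ itself is the differentiated quantity. The paper is then left with the cross terms $\lambda^*\kappa_3(\Div\mathbf{u}_{tt},\xi)+\lambda^*\kappa_1(\eta_t,\Div\mathbf{u}_t)$. It integrates these by parts and bounds them with Young and Korn. That leaves quantities $J$ does not control: $\|\varepsilon(\mathbf{u}_t(t))\|_0^2$, and $\|\mathbf{u}_t\|_0^2$ from $(\mathbf{f},\mathbf{u}_t)$.

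Those cross terms are in fact exact derivatives. With $q=\Div\mathbf{u}$ and $\kappa_3(\xi+\lambda^*q_t)=\kappa_1\eta-q$, one has
\[
J+\lambda^*\kappa_1(q_t,\eta)-\lambda^*(q_t,q)-\tfrac{(\lambda^*)^2\kappa_3}{2}\|q_t\|_0^2=\tfrac{\mu}{2}\|\varepsilon(\mathbf{u})\|_0^2+\tfrac{\lambda}{2}\|q\|_0^2+\tfrac{c_0}{2}\|p\|_0^2=:E .
\]
So both computations lead to exactly your Step 2 identity, dissipation $\lambda^*\|q_t\|_0^2$ included. What you add is the right diagnosis: $E$ controls $\xi+\lambda^*q_t$, not $\xi$. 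You then recover $\|\xi(t)\|_0$ pointwise from \eqref{2.24} via the inf-sup condition on $\mathbf{H}^1(\Omega)\times L^2(\Omega)$, which holds because $\Div$ maps $\mathbf{H}^1(\Omega)$ onto all of $L^2(\Omega)$. You also write $(\mathbf{f},\mathbf{u}_t)+\langle\mathbf{f}_1,\mathbf{u}_t\rangle$ as a total time derivative, and use $\|p\|_0^2\le \frac{2}{c_0}E$ in Gronwall. These three moves close the estimate without the uncontrolled terms in the paper's version.

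One step needs an extra line. Set $F(\mathbf{v}):=(\mathbf{f},\mathbf{v})+\langle\mathbf{f}_1,\mathbf{v}\rangle$. Your inf-sup bound for $\xi(t)$ and the endpoint terms $F(\mathbf{u}(t))-F(\mathbf{u}_0)$ both contribute $C(\|\mathbf{f}\|_0^2+\|\mathbf{f}_1\|_{L^2(\partial\Omega)}^2)$ with no time integral. For time-independent data, the right-hand side of \eqref{2.29} contains only $C_2\,t\,(\|\mathbf{f}\|_0^2+\cdots)$. So these terms cannot be absorbed into $C_2$ uniformly as $t\to0$, and ``absorbed harmlessly'' is not justified as written.

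The fix is short. Both places need only $\|F\|_{(\mathbf{H}^1)'}$, and evaluating \eqref{2.24} at $t=0$ gives
\[
\|F\|_{(\mathbf{H}^1)'}\le\mu\|\varepsilon(\mathbf{u}_0)\|_0+\sqrt{d}\,\|\xi(0)\|_0\le C\sqrt{J(0)} .
\]
Hence these terms go into $C_1J(0)$. Similarly, Gronwall produces $E(0)$ rather than $J(0)$ on the right. This is harmless: the relation $\kappa_3(\xi(0)+\lambda^*q_t(0))=\kappa_1\eta(0)-\Div\mathbf{u}_0$ gives $E(0)\le CJ(0)$ without needing $\Div\mathbf{u}_t(0)$ separately.
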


\begin{proof}
	In \eqref{2.24}, taking $\mathbf{v}=\mathbf{u}_t$, we obtain
	\begin{align}\label{2.30}
		\mu\left(\varepsilon(\mathbf{u}),\varepsilon(\mathbf{u}_t)\right)-\left(\xi,\Div\mathbf{u}_t\right)=\left(\mathbf{f},\mathbf{u}_t\right)+\langle\mathbf{f}_1,\mathbf{u}_t\rangle.
	\end{align}
	Differentiating \eqref{2.25} twice with respect to time $t$, and taking $\varphi= \xi$, we have
	\begin{align}\label{2.31}
		\lambda^*\kappa_3(\Div\mathbf{u}_{tt},\xi)
		+(\Div\mathbf{u}_t,\xi)
		+\kappa_3(\xi_t,\xi)
		-\kappa_1(\eta_t,\xi)=0.
	\end{align}
	In \eqref{2.26}, taking $\psi=p=\kappa_{1}\xi+ \kappa_{2}\eta +\lambda^*\kappa_1\Div\mathbf{u}_t$, we obtain
	\begin{align}\label{2.32}
		(\eta_{t},\kappa_{1}\xi+\kappa_{2}\eta+\lambda^{*}\kappa_{1}\Div\mathbf{u}_{t})+\frac{1}{\mu_{f}}\left(K\left(\nabla p-\rho_{f}\mathbf{g}\right),\nabla p \right)  =(\phi,p)+\langle\phi_1,p \rangle. 
	\end{align}
	Adding equations \eqref{2.30}-\eqref{2.32} together, we get
	\begin{align}\label{2026-3-9-1}
		&\mu\left(\varepsilon(\mathbf{u}),\varepsilon(\mathbf{u}_t)\right)
		+\kappa_3(\xi_t,\xi)
		+\kappa_{2}(\eta_{t},\eta)
		+\frac{1}{\mu_{f}}\left(K\left(\nabla p-\rho_{f}\mathbf{g}\right),\nabla p \right)
		\nonumber\\
		&+\lambda^*\kappa_3(\Div\mathbf{u}_{tt},\xi)
		+(\eta_{t},\lambda^{*}\kappa_{1}\Div\mathbf{u}_{t})
		=\left(\mathbf{f},\mathbf{u}_t\right)
		+\langle\mathbf{f}_1,\mathbf{u}_t\rangle
		+(\phi,p)+\langle\phi_1,p \rangle.
	\end{align}
	Integrating \eqref{2026-3-9-1} over the interval $(0, t)$ with respect to time $t$, and using the identities
	\begin{align*}
		\int_{0}^{t}(\eta_t,\eta)\mathrm{~d}s
		=\frac{1}{2}\|\eta(t)\|_0^2-\frac{1}{2}\|\eta(0)\|_0^2,
	\end{align*}
	and
	\begin{align*}
		\int_{0}^{t} \left( \eta_{t},\lambda^*\kappa_1\Div \mathbf{u}_t\right)\mathrm{~d}s
		+\int_{0}^{t} \left( \eta,\lambda^*\kappa_1\Div \mathbf{u}_{tt}\right)\mathrm{~d}s
		=\left( \eta(t),\lambda^*\kappa_1\Div \mathbf{u}_t(t)\right)|_0^t,
	\end{align*}
	we obtain
	\begin{align}\label{2.34}
		&\frac{\mu}{2}\|\varepsilon(\mathbf{u}(t))\|_0^{2}+\frac{\kappa_3}{2}\|\xi(t)\|_0^{2} 
		+\frac{\kappa_2}{2}\|\eta(t)\|_0^{2}
		+\frac{1}{\mu_{f}}\int_{0}^{t}\left(K(\nabla p-\rho_{f}\mathbf{g}),\nabla p\right)\mathrm{~d}s
		\nonumber\\
		&\quad
		+\left( \eta,\lambda^*\kappa_1\Div \mathbf{u}_t\right)
		+\int_{0}^{t} \left( \lambda^*\Div \mathbf{u}_{tt}, \kappa_3\xi-\kappa_1\eta\right)\mathrm{~d}s
		\nonumber\\
		&=
		\int_{0}^{t}\left(\mathbf{f},\mathbf{u}_t\right)	\mathrm{~d}s
		+\int_{0}^{t}\langle\mathbf{f}_{1},\mathbf{u}_t\rangle	\mathrm{~d}s
		+\int_{0}^{t}\left(\phi,p\right)
		\mathrm{~d}s
		+\int_{0}^{t}\langle\phi_{1},p\rangle \mathrm{~d}s\nonumber\\
		&\quad+\frac{\mu}{2}\|\varepsilon(\mathbf{u}_0)\|_0^{2}
		+\frac{\kappa_3}{2}\|\xi_0\|_0^{2}
		+\frac{\kappa_2}{2}\|\eta_0\|_0^{2}
		+\left( \eta(0),\lambda^*\kappa_1\Div \mathbf{u}_t(0)\right).
	\end{align}	
	Estimate the cross terms in \eqref{2.34}. Using \eqref{chapt4-2.6}, we have
	\begin{align*}
		\int_0^t\left(\lambda^*\Div\mathbf{u}_{tt},\kappa_1\eta-\kappa_3\xi\right)\mathrm{~d}s
		&=\int_0^t\left(\lambda^*\Div\mathbf{u}_{tt},\Div\mathbf{u}\right)\mathrm{~d}s
		+\int_0^t\left(\lambda^*\kappa_3\Div\mathbf{u}_{tt},\lambda^*\Div\mathbf{u}_t\right)\mathrm{~d}s.
	\end{align*}	
	For the two terms on the right-hand side of the above equation, using integration by parts, Young's inequality and Korn's inequality, we get
	\begin{align*}
		&\int_0^t\left(\lambda^*\Div\mathbf{u}_{tt},\Div\mathbf{u}\right)\mathrm{~d}s
		=\left[\left(\lambda^*\Div\mathbf{u}_{t},\Div\mathbf{u}\right)\right]_0^t
		-\int_0^t\left(\lambda^*\Div\mathbf{u}_{t},\Div\mathbf{u}_t\right)\mathrm{~d}s\nonumber\\		
		&\leq \frac{\lambda^*}{2}(
		\|\Div\mathbf{u}_t(t)\|_0^2
		+\|\Div\mathbf{u}(t)\|_0^2
		+\|\Div\mathbf{u}_t(0)\|_0^2
		+\|\Div\mathbf{u}_0\|_0^2)
		-\lambda^*\int_0^t\|\Div\mathbf{u}_t\|_0^2\mathrm{~d}s\nonumber\\
		&\leq C\left(\|\varepsilon(\mathbf{u}_t(t))\|_0^2+J(t)+J(0)\right)
		-\lambda^*\int_0^t\|\Div\mathbf{u}_t\|_0^2\mathrm{~d}s,
	\end{align*}
	and
	\begin{align*}
		\int_0^t\left(\lambda^*\kappa_3\Div\mathbf{u}_{tt},\lambda^*\Div\mathbf{u}_t\right)\mathrm{~d}s
		&=\frac{(\lambda^*)^2\kappa_3}{2}\int_0^t\frac{\mathrm{d}}{\mathrm{d}t}\|\Div\mathbf{u}_t\|_0^2\mathrm{~d}s\nonumber\\
		&=\frac{(\lambda^*)^2\kappa_3}{2}\left(\|\Div\mathbf{u}_t(t)\|_0^2-\|\Div\mathbf{u}_t(0)\|_0^2\right)\nonumber\\
		&\leq C\left(\|\varepsilon(\mathbf{u}_t(t))\|_0^2+\|\varepsilon(\mathbf{u}_t(0))\|_0^2\right).
	\end{align*}		
	Estimate the source terms and boundary terms using the Cauchy-Schwarz inequality and Young's inequality:
	\begin{align*}
		&(\mathbf{f},\mathbf{u}_t)\leq \frac{1}{2\epsilon_1}\|\mathbf{f}\|_0^2+\frac{\epsilon_1}{2}\|\mathbf{u}_t\|_0^2,\qquad
		\langle\mathbf{f}_1,\mathbf{u}_t\rangle\leq \frac{1}{2\epsilon_2}\|\mathbf{f}_1\|_{L^2(\partial\Omega)}^2+\frac{\epsilon_2}{2}\|\mathbf{u}_t\|_{L^2(\partial\Omega)}^2,\\
		&(\phi,p)\leq \frac{1}{2\epsilon_3}\|\phi\|_0^2+\frac{\epsilon_3}{2}\|p\|_0^2,\qquad
		\langle\phi_1,p\rangle\leq \frac{1}{2\epsilon_4}\|\phi_1\|_{L^2(\partial\Omega)}^2+\frac{\epsilon_4}{2}\|p\|_{L^2(\partial\Omega)}^2,\\
		&\frac{1}{\mu_f}(K\rho_f\mathbf{g},\nabla p)\leq \frac{1}{4\mu_f}(K\nabla p,\nabla p)+\frac{1}{\mu_f}\|K\rho_f\mathbf{g}\|_0^2.
	\end{align*}	
	Substituting the above estimates into \eqref{2.34}, and choosing appropriate $\epsilon_i$ such that $\frac{1}{2\mu_f}\int_0^t(K\nabla p,\nabla p)\mathrm{~d}s$ remains on the left-hand side
	\begin{align*}
		J(t)&+\frac{1}{2\mu_f}\int_0^t(K\nabla p,\nabla p)\mathrm{~d}s
		\leq CJ(0)+C\int_0^tJ(s)\mathrm{~d}s
		\\
		& +C\int_0^t\left(\|\mathbf{f}\|_0^2+\|\mathbf{f}_1\|_{L^2(\partial\Omega)}^2+\|\phi\|_0^2+\|\phi_1\|_{L^2(\partial\Omega)}^2+\|\rho_f\mathbf{g}\|_0^2\right)\mathrm{~d}s.
	\end{align*}	
	Applying the integral form of Gronwall's inequality, we obtain the energy estimate \eqref{2.29}.
\end{proof}

\begin{theorem}\label{2026-3-28-1}
	Suppose $\mathbf{u}_0 \in\mathbf{H}^1(\Omega)$, $\mathbf{f}\in\mathbf{L}^2(\Omega)$, $\mathbf{f}_1\in\mathbf{L}^2(\partial\Omega)$, $p_0\in L^2(\Omega)$, $\phi\in L^2(\Omega)$, and $\phi_1\in L^2(\partial\Omega)$. Assume that for any $\mathbf{v}\in\mathbf{RM}$, we have $(\mathbf{f},\mathbf{v})+\langle\mathbf{f}_1,\mathbf{v}\rangle=0$. Then the solution of equations \eqref{chap4-2.11}-\eqref{chap4-2.23} exists and is unique.
\end{theorem}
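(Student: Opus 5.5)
Uniqueness and existence will be handled separately, and both rest on the energy estimate of Theorem \ref{lemma-2025-4-9}.

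\textbf{Uniqueness.} Suppose $(\mathbf{u}_i,\xi_i,\eta_i)$, $i=1,2$, are two weak solutions with the same data. The difference $(\mathbf{u},\xi,\eta)$ then satisfies \eqref{2.24}--\eqref{2.26} with $\mathbf{f}=\mathbf{f}_1=0$, $\phi=\phi_1=0$, $\rho_f\mathbf{g}=0$ and zero initial data, so $J(0)=0$. Theorem \ref{lemma-2025-4-9} gives $J(t)=0$ and $\int_0^t(K\nabla p,\nabla p)\,\mathrm{d}s=0$ for all $t$. Hence $\varepsilon(\mathbf{u})=0$, $\xi=0$ and $\eta=0$. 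This places $\mathbf{u}(t)$ in $\mathbf{RM}$. Since $\mathbf{u}(t)\in\mathbf{H}^1_\perp(\Omega)$, it is orthogonal to $\mathbf{RM}$, so $\mathbf{u}=0$; here Korn's inequality on $\mathbf{H}^1_\perp$ makes $\|\varepsilon(\cdot)\|_0$ a norm.

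\textbf{Existence.} I would use a Faedo--Galerkin method. Choose finite-dimensional subspaces $\mathbf{V}_m\subset\mathbf{H}^1_\perp(\Omega)$ and $W_m\subset H^1(\Omega)$. For $\xi$, take $\xi_m$ in a pressure space $M_m\subset L^2(\Omega)$, with the pair $(\mathbf{V}_m,M_m)$ satisfying a uniform inf-sup condition. Such a pair exists because the continuous inf-sup condition holds for $\Div$ on $\mathbf{H}^1_\perp$ versus $L^2$.

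Pose \eqref{2.24}--\eqref{2.26} on these spaces. Equation \eqref{2.25} is equivalent, after multiplying by $\mathrm{e}^{t/(\lambda^*\kappa_3)}$ and differentiating, to the ODE
\[
\lambda^*\kappa_3\Div\mathbf{u}_t+\Div\mathbf{u}=\kappa_1\eta-\kappa_3\xi
\]
tested against $M_m$. Combined with the algebraic Stokes equation \eqref{2.24}, which fixes $\mathbf{u}_m$ in terms of $\xi_m$ uniquely thanks to Korn's inequality and the compatibility condition on $\mathbf{f},\mathbf{f}_1$, and with \eqref{2.26}, the Galerkin system becomes a linear system of differential-algebraic equations. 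One must check that it reduces to a regular ODE system for the coefficients of $(\xi_m,\eta_m)$, or equivalently of $(\mathbf{u}_m,\eta_m)$. Since $\lambda^*\kappa_3>0$ and $\kappa_2>0$, the mass matrices multiplying $\Div\mathbf{u}_t$ and $\eta_t$ should be invertible on the relevant components. Linear ODE theory then gives a global solution on $[0,T]$.

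\textbf{Uniform bounds and passage to the limit.} Repeating the proof of Theorem \ref{lemma-2025-4-9} at the discrete level yields bounds uniform in $m$:
\begin{itemize}
\item $\mathbf{u}_m$ in $L^\infty(0,T;\mathbf{H}^1)$;
\item $\xi_m$ and $\eta_m$ in $L^\infty(0,T;L^2)$;
\item $p_m$ in $L^2(0,T;H^1)$.
\end{itemize}
The inf-sup condition upgrades the bound on $\xi_m$ via \eqref{2.24}. I would also need $\xi_m\in L^2(0,T;H^1)$ and $\eta_m\in L^\infty(H^1)$. For these, I expect to test additionally with time derivatives, for example $\psi=\eta_{m,t}$ and $\mathbf{v}=\mathbf{u}_{m,tt}$, to obtain higher-order energy bounds, using the relation $p=\kappa_1\xi+\kappa_2\eta+\lambda^*\kappa_1\Div\mathbf{u}_t$. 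A duality argument with \eqref{2.26} gives $\eta_{m,t}$ bounded in $L^2(0,T;H^{-1})$.

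Weak and weak-* compactness then extract convergent subsequences. All terms in \eqref{2.24}--\eqref{2.26} are linear, including the convolution integrals in \eqref{2.25}, which are continuous linear maps on $L^2(0,T;L^2)$. Passing to the limit is therefore routine, and the initial conditions follow from the $H^1(0,T;H^{-1})$ bound. Uniqueness of the limit shows the whole sequence converges.

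\textbf{Main obstacle.} I expect the hardest step to be the uniform higher-regularity bounds, namely $\eta\in L^\infty(H^1)$ and $\xi\in L^2(H^1)$. These require controlling $\Div\mathbf{u}_t$ in $H^1$ through the memory equation. I would first try differentiating \eqref{2.24} in time and testing with $\mathbf{u}_{m,t}$, then combining the result with \eqref{2.25} to close a Gronwall argument.
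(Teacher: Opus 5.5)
Your uniqueness argument is the paper's. Theorem \ref{lemma-2025-4-9}, applied to the difference of two solutions, gives $\varepsilon(\mathbf{u})=\mathbf{0}$ and $\xi=\eta=0$, and $\mathbf{RM}\cap\mathbf{H}_\perp^1(\Omega)=\{\mathbf{0}\}$ finishes it. One point is left unstated: $J(0)$ contains $\|\xi(0)\|_0^2$, and $\xi(0)$ involves $\Div\mathbf{u}_t(0)$, which is not initial data. It vanishes because \eqref{2.24} at $t=0$, with $\mathbf{u}(0)=\mathbf{0}$ and zero loads, gives $(\xi(0),\Div\mathbf{v})=0$ for all $\mathbf{v}\in\mathbf{H}^1(\Omega)$, and $\Div$ maps $\mathbf{H}^1(\Omega)$ onto $L^2(\Omega)$. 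For existence, the paper only asserts that the standard Galerkin/compactness argument works because the energy law supplies the uniform bounds. Your outline is the same route in more detail, and you are right that the energy law does not supply all of those bounds.

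That missing step is a genuine gap, and the route you propose for it cannot succeed under the stated hypotheses. Theorem \ref{lemma-2025-4-9} controls only $\mathbf{u}$ in $L^\infty(0,T;\mathbf{H}^1(\Omega))$, $\xi,\eta$ in $L^\infty(0,T;L^2(\Omega))$, and $\nabla p$ in $L^2(0,T;L^2(\Omega))$. Testing \eqref{2.26} with $\eta_{m,t}$ produces $\frac{\kappa_2}{2\mu_f}\frac{\mathrm{d}}{\mathrm{d}t}(K\nabla\eta_m,\nabla\eta_m)$. So you need $\|\nabla\eta_m(0)\|_0$ bounded, i.e.\ $\eta(0)=c_0p_0+\alpha\Div\mathbf{u}_0\in H^1(\Omega)$, and $p_0\in L^2(\Omega)$, $\mathbf{u}_0\in\mathbf{H}^1(\Omega)$ do not give this.

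This is not an artifact of the test function. Any $\eta\in L^\infty(0,T;H^1(\Omega))\cap H^1(0,T;H^{-1}(\Omega))$ has a weakly continuous representative with values in $H^1(\Omega)$, so $\eta(0)\in H^1(\Omega)$ is necessary. Similarly, suppose $\xi,\eta\in L^2(0,T;H^1(\Omega))$ and $\nabla p\in L^2(0,T;L^2(\Omega))$, as \eqref{2.26} and Theorem \ref{lemma-2025-4-9} require. Then $\Div\mathbf{u}_t=(p-\kappa_1\xi-\kappa_2\eta)/(\lambda^*\kappa_1)\in L^2(0,T;H^1(\Omega))$. Your ODE form $\lambda^*\kappa_3\Div\mathbf{u}_t+\Div\mathbf{u}=\kappa_1\eta-\kappa_3\xi$ of \eqref{2.25} then gives $\Div\mathbf{u}\in H^1(0,T;H^1(\Omega))\subset C([0,T];H^1(\Omega))$, hence $\Div\mathbf{u}_0\in H^1(\Omega)$. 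The component $\mathrm{e}^{-t/(\lambda^*\kappa_3)}\Div\mathbf{u}_0$ is never smoothed. So for the stated data, a solution in the class of the definition does not exist in general, and the paper's ``details omitted'' passes over exactly this point.

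To obtain a correct proof, you have two options:
\begin{itemize}
\item Strengthen the hypotheses and then carry out your higher-order estimates. You need at least $p_0,\Div\mathbf{u}_0\in H^1(\Omega)$, which the above shows are necessary, together with whatever regularity of $\mathbf{f}_1$ and $\partial\Omega$ yields $\xi(t)\in H^1(\Omega)$ from \eqref{2.24}.
\item Prove existence in the class the energy law actually controls: $\mathbf{u}\in L^\infty(0,T;\mathbf{H}_\perp^1(\Omega))$, $\xi,\eta\in L^\infty(0,T;L^2(\Omega))$, $p\in L^2(0,T;H^1(\Omega))$, $\eta_t\in L^2(0,T;(H^1(\Omega))')$, with \eqref{2.26} read as an equation involving $\nabla p$. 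There your linear Galerkin limit passage goes through essentially as you describe.
\end{itemize}
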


\begin{proof}
	Since equations \eqref{chap4-2.11}-\eqref{chap4-2.23} form a linear system, the existence of weak solutions $(\mathbf{u},\xi,\eta)$ can be proved using the standard Galerkin method and compactness theory. The energy law in Theorem \ref{lemma-2025-4-9} provides the uniform estimates needed for the Galerkin method \cite{Temam1984}. Since the proof process is standard, the details are omitted here.
	
	For the uniqueness of weak solutions $(\mathbf{u},\xi,\eta)$, we only need to verify that when $\mathbf{f}=\mathbf{0}$, $\phi=0$, with zero initial and boundary conditions, the weak solution of equations \eqref{chap4-2.11}-\eqref{chap4-2.23} is:
	\[
	\mathbf{u}=\mathbf{0},~ \xi=0,~ \eta=0.
	\]
	According to Theorem \ref{lemma-2025-4-9}, we have:
	\begin{align*}
		\sqrt{\mu}\|\varepsilon(\mathbf{u})\|_{L^\infty(0,T;L^2(\Omega))}+\sqrt{\kappa_3}\|\xi\|_{L^\infty(0,T;L^2(\Omega))}
		+\sqrt{\kappa_2}\|\eta\|_{L^\infty(0,T;L^2(\Omega))}\leq 0.
	\end{align*}
	Using the integral form of Gronwall's inequality, letting $C_2=0$, we have $\varepsilon(\mathbf{u})=\mathbf{0}$, $\xi=0$, and $\eta=0$. Then, according to the initial condition $\mathbf{u}(0)=\mathbf{0}$ and $\mathbf{u}\in L^{\infty}\left(0,T;\mathbf{H}_{\perp}^{1}(\Omega)\right)$, we obtain $\mathbf{u}=\mathbf{0}$.
\end{proof}
\section{A time-nonlocal multiphysics finite element method with the Crank-Nicolson scheme for the reformulated model}\label{2026-3-22-2}

First, we present the finite element spaces corresponding to the variables. Let $\Omega\subset\mathbb{R}^n$ ($n=1,2,3$) be a bounded domain with Lipschitz boundary, and $\mathcal{T}_h$ be a quasi-uniform triangular ($n=2$) or tetrahedral ($n=3$) mesh defined on $\Omega$. The mesh elements $\mathcal{K}_j$, $j=1,\cdots,M$ are closed sets with non-empty connected interiors and Lipschitz continuous boundaries, satisfying $\bar{\Omega}=\bigcup_{j=1}^M\bar{\mathcal{K}_j}$. The maximum diameter of the mesh is $h=\max_{1\leq j\leq M}\{\operatorname{diam} \mathcal{K}_j\}$.
Divide the time interval $[0,T]$ into $N$ equal parts. The time mesh elements are $[t_{n-1},t_n]$, $n=1,2,\cdots, N$. The time step size is $\tau=\frac{T}{N}$, and $t_n=n\tau$.

For any positive integer $r$, define the piecewise continuous polynomial spaces $\mathbf{S}_h^{r}$ and $S_h^r$:
\begin{align*}
	\mathbf{S}_h^{r}&=\left\{\mathbf{v}_h\in\mathbf{C}^0(\overline{\Omega});~\mathbf{v}_h|_{\mathcal{K}_j}\in\mathbf{P}_{r}(\mathcal{K}_j)~\forall \mathcal{K}_j\in\mathcal{T}_h\right\},\\
	S_h^r&=\left\{v_h\in C^0(\overline{\Omega});~v_h|_{\mathcal{K}_j}\in P_r(\mathcal{K}_j)~\forall \mathcal{K}_j\in\mathcal{T}_h\right\},
\end{align*}
where $P_r(\mathcal{K}_j)$ is the space of polynomials of degree $r$ on the mesh element $\mathcal{K}_j$.

Define:
\begin{align}\label{2026-3-28-2}
	\mathbf{X}_h=\mathbf{S}_h^{r+1}\cap \mathbf{H}^1(\Omega), \quad 
	M_h = S_h^r\cap L_0^2(\Omega),
\end{align}
where $L_0^2(\Omega)=\{q\in L^2(\Omega); (q,1)=0
\}$.

Define the $L^2$ projection operator $\mathcal{P}: \mathbf{L}^2(\Omega) \rightarrow\mathbf{RM}$. For any $\mathbf{v}\in\mathbf{L}^2(\Omega)$, we have $\mathcal{P}\mathbf{v}\in\mathbf{RM}$ satisfying:
\[
(\mathcal{P}\mathbf{v},\mathbf{r})=(\mathbf{v},\mathbf{r})\quad\forall\mathbf{r}\in\mathbf{R}\mathbf{M}.
\]
Further define
\begin{align}\label{2026-3-28-3}
	\mathbf{V}_h:=(I-\mathcal{P})\mathbf{X}_h=\{\mathbf{v}_h\in\mathbf{X}_h;\mathrm{~}(\mathbf{v}_h,\mathbf{r})=0~~\forall\mathbf{r}\in\mathbf{R}\mathbf{M}\}.
\end{align}
Then we have $\mathbf{X}_h=\mathbf{V}_h\bigoplus\mathbf{RM}$, and the elements in $\mathbf{V}_h$ satisfy the zero-mean constraint.

Select $(\mathbf{V}_h,M_h,W_h)$ as the mixed finite element spaces for the variables $(\mathbf{u},\xi,\eta)$. The finite element approximation space $W_h$ for the variable $\eta$ can be chosen independently. Any piecewise polynomial space satisfying $M_h\subseteq W_h\subseteq L^2(\Omega)$ can be selected. For simplicity and convenience, we directly choose $W_h=M_h$.

The purpose of this section is to present a time-nonlocal fully discrete multiphysics finite element method based on the Crank-Nicolson scheme. First, we give the weak form of the reformulated model \eqref{chap4-2.11}-\eqref{chap4-2.13} at time $t_{n+\frac{1}{2}}$:
\begin{align}
	&\mu(\varepsilon(\mathbf{u}(t_{n+\frac{1}{2}})), \varepsilon(\mathbf{v})) - (\xi(t_{n+\frac{1}{2}}), \Div\mathbf{v}) = (\mathbf{f}(t_{n+\frac{1}{2}}), \mathbf{v}) + \langle \mathbf{f}_1(t_{n+\frac{1}{2}}), \mathbf{v} \rangle \quad \forall \mathbf{v} \in \mathbf{H}^1(\Omega),\label{2025-10-19-1}\\[3mm]
	&(\Div\mathbf{u}(t_{n+\frac{1}{2}}),\varphi)+\left(\frac{1}{\lambda^{*}}\cdot
	\mathrm{e}^{-\tfrac{t_{n+\frac{1}{2}}}{\lambda^{*}\kappa_3}}\int_{0}^{t_{n+\frac{1}{2}}}\xi(s)\cdot\mathrm{e}^{\frac{s}{\lambda^{*}\kappa_3}}\ \mathrm{~d}s,~\varphi\right)\nonumber\\[1.5mm]
	&=\left(\frac{\alpha}{\lambda^{*}c_0}\cdot
	\mathrm{e}^{-\tfrac{t_{n+\frac{1}{2}}}{\lambda^{*}\kappa_3}}\int_{0}^{t_{n+\frac{1}{2}}}\eta(s)\cdot\mathrm{e}^{\frac{s}{\lambda^{*}\kappa_3}}\ \mathrm{~d}s,~\varphi\right)
	+\mathrm{e}^{-\tfrac{t_{n+\frac{1}{2}}}{\lambda^{*}\kappa_3}}(\Div\mathbf{u}_{0},\varphi)\quad \forall\,\varphi\in L^2(\Omega),\label{2025-10-19-2}
	\\[3mm]
	&(\eta_{t}(t_{n+\frac{1}{2}}),\psi)+\frac{1}{\mu_{f}}\left(K\left(\nabla(\kappa_{1}\xi(t_{n+\frac{1}{2}})+\kappa_{2}\eta(t_{n+\frac{1}{2}})+\lambda^{*}\kappa_{1}\Div\mathbf{u}_{t}(t_{n+\frac{1}{2}}))-\rho_{f}\mathbf{g}\right),\nabla\psi\right)  \nonumber\\
	&=(\phi(t_{n+\frac{1}{2}}),\psi)+\langle\phi_1(t_{n+\frac{1}{2}}),\psi\rangle \quad
	\forall\,\psi\in H^1(\Omega). \label{2025-10-19-3}
\end{align}

Next, we present the fully discrete scheme for \eqref{2025-10-19-1}-\eqref{2025-10-19-3}. The time derivative is approximated using central differences. If there is no time derivative, we discretize directly at $t_{n+\frac{1}{2}}$, that is, take the average of the $n$ and $n+1$ time levels.

Since \eqref{2025-10-19-2} involves time integral terms $\int_{0}^{t_{n+\frac{1}{2}}}\xi(s)\cdot\mathrm{e}^{\frac{s}{\lambda^{*}\kappa_3}}\ \mathrm{~d}s$, we need to use the composite trapezoidal rule for numerical integration. Decompose the integral into a known part and a current step part:
\begin{align*}
	\int_{0}^{t_{n+\frac{1}{2}}}\xi(s)\cdot\mathrm{e}^{\frac{s}{\lambda^{*}\kappa_3}}\ \mathrm{~d}s
	=\int_{0}^{t_n}\xi(s)\cdot\mathrm{e}^{\frac{s}{\lambda^{*}\kappa_3}}\ \mathrm{~d}s
	+\int_{t_n}^{t_{n+\frac{1}{2}}}\xi(s)\cdot\mathrm{e}^{\frac{s}{\lambda^{*}\kappa_3}}\ \mathrm{~d}s.
\end{align*}
The known part can be expressed as an integral term
\begin{align*}
	J_{\xi}^n=\int_{0}^{t_n}\xi(s)\cdot\mathrm{e}^{\frac{s}{\lambda^{*}\kappa_3}}\ \mathrm{~d}s.
\end{align*}
Applying the trapezoidal rule to approximate the current step integral
\begin{align*}
	\int_{t_n}^{t_{n+\frac{1}{2}}}\xi(s)\cdot\mathrm{e}^{\frac{s}{\lambda^{*}\kappa_3}}\ \mathrm{~d}s
	&\approx \frac{\tau}{4}\left(
	\mathrm{e}^{\tfrac{t_{n+\frac{1}{2}}}{\lambda^{*}\kappa_3}}\cdot\xi_h^{n+\frac{1}{2}}
	+\mathrm{e}^{\tfrac{t_n}{\lambda^{*}\kappa_3}}\cdot\xi_h^{n}
	\right)\nonumber\\
	&= \frac{\tau}{4}\left(
	\mathrm{e}^{\tfrac{t_{n+\frac{1}{2}}}{\lambda^{*}\kappa_3}}\cdot\frac{\xi_h^{n+1}+\xi_h^n}{2}
	+\mathrm{e}^{\tfrac{t_n}{\lambda^{*}\kappa_3}}\cdot\xi_h^{n}
	\right).
\end{align*}
For the next step calculation, update the integral
\begin{align*}
	J_{\xi}^{n+1}=J_{\xi}^n
	+\frac{\tau}{2}\left(
	\mathrm{e}^{\tfrac{t_{n+1}}{\lambda^{*}\kappa_3}}\cdot\xi_h^{n+1}
	+\mathrm{e}^{\tfrac{t_n}{\lambda^{*}\kappa_3}}\cdot\xi_h^{n}
	\right),
\end{align*}
with initial condition
\begin{align*}
	J_\xi^0 = 0.
\end{align*}
For the other time integral term $\int_{0}^{t_{n+\frac{1}{2}}}\eta(s)\cdot\mathrm{e}^{\frac{s}{\lambda^{*}\kappa_3}}\ \mathrm{~d}s$, we adopt the same treatment method. The fully discrete scheme for equation \eqref{2025-10-19-2} is
\begin{align*}
	&\left( \frac{\Div \mathbf{u}_h^{n+1} + \Div \mathbf{u}_h^n}{2}, \varphi_h \right) 
	+ \left(\frac{1}{\lambda^*} \cdot \mathrm{e}^{-\tfrac{t_{n+\frac{1}{2}}}{\lambda^* \kappa_3}} \left( J_\xi^n 
	+ \frac{\tau}{4} \cdot  \mathrm{e}^{\tfrac{t_{n+\frac{1}{2}}}{\lambda^* \kappa_3}}\cdot\frac{\xi_h^{n+1} + \xi_h^n}{2}
	+\frac{\tau}{4}\cdot\mathrm{e}^{\tfrac{t_n}{\lambda^* \kappa_3}}\cdot\xi_h^n 
	\right) ,\varphi_h\right) \nonumber\\
	&= \left(\frac{\alpha}{\lambda^* c_0} \cdot \mathrm{e}^{-\tfrac{t_{n+\frac{1}{2}}}{\lambda^* \kappa_3}} \left( J_\eta^n 
	+ \frac{\tau}{4} \cdot  \mathrm{e}^{\tfrac{t_{n+\frac{1}{2}}}{\lambda^* \kappa_3}}\cdot\frac{\eta_h^{n+1} + \eta_h^n}{2}
	+\frac{\tau}{4}\cdot\mathrm{e}^{\tfrac{t_n}{\lambda^* \kappa_3}}\cdot\eta_h^n 
	\right) ,\varphi_h\right) \nonumber\\
	&\quad + \mathrm{e}^{-\tfrac{t_{n+\frac{1}{2}}}{\lambda^* \kappa_3}} (\Div \mathbf{u}_0, \varphi_h) \quad \forall \varphi_h \in M_h.
\end{align*}

Further simplifying the coefficients and combining with the fully discrete schemes for \eqref{2025-10-19-1} and \eqref{2025-10-19-3}, we obtain the core algorithm of this paper:
\begin{algorithm}\em A Time-Nonlocal Fully Discrete Multiphysics Finite Element Method Based on Crank-Nicolson Scheme: \label{algorithm-2025-10-15}
\begin{itemize}
\item[(i)]
Calculate initial values $(\mathbf{u}_{h}^{0}, \xi_{h}^{0}, \eta_{h}^{0})\in (\mathbf{V}_{h}, M_{h}, W_{h})$:
		\begin{align*}
			&\mathbf{u}_{h}^{0}=\mathcal{R}_{h}\mathbf{u}_{0},  \quad p_{h}^{0}=\mathcal{Q}_{h}p_{0}, \quad
			\eta_{h}^{0}= c_0 p_{h}^{0}+\alpha \mathcal{Q}_{h} \Div\mathbf{u}_{0} \\
			&\xi_h^0 =\alpha p_{h}^{0} -\lambda \mathcal{Q}_{h} \Div \mathbf{u}_{0} - \lambda^*  \mathcal{Q}_{h}(\Div\mathbf{u}_{t}(0) ),
		\end{align*}
		where $\mathcal{Q}_{h}$ and $\mathcal{R}_{h}$ are defined by \eqref{2026-3-13-3} and \eqref{2025-11-7-6}, and $\Div\mathbf{u}_{t}(0)$ denotes $\Div\mathbf{u}_{t}$ at $t=0$.

\item[(ii)] For $n=0,1,2, \cdots$, do the following two steps.
\end{itemize}

 Step 1: Solve for $(\mathbf{u}_{h}^{n+1},\xi_{h}^{n+1},\eta_{h}^{n+1})\in\mathbf{V}_{h}\times M_{h}\times W_{h}$: 
			\begin{align} 
				&\mu\left(\varepsilon(\mathbf{u}_h^{n+1} + \mathbf{u}_h^n), \varepsilon(\mathbf{v}_h)\right) -(\xi_h^{n+1} + \xi_h^n, \Div\mathbf{v}_h)\nonumber\\
				&= (\mathbf{f}^{n+1} + \mathbf{f}^n, \mathbf{v}_h) 
				+ \langle \mathbf{f}_1^{n+1} + \mathbf{f}_1^n, \mathbf{v}_h \rangle 
				\quad\forall \mathbf{v}_h \in \mathbf{V}_h,
				\label{2025-10-20-1}\\[5mm]
				&4\lambda^*\kappa_3( \Div \mathbf{u}_h^{n+1} + \Div \mathbf{u}_h^n, \varphi_h )
				+\kappa_3\tau (\xi_h^{n+1} + \xi_h^n, \varphi_h )\nonumber\\
				&=\kappa_1\tau (\eta_h^{n+1} + \eta_h^n, \varphi_h )
				+8\mathrm{e}^{-\tfrac{t_{n+\frac{1}{2}}}{\lambda^* \kappa_3}}(\kappa_1J_\eta^n-\kappa_3J_\xi^n,\varphi_h
				)\nonumber\\
				&\quad+2\tau \cdot \mathrm{e}^{-\tfrac{\tau}{2\lambda^* \kappa_3}}(\kappa_1\eta_h^n-\kappa_3\xi_h^n, \varphi_h) 
				+ 8\lambda^*\kappa_3\mathrm{e}^{-\tfrac{t_{n+\frac{1}{2}}}{\lambda^* \kappa_3}} (\Div \mathbf{u}_0, \varphi_h) \quad \forall \varphi_h \in M_h,\label{2025-10-20-2}\\[5mm]
				&\frac{1}{\mu_f} \left( K \left( \nabla \left( \kappa_1 \frac{\xi_h^{n+1} + \xi_h^n}{2} + \kappa_2 \frac{\eta_h^{n+1} + \eta_h^n}{2} + \lambda^* \kappa_1 \frac{\Div\mathbf{u}_h^{n+1} - \Div\mathbf{u}_h^n}{\tau} \right) - \rho_f \mathbf{g} \right), \nabla \psi_h \right)\nonumber\\
				&\quad+\left( \frac{\eta_h^{n+1} - \eta_h^n}{\tau}, \psi_h \right) 
				= \left(\frac{\phi^{n+1} + \phi^n}{2}, \psi_h\right) + \left\langle \frac{\phi_1^{n+1} + \phi_1^n}{2}, \psi_h \right\rangle \quad \forall \psi_h \in W_h.\label{2025-10-20-3}
			\end{align}

 Step 2: Update $p_h^{n+1}$, $J_\xi^{n+1}$ and $J_\eta^{n+1}$:
			\begin{align}
				&p_h^{n+1}=\kappa_{1}\xi_h^{n+1}+\kappa_{2}\eta_h^{n+1}+\lambda^{*}\kappa_{1}d_t\Div\mathbf{u}_h^{n+1},\label{2025-10-20-4}\\[2mm]
				&J_{\xi}^{n+1}=J_{\xi}^n
				+\frac{\tau}{2}\left(
				\mathrm{e}^{\tfrac{t_{n+1}}{\lambda^{*}\kappa_3}}\cdot\xi_h^{n+1}
				+\mathrm{e}^{\tfrac{t_n}{\lambda^{*}\kappa_3}}\cdot\xi_h^{n}
				\right),\label{2025-10-20-5}\\
				&J_{\eta}^{n+1}=J_{\eta}^n
				+\frac{\tau}{2}\left(
				\mathrm{e}^{\tfrac{t_{n+1}}{\lambda^{*}\kappa_3}}\cdot\eta_h^{n+1}
				+\mathrm{e}^{\tfrac{t_n}{\lambda^{*}\kappa_3}}\cdot\eta_h^{n}
				\right),\label{2025-10-20-6}\\
				&J_\xi^0 = 0,\quad J_\eta^0 = 0.\nonumber
			\end{align}
\end{algorithm}

\subsection{Stability Analysis of the Scheme}

The main purpose of this subsection is to demonstrate that Algorithm \ref{algorithm-2025-10-15} is stable through energy estimates. Energy estimates transform the stability analysis of numerical schemes into an analysis of energy functions, and use tools such as the discrete Gronwall's inequality to prove the boundedness of energy, thereby ensuring the boundedness of numerical solutions within finite time, that is, stability.

\begin{theorem}[Discrete Energy Estimate]\label{2025-11-3-1}
	Suppose $\left\{(\mathbf{u}_h^{n+1},\xi_h^{n+1},\eta_h^{n+1})\right\}_{n\geq0}$ is the numerical solution of Algorithm $\mathrm{\ref{algorithm-2025-10-15}}$ . Then for any $l\geq 0$, the following inequalities hold
	\begin{align*}
		\mu\lambda^*\kappa_3\|\varepsilon(\mathbf{u}_h^{l+1})\|_0^2	
		+\kappa_1\|\xi_h^{l+1}\|_0^2
		+\kappa_2\|\eta_h^{l+1}\|_0^2
		&\leq C_1N_0^2+C_2M_0^2,\\
		\frac{\tau}{\mu_f} \sum_{n=0}^{l}( K \nabla p_h^{n+\frac{1}{2}}, \nabla p_h^{n+\frac{1}{2}})
		&\leq C_1N_0^2+C_2M_0^2,
	\end{align*}
	where the constant $C$ depends on $\Omega, T$ and the parameters $\mu_f,K,\kappa_1,\kappa_2,\kappa_3$, and
	\begin{align*}
		N_0^2&=
		\|\varepsilon(\mathbf{u}_h^{1})\|_0^2 
		+\|\varepsilon(\mathbf{u}_h^0)\|_0^2
		+\|\Div \mathbf{u}_0\|_0^2
		+\|\xi_h^{1}\|_0^2 +\|\xi_h^0\|_0^2
		+\|\eta_h^1\|_0^2+\|\eta_h^0\|_0^2
		+\| \rho_f \mathbf{g}\|_0^2,\\
		M_0^2&=\|\mathbf{f}\|^2_{L^{2}(0,T,L^2(\Omega))}
		+\|\mathbf{f}_1\|^2_{L^{2}(0,T,L^2(\partial\Omega))}
		+\|\phi\|^2_{L^{2}(0,T,L^2(\Omega))}
		+\|\phi_1\|^2_{L^{2}(0,T,L^2(\partial\Omega))}.
	\end{align*}
\end{theorem}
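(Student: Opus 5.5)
Our plan is to mimic the continuous energy argument of Theorem \ref{lemma-2025-4-9} at the discrete level, using Crank--Nicolson-type test functions. The nonlocal terms $J_\xi^n,J_\eta^n$ are first eliminated by differencing.

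\textbf{Step 1: eliminate the memory terms.} Write \eqref{2025-10-20-2} at levels $n$ and $n+1$. Multiply the level-$n$ equation by $\mathrm{e}^{-\tau/(\lambda^*\kappa_3)}$ and subtract it from the level-$(n+1)$ equation. Because of the updates \eqref{2025-10-20-5}--\eqref{2025-10-20-6}, the combination $8\mathrm{e}^{-t_{n+3/2}/(\lambda^*\kappa_3)}(\kappa_1J_\eta^{n+1}-\kappa_3J_\xi^{n+1})$ minus the rescaled $J^n$ term reduces to local trapezoidal contributions at levels $n,n+1$. The $\Div\mathbf{u}_0$ term cancels exactly. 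The result is a local three-level identity: a discrete analogue of \eqref{2.31}, i.e.\ of $\lambda^*\kappa_3\Div\mathbf{u}_{tt}+\Div\mathbf{u}_t+\kappa_3\xi_t-\kappa_1\eta_t=0$. In it, $\lambda^*\kappa_3$ multiplies a second difference of $\Div\mathbf{u}_h$, and the factors $\mathrm{e}^{-\tau/(\lambda^*\kappa_3)}=1-O(\tau)$ produce $O(\tau)$ perturbation terms in $\xi_h,\eta_h$. This explains why $N_0$ contains both level $0$ and level $1$ quantities: the differenced equation starts at $n=1$, and the first step is absorbed into the data.

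\textbf{Step 2: choose the test functions.}
\begin{itemize}
\item In \eqref{2025-10-20-1}, take $\mathbf{v}_h=\mathbf{u}_h^{n+1}-\mathbf{u}_h^n$. This yields $\mu(\|\varepsilon(\mathbf{u}_h^{n+1})\|_0^2-\|\varepsilon(\mathbf{u}_h^n)\|_0^2)$ minus $(\xi_h^{n+1}+\xi_h^n,\Div(\mathbf{u}_h^{n+1}-\mathbf{u}_h^n))$.
\item In the differenced equation from Step 1, take $\varphi_h$ proportional to $\xi_h^{n+1}+\xi_h^n$ so as to cancel the pressure--divergence coupling.
\item In \eqref{2025-10-20-3}, take $\psi_h=\tau p_h^{n+1/2}$, where $p_h^{n+1/2}=\kappa_1\xi_h^{n+1/2}+\kappa_2\eta_h^{n+1/2}+\lambda^*\kappa_1 d_t\Div\mathbf{u}_h^{n+1}$. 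This produces $\kappa_2(\|\eta_h^{n+1}\|_0^2-\|\eta_h^n\|_0^2)/2$, the dissipation $\frac{\tau}{\mu_f}(K\nabla p_h^{n+1/2},\nabla p_h^{n+1/2})$, and the cross terms $\kappa_1(\eta_h^{n+1}-\eta_h^n,\xi_h^{n+1/2})$ and $\lambda^*\kappa_1(\eta_h^{n+1}-\eta_h^n,d_t\Div\mathbf{u}_h^{n+1})$.
\end{itemize}
After summing, the $\kappa_1(\eta_t,\xi)$ terms cancel against the corresponding term from Step 1, as they do in the continuous proof.

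\textbf{Step 3: sum in time and bound the remainders.} Sum over $n=1,\dots,l$. The remaining cross terms are handled by discrete analogues of the manipulations in the proof of Theorem \ref{lemma-2025-4-9}:
\begin{itemize}
\item The $\lambda^*\kappa_1(\delta\eta,d_t\Div\mathbf{u})$ term is treated by summation by parts, replacing the integration by parts $\int(\eta_t,\Div\mathbf{u}_t)+\int(\eta,\Div\mathbf{u}_{tt})$.
\item The $\lambda^*\Div\mathbf{u}_{tt}$ terms are rewritten through the discrete version of \eqref{chapt4-2.6}, telescoped, and bounded by $\|\varepsilon(\cdot)\|_0$ via $\|\Div\mathbf{v}\|_0\le\sqrt d\|\varepsilon(\mathbf{v})\|_0$. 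This is the source of the weight $\mu\lambda^*\kappa_3$ on $\|\varepsilon(\mathbf{u}_h^{l+1})\|_0^2$ in the statement.
\item The source terms are bounded by Cauchy--Schwarz and Young. Boundary traces of $\mathbf{u}_h$ are controlled by the trace theorem plus Korn's inequality on $\mathbf{V}_h$. Traces of $p_h$ are controlled via the trace theorem together with a Poincaré-type bound, absorbing half the dissipation $\frac{\tau}{2\mu_f}\sum(K\nabla p_h^{n+1/2},\nabla p_h^{n+1/2})$ into the left side.
\item The gravity term is bounded as in the continuous proof and contributes $\|\rho_f\mathbf{g}\|_0^2$.
\end{itemize}
We then aim for an inequality of the form $E^{l+1}\le C(N_0^2+M_0^2)+C\tau\sum_{n\le l}E^{n+1}$, where $E^{n}$ is the discrete energy. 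The discrete Gronwall inequality, with $\tau$ small, then gives both stated bounds.

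\textbf{Main obstacle.} The hardest step is Step 1 and its interaction with the energy test: making the nonlocal memory terms telescope cleanly, and checking that the $O(\tau)$ defects from $\mathrm{e}^{-\tau/(\lambda^*\kappa_3)}$ and the half-step trapezoidal weights carry a factor $\tau$ so they can go into the Gronwall sum. If the exact three-level identity is too messy, I would instead test \eqref{2025-10-20-2} directly with $\varphi_h=\xi_h^{n+1}+\xi_h^n$. I would then bound $|J_\xi^n|,|J_\eta^n|$ crudely by $C\tau\sum_{m\le n}(\|\xi_h^m\|_0+\|\eta_h^m\|_0)$, using that $\mathrm{e}^{(t_m-t_{n+1/2})/(\lambda^*\kappa_3)}\le1$, and feed that history sum into the Gronwall term.
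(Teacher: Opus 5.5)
Your Step~1 manipulation is correct, and it is a cleaner device than the paper's. The paper applies a plain $d_t$ to \eqref{2025-10-20-2} and then has to carry the nonlocal remainder $R_2$. Multiplying the level-$n$ equation by $\mathrm{e}^{-\tau/c}$, $c:=\lambda^*\kappa_3$, and subtracting it from the level-$(n+1)$ equation instead removes $J_\xi,J_\eta$ and the $\Div\mathbf{u}_0$ term exactly. But you have misidentified what it produces. With $D^n:=\Div(\mathbf{u}_h^{n+1}+\mathbf{u}_h^n)$ and $G^n:=\kappa_1\eta_h^n-\kappa_3\xi_h^n$ one gets, weakly in $M_h$,
\begin{align*}
4c\,\Div(\mathbf{u}_h^{n+2}-\mathbf{u}_h^{n})+4c\bigl(1-\mathrm{e}^{-\tau/c}\bigr)D^n
&=\tau G^{n+2}+\tau\bigl(1-\mathrm{e}^{-\tau/c}+6\mathrm{e}^{-\tau/(2c)}\bigr)G^{n+1}\\
&\quad+\tau\bigl(2\mathrm{e}^{-3\tau/(2c)}-\mathrm{e}^{-\tau/c}\bigr)G^{n}.
\end{align*}
Dividing by $8\tau$ shows that this is the discrete form of \eqref{chapt4-2.6}, i.e.\ of $c\,\Div\mathbf{u}_t+\Div\mathbf{u}=\kappa_1\eta-\kappa_3\xi$, and not of \eqref{2.31}.
\begin{itemize}
\item $c$ multiplies only a first (two-step) difference of $\Div\mathbf{u}_h$, not a second difference.
\item The term $4c(1-\mathrm{e}^{-\tau/c})D^n\approx 4\tau D^n$ is the $\Div\mathbf{u}$ term of that relation. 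It is of the same order as everything else, not an $O(\tau)$ defect.
\item $\xi_h,\eta_h$ enter undifferenced, with total weight $\approx 8\tau$.
\end{itemize}
The identity is three-level in $\mathbf{u}_h$ only because each Crank--Nicolson equation is already two-level.

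Consequently your Step~2 fails. Testing this identity with $\xi_h^{n+1}+\xi_h^n$ produces $\tau\kappa_1(\eta_h,\xi_h)$- and $(\Div\mathbf{u}_h,\xi_h)$-type terms. These cannot cancel the $O(1)$ cross term $\frac{\kappa_1}{2}(\eta_h^{n+1}-\eta_h^n,\xi_h^{n+1}+\xi_h^n)$ that comes from \eqref{2025-10-20-3} tested with $\tau p_h^{n+\frac12}$. The divergence stencil $\Div(\mathbf{u}_h^{n+2}-\mathbf{u}_h^n)$ also does not match the coupling $-(\xi_h^{n+1}+\xi_h^n,\Div(\mathbf{u}_h^{n+1}-\mathbf{u}_h^n))$ from your momentum test. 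After the shift $n\to n-1$ it becomes $\Div(\mathbf{u}_h^{n+1}-\mathbf{u}_h^{n-1})=\tau\,d_t\Div(\mathbf{u}_h^{n+1}+\mathbf{u}_h^n)$, which is the stencil matched by the paper's test function $d_t(\mathbf{u}_h^{n+1}+\mathbf{u}_h^n)$ in \eqref{2025-10-20-1}. Your fallback fails in the same way, only worse. The undifferenced \eqref{2025-10-20-2} contains no time difference of $\Div\mathbf{u}_h$ and weights $\xi_h$ only by $\kappa_3\tau$. So neither coupling is cancelled, and $\xi_h$ is too weakly controlled to absorb them.

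To obtain a discrete \eqref{2.31} you must difference once more, and then align the test functions with the resulting multi-level stencil. The paper's proof is organized around exactly this bookkeeping: it works throughout with the sums $\mathbf{u}_h^{n+1}+\mathbf{u}_h^n$, $\xi_h^{n+1}+\xi_h^n$, $\eta_h^{n+1}+\eta_h^n$; it adds \eqref{2025-10-20-3} at two consecutive levels; and it uses the second-difference relations \eqref{2026-3-22-5} and \eqref{2025-10-30-7}. Your proposal contains none of this.

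There is a further problem in Step~3. Copying the continuous manipulations of Theorem~\ref{lemma-2025-4-9} forces you to bound $\|\Div\mathbf{u}_t(t)\|_0$ pointwise by $\|\varepsilon(\mathbf{u}_t(t))\|_0$, a quantity that is not part of the energy. The pairing to aim for is \eqref{2.31} tested with $\xi+\lambda^*\Div\mathbf{u}_t$ rather than with $\xi$. Since $c\,\Div\mathbf{u}_{tt}+\kappa_3\xi_t=\kappa_3(\xi+\lambda^*\Div\mathbf{u}_t)_t$ and $p=\kappa_1(\xi+\lambda^*\Div\mathbf{u}_t)+\kappa_2\eta$, all cross terms then cancel exactly. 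This yields the energy $\frac{\mu}{2}\|\varepsilon(\mathbf{u})\|_0^2+\frac{\kappa_3}{2}\|\xi+\lambda^*\Div\mathbf{u}_t\|_0^2+\frac{\kappa_2}{2}\|\eta\|_0^2$ with dissipation $\lambda^*\|\Div\mathbf{u}_t\|_0^2+\frac{1}{\mu_f}(K\nabla p,\nabla p)$. A discrete version of this identity, built on the twice-differenced relation, is what your Step~2 would have to reproduce.
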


\begin{proof}
	In \eqref{2025-10-20-1}, taking $\mathbf{v}_h=d_t(\mathbf{u}_h^{n+1} + \mathbf{u}_h^n)$, we obtain
	\begin{align}\label{2025-10-20-7}
		\mu\left(\varepsilon(\mathbf{u}_h^{n+1} + \mathbf{u}_h^n), d_t \varepsilon(\mathbf{u}_h^{n+1} + \mathbf{u}_h^n)\right) 
		-\left(\xi_h^{n+1} + \xi_h^n, d_t\Div(\mathbf{u}_h^{n+1} + \mathbf{u}_h^n)\right)=R_1,
	\end{align}
	where
	\begin{align*}
		R_1= \left(\mathbf{f}^{n+1} + \mathbf{f}^n, d_t(\mathbf{u}_h^{n+1} + \mathbf{u}_h^n)\right) 
		+ \left\langle \mathbf{f}_1^{n+1} + \mathbf{f}_1^n, d_t(\mathbf{u}_h^{n+1} + \mathbf{u}_h^n) \right\rangle.
	\end{align*}
	In \eqref{2025-10-20-2}, first applying the $d_t$ operator to both sides of the equation, then taking $\varphi=\xi_h^{n+1} + \xi_h^n$, we get
	\begin{align}\label{2025-10-20-8}
		&4\lambda^*\kappa_3\left( d_t\Div( \mathbf{u}_h^{n+1} + \mathbf{u}_h^n), \xi_h^{n+1} + \xi_h^n \right)
		+\kappa_3\tau \left(d_t(\xi_h^{n+1} + \xi_h^n), \xi_h^{n+1} + \xi_h^n \right)\nonumber\\
		&=\kappa_1\tau \left(d_t(\eta_h^{n+1} + \eta_h^n), \xi_h^{n+1} + \xi_h^n \right)	
		+2\tau \mathrm{e}^{-\tfrac{\tau}{2\lambda^* \kappa_3}}(\kappa_1d_t\eta_h^n-\kappa_3d_t\xi_h^n, \xi_h^{n+1} + \xi_h^n) 
		+R_2,
	\end{align}
where
\begin{align*}
	R_2&=\frac{8}{\tau} \cdot \mathrm{e}^{-\tfrac{t_{n+\frac{1}{2}}}{\lambda^* \kappa_3}}\left(\kappa_1J_\eta^n-\kappa_3J_\xi^n,\xi_h^{n+1} + \xi_h^n
	\right)
	-\frac{8}{\tau} \cdot \mathrm{e}^{-\tfrac{t_{n-\frac{1}{2}}}{\lambda^* \kappa_3}}\left(\kappa_1J_\eta^{n-1}-\kappa_3J_\xi^{n-1},\xi_h^{n+1} + \xi_h^n
	\right)\nonumber\\
	&\quad+ \frac{8\lambda^*\kappa_3}{\tau}\mathrm{e}^{-\tfrac{t_{n+\frac{1}{2}}}{\lambda^* \kappa_3}} (\Div \mathbf{u}_0, \xi_h^{n+1} + \xi_h^n)
	-\frac{8\lambda^*\kappa_3}{\tau}\mathrm{e}^{-\tfrac{t_{n-\frac{1}{2}}}{\lambda^* \kappa_3}} (\Div \mathbf{u}_0, \xi_h^{n+1} + \xi_h^n).
\end{align*}
In \eqref{2025-10-20-3}, letting $\psi=p_h^{n+\frac{1}{2}}=\kappa_1 \frac{\xi_h^{n+1} + \xi_h^n}{2} + \kappa_2 \frac{\eta_h^{n+1} + \eta_h^n}{2} + \lambda^* \kappa_1 \frac{\Div\mathbf{u}_h^{n+1} - \Div\mathbf{u}_h^n}{\tau}$, we obtain
\begin{align}\label{2025-10-20-9}
	&\kappa_2(d_t\eta_h^{n+1},\eta_h^{n+1} + \eta_h^n)
	+\kappa_1(d_t\eta_h^{n+1},\xi_h^{n+1} + \xi_h^n)
	+\frac{2}{\mu_f} ( K \nabla p_h^{n+\frac{1}{2}}, \nabla p_h^{n+\frac{1}{2}} )\nonumber\\
	&\quad+\frac{2\lambda^* \kappa_1}{\tau}(d_t\eta_h^{n+1},\Div(\mathbf{u}_h^{n+1} - \mathbf{u}_h^n))
	= R_{31},
\end{align}
To eliminate the cross terms, reducing the time level of equation \eqref{2025-10-20-3} to level $n$, and still letting $\psi=p_h^{n+\frac{1}{2}}$, we get
\begin{align}\label{2026-1-8-2}
	&\kappa_2(d_t\eta_h^{n},\eta_h^{n+1} + \eta_h^n)
	+\kappa_1(d_t\eta_h^{n},\xi_h^{n+1} + \xi_h^n)
	+\frac{2}{\mu_f} ( K \nabla p_h^{n-\frac{1}{2}}, \nabla p_h^{n+\frac{1}{2}} )\nonumber\\
	&\quad+\frac{2\lambda^* \kappa_1}{\tau}(d_t\eta_h^{n},\Div(\mathbf{u}_h^{n+1} - \mathbf{u}_h^n))
	= R_{32},
\end{align}
where
\begin{align*}
	R_{31}&=\frac{2}{\mu_f} ( K \rho_f \mathbf{g} , \nabla p_h^{n+\frac{1}{2}} )
	+(\phi^{n+1} + \phi^n, p_h^{n+\frac{1}{2}}) + \langle \phi_1^{n+1} + \phi_1^n, p_h^{n+\frac{1}{2}} \rangle,\\
	R_{32}&=\frac{2}{\mu_f} ( K \rho_f \mathbf{g} , \nabla p_h^{n+\frac{1}{2}} )
	+(\phi^{n} + \phi^{n-1}, p_h^{n+\frac{1}{2}}) + \langle \phi_1^{n} + \phi_1^{n-1}, p_h^{n+\frac{1}{2}} \rangle.
\end{align*}
Multiplying \eqref{2025-10-20-7} by $4\lambda^*\kappa_3$ on both sides, and adding \eqref{2025-10-20-8}-\eqref{2026-1-8-2}, we obtain
\begin{align}\label{2025-10-30-2}
	&4\lambda^*\kappa_3\left(\varepsilon(\mathbf{u}_h^{n+1} + \mathbf{u}_h^n), d_t \varepsilon(\mathbf{u}_h^{n+1} + \mathbf{u}_h^n)\right) 
	+\kappa_3\tau \left( d_t(\xi_h^{n+1} + \xi_h^n), \xi_h^{n+1}+ \xi_h^n \right)
	\nonumber\\
	&\quad+\kappa_2\left( d_t(\eta_h^{n+1}+\eta_h^{n}),   \eta_h^{n+1} + \eta_h^n\right)
	+2 \tau\mathrm{e}^{-\tfrac{\tau}{2\lambda^* \kappa_3}}(\kappa_3d_t\xi_h^n-\kappa_1d_t\eta_h^n, \xi_h^{n+1} + \xi_h^n)
	\nonumber\\
	&\quad
	+\frac{2}{\mu_f} \left( K (\nabla p_h^{n+\frac{1}{2}}+\nabla p_h^{n-\frac{1}{2}}), \nabla p_h^{n+\frac{1}{2}} \right)
	+\frac{2\lambda^* \kappa_1}{\tau}\left( d_t(\eta_h^{n+1}+\eta_h^{n}),   \Div(\mathbf{u}_h^{n+1} - \mathbf{u}_h^n) \right)
	\nonumber\\	
	&\quad+\kappa_1\left( d_t(\eta_h^{n+1}+\eta_h^{n}),   \xi_h^{n+1} + \xi_h^n\right)
	-\kappa_1\tau\left( d_t(\eta_h^{n+1}+\eta_h^{n}),   \xi_h^{n+1} + \xi_h^n\right)\nonumber\\
	&=4\lambda^*\kappa_3R_1+R_2+ R_{31}+ R_{32},
\end{align}
Using the identity
\begin{align}\label{2025-10-30-9}
	(d_t\xi_h^{n+1},\xi_h^{n+1})=\frac{1}{2}d_t\|\xi_h^{n+1}\|_0^2+\frac{\tau}{2}\|d_t\xi_h^{n+1}\|_0^2,
\end{align}
simplify \eqref{2025-10-30-2} to obtain
\begin{align}\label{2025-10-30-3}
	&2\mu\lambda^*\kappa_3d_t\|\varepsilon\left(\mathbf{u}_h^{n+1} + \mathbf{u}_h^n\right)\|_0^2
	+2\mu\lambda^*\kappa_3\tau\|d_t\varepsilon\left(\mathbf{u}_h^{n+1} + \mathbf{u}_h^n\right)\|_0^2
	+\frac{\kappa_3\tau^2}{2}\|d_t(\xi_h^{n+1} + \xi_h^n)\|_0^2\nonumber\\
	&\quad+\frac{\kappa_3\tau}{2}d_t\|\xi_h^{n+1}+ \xi_h^n\|_0^2
	+\frac{\kappa_2}{2}d_t\|\eta_h^{n+1}+ \eta_h^n\|_0^2
	+\frac{\kappa_2\tau}{2}\|d_t(\eta_h^{n+1} + \eta_h^n)\|_0^2\nonumber\\
	&\quad+\frac{2}{\mu_f} \left( K \nabla (p_h^{n+\frac{1}{2}}+p_h^{n-\frac{1}{2}}), \nabla p_h^{n+\frac{1}{2}} \right)
	+\frac{2\lambda^* \kappa_1}{\tau}\left( d_t(\eta_h^{n+1}+\eta_h^{n}),   \Div(\mathbf{u}_h^{n+1} - \mathbf{u}_h^n) \right)
	\nonumber\\
	&\quad+\kappa_1\left( d_t(\eta_h^{n+1}+\eta_h^{n}),   \xi_h^{n+1} + \xi_h^n\right)
	-\kappa_1\tau\left( d_t(\eta_h^{n+1}+\eta_h^{n}),   \xi_h^{n+1} + \xi_h^n\right)\nonumber\\
	&=2\tau\mathrm{e}^{-\tfrac{\tau}{2\lambda^* \kappa_3}}(\kappa_1d_t\eta_h^n-\kappa_3d_t\xi_h^n, \xi_h^{n+1} + \xi_h^n)
	+4\lambda^*\kappa_3R_1+R_2+ R_{31}+ R_{32}.
\end{align}
Applying the operator $\tau\sum_{n=0}^{l}$ to both sides of \eqref{2025-10-30-3}, and letting $\eta_h^{-1}=\xi_h^{-1}=0$, we get
\begin{align}\label{2025-10-30-4}
	&2\mu\lambda^*\kappa_3\|\varepsilon(\mathbf{u}_h^{l+1} + \mathbf{u}_h^l)\|_0^2
	+2\mu\lambda^*\kappa_3\tau^2\sum_{n=0}^{l}\|d_t\varepsilon(\mathbf{u}_h^{n+1} + \mathbf{u}_h^n)\|_0^2
	+\frac{2\tau}{\mu_f} \sum_{n=0}^{l}( K \nabla p_h^{n+\frac{1}{2}}, \nabla p_h^{n+\frac{1}{2}} )\nonumber\\
	&\quad+\frac{\kappa_3\tau}{2}\|\xi_h^{l+1}+ \xi_h^l\|_0^2
	+\frac{\kappa_3\tau^3}{2}\sum_{n=0}^{l}\|d_t(\xi_h^{n+1} + \xi_h^n)\|_0^2
	+\kappa_1\tau\sum_{n=0}^{1}( d_t(\eta_h^{n+1}+\eta_h^{n}),   \xi_h^{n+1} + \xi_h^n)\nonumber\\
	&\quad
	+\frac{\kappa_2}{2}\|\eta_h^{l+1}+ \eta_h^l\|_0^2
	+\frac{\kappa_2\tau^2}{2}\sum_{n=0}^{l}\|d_t(\eta_h^{n+1} + \eta_h^n)\|_0^2
	-\kappa_1\tau^2\sum_{n=0}^{l}( d_t(\eta_h^{n+1}+\eta_h^{n}),   \xi_h^{n+1} + \xi_h^n)\nonumber\\
	&=2\mu\lambda^*\kappa_3\|\varepsilon(\mathbf{u}_h^{1} + \mathbf{u}_h^0)\|_0^2
	+\frac{\kappa_3\tau}{2}\|\xi_h^{1}+ \xi_h^0\|_0^2
	+\frac{\kappa_2}{2}\|\eta_h^{1}+ \eta_h^0\|_0^2
	-\frac{2\tau}{\mu_f} \sum_{n=0}^{l}( K \nabla p_h^{n-\frac{1}{2}}, \nabla p_h^{n+\frac{1}{2}} )\nonumber\\
	&\quad-2\lambda^* \kappa_1\tau\sum_{n=0}^{l}\left( d_t(\eta_h^{n+1}+\eta_h^{n}), d_t\Div\mathbf{u}_h^{n+1} \right)
	+\tau\sum_{n=0}^{l}(4\lambda^*\kappa_3R_1+R_2+ R_{31}+R_{32})\nonumber\\
	&\quad+2\tau^2\mathrm{e}^{-\tfrac{\tau}{2\lambda^* \kappa_3}}\sum_{n=0}^{l}(\kappa_1d_t\eta_h^n-\kappa_3d_t\xi_h^n, \xi_h^{n+1} + \xi_h^n).
\end{align}
Next, we estimate the terms in \eqref{2025-10-30-4}. Since $K$ is symmetric positive definite, using the Cauchy-Schwarz inequality, we have
\begin{align}\label{2026-1-8-3}
	\frac{2\tau}{\mu_f} \sum_{n=0}^{l}( K \nabla p_h^{n-\frac{1}{2}}, \nabla p_h^{n+\frac{1}{2}} )
	&\leq
	\frac{2\tau}{\mu_f} \sum_{n=0}^{l}
	\sqrt{(K \nabla p_h^{n-\frac{1}{2}},\nabla p_h^{n-\frac{1}{2}})}
	\sqrt{(K \nabla p_h^{n+\frac{1}{2}},\nabla p_h^{n+\frac{1}{2}})}\nonumber\\
	&\leq
	\frac{\tau}{\mu_f} \sum_{n=0}^{l}
	(K \nabla p_h^{n-\frac{1}{2}},\nabla p_h^{n-\frac{1}{2}})
	+\frac{\tau}{\mu_f} \sum_{n=0}^{l}
	(K \nabla p_h^{n+\frac{1}{2}},\nabla p_h^{n+\frac{1}{2}})\nonumber\\
	&\leq\frac{2\tau}{\mu_f} \sum_{n=0}^{l}
	(K \nabla p_h^{n+\frac{1}{2}},\nabla p_h^{n+\frac{1}{2}}).
\end{align}
According to \eqref{2025-10-20-1} and \eqref{2025-10-20-4}, we have
\begin{align}\label{2026-3-22-5}
	&\kappa_1\tau\sum_{n=0}^{l}(d_t(\eta_h^{n+1}+\eta_h^{n}),\xi_h^{n+1}+\xi_h^{n})
	=\kappa_3\tau\sum_{n=0}^{l}
	(d_t(\xi_h^{n+1}+\xi_h^{n}),\xi_h^{n+1}+\xi_h^{n})
	\nonumber\\
	&\quad+
	\tau\sum_{n=0}^{l}(
	d_t\Div(\mathbf{u}_h^{n+1}+\mathbf{u}_h^{n}),\xi_h^{n+1}+\xi_h^{n})
	+\lambda^*\kappa_3\tau\sum_{n=0}^{l}
	(d_t^2\Div(\mathbf{u}_h^{n+1}+\mathbf{u}_h^{n}),\xi_h^{n+1}+\xi_h^{n})
	\nonumber\\
	&=\kappa_3\tau\sum_{n=0}^{l}
	(d_t(\xi_h^{n+1}+\xi_h^{n}),\xi_h^{n+1}+\xi_h^{n})
	+\frac{\mu}{2}\|\varepsilon(\mathbf{u}_{h}^{n+1}+\mathbf{u}_{h}^{n})\|_0^2
	+\frac{\mu\tau^2}{2}\sum_{n=0}^{l}\|d_t\varepsilon(\mathbf{u}_{h}^{n+1}+\mathbf{u}_{h}^{n})\|_0^2
	\nonumber\\	
	&\quad+\frac{\mu\lambda^*\kappa_3\tau^3}{2}\sum_{n=0}^{l}\|d_t^2\varepsilon(\mathbf{u}_{h}^{n+1}+\mathbf{u}_{h}^{n})\|_0^2
	-\frac{\mu\lambda^*\kappa_3\tau^2}{2}\sum_{n=0}^{l}\|d_t\varepsilon(\mathbf{u}_{h}^{n+1}+\mathbf{u}_{h}^{n})\|_0^2\nonumber\\
	&\quad
	-\tau\sum_{n=0}^{l}(\mathbf{f},d_t(\mathbf{u}_h^{n+1}+\mathbf{u}_h^{n}))
	-\tau\sum_{n=0}^{l}\langle \mathbf{f}_1,d_t(\mathbf{u}_h^{n+1}+\mathbf{u}_h^{n})\rangle.
\end{align}
Similarly, according to \eqref{2025-10-20-1} and \eqref{2025-10-20-4}, we have
\begin{align}\label{2025-10-30-7}
	&\lambda^*\kappa_1\tau\sum_{n=0}^{l}(d_t
	(\eta_h^{n+1}+\eta_h^{n}),d_t\Div\mathbf{u}_h^{n+1})
	=(\lambda^*)^2\kappa_3\|\Div\mathbf{u}_h^{n+1}\|_0^2
	+2\lambda^*\tau\sum_{n=0}^{l}\|d_t\Div\mathbf{u}_h^{n+1}\|_0^2\nonumber\\
	&
	+(\lambda^*)^2\kappa_3\tau^2\sum_{n=0}^{l}\|d_t^2\Div\mathbf{u}_h^{n+1}\|_0^2
	+2\mu\lambda^*\kappa_1\tau\sum_{n=0}^{l}	\|d_t\varepsilon(\mathbf{u}_{h}^{n+1})\|_0^2
	-(\lambda^*)^2\kappa_3\|\Div\mathbf{u}_h^{0}\|_0^2.
\end{align}
Using Young's inequality and $\mathrm{e}^{-\tfrac{\tau}{2\lambda^* \kappa_3}}\leq1$, we obtain
\begin{align}\label{2025-10-30-10}
	&\tau^2 \mathrm{e}^{-\tfrac{\tau}{2\lambda^* \kappa_3}}\sum_{n=0}^{l}(\kappa_1d_t\eta_h^n-\kappa_3d_t\xi_h^n, \xi_h^{n+1} + \xi_h^n)\nonumber\\
	&\leq\kappa_1\tau^2\sum_{n=0}^{l}(d_t(\eta_h^n+\eta_h^{n-1}), \xi_h^{n+1} + \xi_h^n)
	+\kappa_3\tau^2\sum_{n=0}^{l}(d_t(\xi_h^{n}+\xi_h^{n-1}), \xi_h^{n+1} + \xi_h^n)\nonumber\\
	&\leq\frac{\kappa_2\tau^2}{4}\sum_{n=0}^{l}\|d_t(\eta_h^n+\eta_h^{n-1})\|_0^2
	+\frac{\kappa_3\tau^2}{4}\sum_{n=0}^{l}\|d_t(\xi_h^{n}+\xi_h^{n-1})\|_0^2
	+C\tau\sum_{n=0}^{l}\|\xi_h^{n+1} + \xi_h^n\|_0^2.
\end{align}
Using Young's inequality and the trace theorem to handle the source terms and boundary terms, we have
\begin{align}\label{2025-10-30-11}
	4\lambda^*\kappa_3\tau\sum_{n=0}^{l}R_1
	&\leq
	\frac{\lambda^*\kappa_3}{8}\|\varepsilon(\mathbf{u}_h^{n+1} + \mathbf{u}_h^n)\|_0^2
	+C(\|\mathbf{f}^{n+1} + \mathbf{f}^n\|_0^2+
	\|\varepsilon(\mathbf{u}_h^{1} + \mathbf{u}_h^0)\|_0^2)\nonumber\\
	&\quad+C(\|\mathbf{f}_1^{n+1} + \mathbf{f}_1^n\|_{L^2(\partial\Omega)}^2+
	\|\varepsilon(\mathbf{u}_h^{1} + \mathbf{u}_h^0)\|_{L^2(\partial\Omega)}^2),\\
	\tau\sum_{n=0}^{l}(R_{31}+R_{32})
	&\leq 
	\frac{K_1\tau}{4\mu_f}\sum_{n=0}^{l}
	(\|\nabla p_h^{n+\frac{1}{2}}\|_0^2
	+\|p_h^{n+\frac{1}{2}}\|_0^2)
	+C\tau\sum_{n=0}^{l}
	(\|\phi^{n+1}+\phi^{n}\|_0^2)\nonumber\\
	&\quad+C\tau\sum_{n=0}^{l}
	\|\phi_1^{n+1}+\phi_1^{n}\|_{L^2(\partial\Omega)}^2
	+C\tau\sum_{n=0}^{l}
	\| \rho_f \mathbf{g}\|_0^2,
\end{align}
According to Poincar\'e's inequality, it is known that $\|p_h^{n+\frac{1}{2}}\|_0^2\leq \|\nabla p_h^{n+\frac{1}{2}}\|_0^2$.
According to the definition of $R_2$, we have
\begin{align}\label{2025-10-30-14}
	\tau\sum_{n=0}^{l}R_2
	&=8\kappa_1\sum_{n=0}^{l} \mathrm{e}^{-\tfrac{t_{n+\frac{1}{2}}}{\lambda^* \kappa_3}}\left(J_\eta^n,\xi_h^{n+1} + \xi_h^n
	\right)
	-8\kappa_3\sum_{n=0}^{l} \mathrm{e}^{-\tfrac{t_{n+\frac{1}{2}}}{\lambda^* \kappa_3}}\left(J_\xi^n,\xi_h^{n+1} + \xi_h^n
	\right)
	\nonumber\\
	&\quad-8\kappa_1\sum_{n=0}^{l} \mathrm{e}^{-\tfrac{t_{n-\frac{1}{2}}}{\lambda^* \kappa_3}}\left(J_\eta^{n-1},\xi_h^{n+1} + \xi_h^n
	\right)
	+8\kappa_3\sum_{n=0}^{l} \mathrm{e}^{-\tfrac{t_{n-\frac{1}{2}}}{\lambda^* \kappa_3}}\left(J_\xi^{n-1},\xi_h^{n+1} + \xi_h^n
	\right)\nonumber\\
	&\quad+8\lambda^*\kappa_3\sum_{n=0}^{l}\left(\mathrm{e}^{-\tfrac{t_{n+\frac{1}{2}}}{\lambda^*\kappa_3}}-\mathrm{e}^{-\tfrac{t_{n-\frac{1}{2}}}{\lambda^* \kappa_3}}\right) (\Div \mathbf{u}_0, \xi_h^{n+1} + \xi_h^n),
\end{align}
According to the recurrence formula of $J_{\eta}^{n}$, we have
\begin{align*}
	J_{\eta}^{n}-J_{\eta}^{n-1}=
	\frac{\tau}{2}\left(
	\mathrm{e}^{\tfrac{t_{n}}{\lambda^{*}\kappa_3}}\cdot\eta_h^{n}
	+\mathrm{e}^{\tfrac{t_{n-1}}{\lambda^{*}\kappa_3}}\cdot\eta_h^{n-1}
	\right),
\end{align*}
Given the initial value $J_{\eta}^{0}=0$, we have
\begin{align*}
	J_{\eta}^{n}&=\frac{\tau}{2}\left(
	\mathrm{e}^{\tfrac{t_{n}}{\lambda^{*}\kappa_3}}\cdot\eta_h^{n}
	+2\sum_{i=1}^{n-1}\mathrm{e}^{\tfrac{t_{i}}{\lambda^{*}\kappa_3}}\cdot\eta_h^{i}
	+\eta_h^{0}\right),\\
	J_{\eta}^{n-1}&=\frac{\tau}{2}\left(
	\mathrm{e}^{\tfrac{t_{n-1}}{\lambda^{*}\kappa_3}}\cdot\eta_h^{n-1}
	+2\sum_{i=1}^{n-1}\mathrm{e}^{\tfrac{t_{i-1}}{\lambda^{*}\kappa_3}}\cdot\eta_h^{i-1}
	+\eta_h^{0}\right),
\end{align*}
and
\begin{align}\label{2025-10-30-15}
	&8\kappa_1\sum_{n=0}^{l} \mathrm{e}^{-\tfrac{t_{n+\frac{1}{2}}}{\lambda^* \kappa_3}}\left(J_\eta^n,\xi_h^{n+1} + \xi_h^n
	\right)
	-8\kappa_1\sum_{n=0}^{l} \mathrm{e}^{-\tfrac{t_{n-\frac{1}{2}}}{\lambda^* \kappa_3}}\left(J_\eta^{n-1},\xi_h^{n+1} + \xi_h^n
	\right)\nonumber\\
	&=4\kappa_1\tau\sum_{n=0}^{l}
	\left(\mathrm{e}^{-\tfrac{\tau}{2\lambda^* \kappa_3}}\eta_h^{n}
	+2\sum_{i=1}^{n-1}\mathrm{e}^{-\tfrac{(n+\frac{1}{2}-i)\tau}{\lambda^{*}\kappa_3}}\cdot\eta_h^{i}
	+\mathrm{e}^{-\tfrac{t_{n+\frac{1}{2}}}{\lambda^* \kappa_3}}\eta_h^0	
	,\xi_h^{n+1} + \xi_h^n\right)\nonumber\\
	&\quad-4\kappa_1\tau\sum_{n=0}^{l}
	\left(\mathrm{e}^{-\tfrac{\tau}{2\lambda^* \kappa_3}}\eta_h^{n-1}
	+2\sum_{i=1}^{n-1}\mathrm{e}^{-\tfrac{(n+\frac{1}{2}-i)\tau}{\lambda^{*}\kappa_3}}\cdot\eta_h^{i-1}
	+\mathrm{e}^{-\tfrac{t_{n-\frac{1}{2}}}{\lambda^* \kappa_3}}\eta_h^0	
	,\xi_h^{n+1} + \xi_h^n\right)\nonumber\\
	&=4\kappa_1\tau^2\sum_{n=0}^{l}
	\left(\mathrm{e}^{-\tfrac{\tau}{2\lambda^* \kappa_3}}d_t\eta_h^{n}
	+2\sum_{i=1}^{n-1}\mathrm{e}^{-\tfrac{(n+\frac{1}{2}-i)\tau}{\lambda^{*}\kappa_3}}d_t\eta_h^{i}
	,\xi_h^{n+1} + \xi_h^n\right)\nonumber\\
	&\quad +4\kappa_1\tau\sum_{n=0}^{l}\left(
	(\mathrm{e}^{-\tfrac{t_{n+\frac{1}{2}}}{\lambda^* \kappa_3}}-\mathrm{e}^{-\tfrac{t_{n-\frac{1}{2}}}{\lambda^* \kappa_3}})\eta_h^0	
	,\xi_h^{n+1} + \xi_h^n\right),
\end{align}
Similarly treating the recurrence formula of $J_{\xi}^{n}$, and using \eqref{2025-10-30-14} and \eqref{2025-10-30-15}, we obtain
\begin{align}\label{2025-10-30-16}
	\tau\sum_{n=0}^{l}R_2
	&\leq12\tau^2\sum_{n=0}^{l}
	\left(\kappa_1d_t\eta_h^{n}-\kappa_3d_t\xi_h^{n}
	,\xi_h^{n+1} + \xi_h^n\right)
	+4\kappa_3\tau\sum_{n=0}^{l}\left(
	\xi_h^0,\xi_h^{n+1} + \xi_h^n\right)
	\nonumber\\
	&\quad
	+8\lambda^*\kappa_3\tau\sum_{n=0}^{l}(\Div \mathbf{u}_0, \xi_h^{n+1} + \xi_h^n)
	+4\kappa_1\tau\sum_{n=0}^{l}\left(
	\eta_h^0,\xi_h^{n+1} + \xi_h^n\right),
\end{align}
The treatment of the first term on the right-hand side of \eqref{2025-10-30-16} is similar to that of \eqref{2025-10-30-10}. For the other terms on the right-hand side of \eqref{2025-10-30-16},
we apply the Cauchy-Schwarz inequality to obtain
\begin{align}\label{2025-10-30-18}
&8\lambda^*\kappa_3\tau\sum_{n=0}^{l}(\Div \mathbf{u}_0, \xi_h^{n+1} + \xi_h^n)
+4\kappa_3\tau\sum_{n=0}^{l}(
\xi_h^0,\xi_h^{n+1} + \xi_h^n)
+4\kappa_1\tau\sum_{n=0}^{l}(
\eta_h^0,\xi_h^{n+1} + \xi_h^n)\nonumber\\
&\leq8\lambda^*\kappa_3\tau\sum_{n=0}^{l}\|\Div \mathbf{u}_0\|_0^2
+4\kappa_3\tau\sum_{n=0}^{l}\|\xi_h^0\|_0^2
+4\kappa_1\tau\sum_{n=0}^{l}\|\eta_h^0\|_0^2
+4\kappa_3\tau\sum_{n=0}^{l}
\|\xi_h^{n+1} + \xi_h^n\|_0^2\nonumber\\
&\leq8\lambda^*\kappa_3\|\Div \mathbf{u}_0\|_0^2
+4\kappa_3\|\xi_h^0\|_0^2
+4\kappa_1\|\eta_h^0\|_0^2
+4\kappa_3\tau\sum_{n=0}^{l}
\|\xi_h^{n+1} + \xi_h^n\|_0^2
\end{align}
Substituting \eqref{2026-1-8-3}-\eqref{2025-10-30-11} and \eqref{2025-10-30-16}-\eqref{2025-10-30-18} into \eqref{2025-10-30-4}, we obtain
\begin{align}\label{2025-10-30-19}
	&2\mu\lambda^*\kappa_3\|\varepsilon(\mathbf{u}_h^{l+1} + \mathbf{u}_h^l)\|_0^2	
	+\frac{\kappa_3}{2}\|\xi_h^{l+1}+ \xi_h^l\|_0^2
	+\frac{\kappa_2}{2}\|\eta_h^{l+1}+\eta_h^l\|_0^2
	+\frac{2\tau}{\mu_f} \sum_{n=0}^{l}( K \nabla p_h^{n+\frac{1}{2}}, \nabla p_h^{n+\frac{1}{2}} )\nonumber\\
	&\leq
	C\lambda^*\kappa_3\tau\sum_{n=0}^{l}
	\|\varepsilon(\mathbf{u}_h^{n+1} + \mathbf{u}_h^n)\|_0^2
	+C\kappa_3\tau\sum_{n=1}^{l}\|\xi_h^{n+1} + \xi_h^n\|_0^2
	+C\kappa_2\tau\sum_{n=0}^{l}\|\eta_h^{n+1}+\eta_h^n\|_0^2
	\nonumber\\
	&\quad+C(
	\|\varepsilon(\mathbf{u}_h^{1})\|_0^2 
	+\|\varepsilon(\mathbf{u}_h^0)\|_0^2
	+\|\Div \mathbf{u}_0\|_0^2
	+\|\xi_h^{1}\|_0^2 +\|\xi_h^0\|_0^2
	+\|\eta_h^0\|_0^2
	+\| \rho_f \mathbf{g}\|_0^2
	)\nonumber\\
	&\quad+C\tau\sum_{n=0}^{l}(\|\mathbf{f}^{n+1}\|_0^2
	+\|\mathbf{f}_1^{n+1} \|_{L^2(\partial\Omega)}^2
	+\|\phi^{n+1}\|_0^2
	+\|\phi^{n+1}_1\|_{L^2(\partial\Omega)}^2).
\end{align}
Using the discrete Gronwall's inequality and \eqref{2025-10-30-19}, we obtain
 \begin{align}\label{2025-10-30-20}
 	\mu\lambda^*\kappa_3\|\varepsilon(\mathbf{u}_h^{l+1} + \mathbf{u}_h^l)\|_0^2	
 	&+\kappa_3\|\xi_h^{l+1}+ \xi_h^l\|_0^2
 	+\kappa_2\|\eta_h^{l+1}+\eta_h^l\|_0^2\nonumber\\
 	&+\frac{\tau}{\mu_f} \sum_{n=0}^{l}( K \nabla p_h^{n+\frac{1}{2}}, \nabla p_h^{n+\frac{1}{2}} )
 	\leq C_1N_0^2+C_2M_0^2,
\end{align}
where
\begin{align*}
 	N_0^2&=
 	\|\varepsilon(\mathbf{u}_h^{1})\|_0^2 
 	+\|\varepsilon(\mathbf{u}_h^0)\|_0^2
 	+\|\Div \mathbf{u}_0\|_0^2
 	+\|\xi_h^{1}\|_0^2 +\|\xi_h^0\|_0^2
 	+\|\eta_h^1\|_0^2+\|\eta_h^0\|_0^2
 	+\| \rho_f \mathbf{g}\|_0^2,\\
 	M_0^2&=\|\mathbf{f}\|^2_{L^{2}(0,T,L^2(\Omega))}
 	+\|\mathbf{f}_1\|^2_{L^{2}(0,T,L^2(\partial\Omega))}
 	+\|\phi\|^2_{L^{2}(0,T,L^2(\Omega))}
 	+\|\phi_1\|^2_{L^{2}(0,T,L^2(\partial\Omega))}.
 \end{align*} 
Using the triangle inequality, mathematical induction, and the boundedness of initial values, we obtain Theorem $\mathrm{\ref{2025-11-3-1}}$.
\end{proof}
\subsection{Convergence Analysis of the Scheme}

The purpose of this subsection is to use projection operators to obtain optimal error estimates for Algorithm \ref{algorithm-2025-10-15}. First, we give the definitions of three projection operators.

For any $\varphi\in L^2(\Omega)$, define the $L^2$ projection operator $\mathcal{Q}_h: L^2(\Omega)\to S^r_h$:
\begin{align}\label{2026-3-13-3}
	(\varphi-\mathcal{Q}_h\varphi,\psi_h)=0,\quad\forall\ \psi_h\in S^r_h.
\end{align}
According to finite element theory \cite{Brenner2008}, for any $\varphi\in H^{k}(\Omega)$,
\begin{align}
	&\|\varphi-\mathcal{Q}_h\varphi\|_{0}
	+h\|\nabla(\varphi-\mathcal{Q}_h\varphi)\|_{0}
	\leq Ch^{\min\{r+1,~k\}}\|\varphi\|_{H^{k}(\Omega)},\label{2025-11-7-2}
\end{align}

For any $\varphi\in H^1(\Omega)$, define the elliptic projection operator $\mathcal{S}_h: H^1(\Omega)\to S^r_h$:
\begin{align}
	(K\nabla(\varphi-\mathcal{S}_h\varphi),\nabla\psi_h) &=0,\quad \forall \psi_h\in S^r_h,\label{2026-3-13-4}\\
	\int_{\Omega}(\varphi-\mathcal{S}_h\varphi)&=0.\nonumber
\end{align} 
According to \cite{Brenner2008}, for any $\varphi\in H^{k}(\Omega)$
\begin{align}
	&\|\varphi-\mathcal{S}_h\varphi\|_{0}+h\|\nabla(\varphi-\mathcal{S}_h\varphi)\|_{0}\leq Ch^{\min\{r+1,~k\}}\|\varphi\|_{H^{k}(\Omega)},\label{2025-11-7-4}
\end{align}

For any $\mathbf{v}\in\mathbf{H}_\perp^1(\Omega)$, define the elastic projection operator $\mathcal{R}_h:\mathbf{H}_\perp^1(\Omega)\to \mathbf{V}_h$:
\begin{align}\label{2025-11-7-6}
	\left(\varepsilon(\mathbf{v}-\mathcal{R}_h\mathbf{v}),\varepsilon(\mathbf{w}_h)\right)=0\quad \forall\ \mathbf{w}_h\in\mathbf{V}_h.
\end{align}
According to \cite{Brezzi1991}, for any $\mathbf{v}\in \mathbf{H}_\perp^1(\Omega)\cap H^{k}(\Omega)$,
\begin{align}\label{2025-11-7-7}
	\|\mathbf{v}-\mathcal{R}_h\mathbf{v}\|_{0}+h\|\nabla(\mathbf{v}-\mathcal{R}_h\mathbf{v})\|_{0}\leq Ch^{\min\{r+2,~k\}}\|\mathbf{v}\|_{H^{k}(\Omega)}.
\end{align}

To obtain error estimates, let $q=\Div\mathbf{u}$ and introduce the following notation:
\begin{align*}	
	&E_{\mathbf{u}}^{n+1}=\mathbf{u}(t_{n+1})-\mathbf{u}_{h}^{n+1},\quad E_{\xi}^{n+1}=\xi(t_{n+1})-\xi_{h}^{n+1},\\
	&E_{\eta}^{n+1}=\eta(t_{n+1})-\eta_{h}^{n+1},
	\quad
	E_{p}^{n+1}=p(t_{n+1})-p_{h}^{n+1},\\
	&E_{q}^{n+1}=q(t_{n+1})-q_{h}^{n+1}.
\end{align*}

Using the projection operators $\mathcal{Q}_h, \mathcal{S}_h,\mathcal{R}_h$, we further introduce:
\begin{align*}
	&E_\mathbf{u}^{n+1} =\mathbf{u}(t_{n+1})-\mathcal{R}_h(\mathbf{u}(t_{n+1}))+\mathcal{R}_h(\mathbf{u}(t_{n+1}))-\mathbf{u}_h^{n+1}:=\Lambda_\mathbf{u}^{n+1}+\Theta_\mathbf{u}^{n+1}, \\
	&E_q^{n+1} =q(t_{n+1})-\mathcal{S}_h(q(t_{n+1}))+\mathcal{S}_h(q(t_{n+1}))-q_h^{n+1}:=\hat{\Lambda}_q^{n+1}+\hat{\Theta}_q^{n+1}, \\
	&E_{\xi}^{n+1} =\xi(t_{n+1})-\mathcal{Q}_h(\xi(t_{n+1}))+\mathcal{Q}_h(\xi(t_{n+1}))-\xi_h^{n+1}:=\Lambda_\xi^{n+1}+\Theta_\xi^{n+1}, \\
	&E_{\xi}^{n+1} =\xi(t_{n+1})-\mathcal{S}_h(\xi(t_{n+1}))+\mathcal{S}_h(\xi(t_{n+1}))-\xi_h^{n+1}:=\hat{\Lambda}_\xi^{n+1}+\hat{\Theta}_\xi^{n+1}, \\
	&E_{\eta}^{n+1} =\eta(t_{n+1})-\mathcal{Q}_h(\eta(t_{n+1}))+\mathcal{Q}_h(\eta(t_{n+1}))-\eta_h^{n+1}:=\Lambda_\eta^{n+1}+\Theta_\eta^{n+1}, \\
	&E_{\eta}^{n+1} =\eta(t_{n+1})-\mathcal{S}_h(\eta(t_{n+1}))+\mathcal{S}_h(\eta(t_{n+1}))-\eta_h^{n+1}:=\hat{\Lambda}_\eta^{n+1}+\hat{\Theta}_\eta^{n+1},\\
	&E_{p}^{n+1} =p(t_{n+1})-\mathcal{Q}_h(p(t_{n+1}))+\mathcal{Q}_h(p(t_{n+1}))-p_h^{n+1}:=\Lambda_p^{n+1}+\Theta_p^{n+1},\\
	&E_{p}^{n+1} =p(t_{n+1})-\mathcal{S}_h(p(t_{n+1}))+\mathcal{S}_h(p(t_{n+1}))-p_h^{n+1}:=\hat{\Lambda}_p^{n+1}+\hat{\Theta}_p^{n+1}.
\end{align*}

\begin{theorem}\label{th-2025-11-7-1}
	Suppose $\left\{(\mathbf{u}_h^n,\xi_h^n,\eta_h^n)\right\}_{n\geq0}$ is the numerical solution of Algorithm \ref{algorithm-2025-10-15}. Then the following inequality holds:
	\begin{align}\label{2025-11-7-1}
		&\mathcal{J}_h^{l+1}
		+\tau\sum_{n=0}^{l}(
		d_t\Div(\Theta_\mathbf{u}^{n+1}+\Theta_\mathbf{u}^{n}),(\Theta_\xi^{n+1}+\Theta_\xi^{n}))
		+2\tau\sum_{n=0}^{l}
		(d_t(\Theta_\eta^{n+1}+\Theta_\eta^{n}),\lambda^*\kappa_1d_t\Div\Theta_{\mathbf{u}}^{n+1})
		\nonumber\\
		&\qquad+\lambda^*\kappa_3\tau\sum_{n=0}^{l}
		(d_t(d_t\Div(\Theta_\mathbf{u}^{n+1}+\Theta_\mathbf{u}^{n})),(\Theta_\xi^{n+1}+\Theta_\xi^{n}))
		+\frac{2\tau}{\mu_f}\sum_{n=0}^{l}
		(K\nabla\hat{Z}_p^{n+\frac{1}{2}},\nabla\hat{Z}_p^{n+\frac{1}{2}})
		\nonumber\\		
		&\qquad+\tau\sum_{n=0}^{l}\left[
		\mu\lambda^*\kappa_3\tau\|d_t\varepsilon(\Theta_\mathbf{u}^{n+1}+\Theta_\mathbf{u}^{n})\|_0^2
		+\frac{\kappa_3\tau}{2}\|d_t(\Theta_\xi^{n+1}+\Theta_\xi^n)\|_0^2	
		+\frac{\kappa_2\tau}{2}\|d_t(\Theta_\eta^{n+1}+\Theta_\eta^n)\|_0^2	
		\right]\nonumber\\
		&\leq\mathcal{J}_h^{0}+\sum_{i=1}^{12}\Phi_i,
	\end{align}	 
	where
	\begin{align*}
		&\mathcal{J}_h^{l+1}=2\mu\lambda^*\kappa_3\|\varepsilon(\Theta_\mathbf{u}^{l+1}+\Theta_\mathbf{u}^{l})\|_0^2
		+\frac{\kappa_3}{2}\|\Theta_\xi^{l+1}+\Theta_\xi^l\|_0^2
		+\frac{\kappa_{2}}{2}
		\|\Theta_\eta^{l+1}+\Theta_\eta^{l}\|_0^2,
	\end{align*}
	and
	\begin{align*}
		\Phi_1&=\tau\sum_{n=0}^{l}\left[
		-(d_tE_{\xi\_re}^n,\Theta_\xi^{n+1}+\Theta_\xi^{n})+(d_tE_{\eta\_re}^n,\Theta_\xi^{n+1}+\Theta_\xi^{n})\right]\\
		\Phi_2&=-\kappa_1\tau\sum_{n=0}^{l}(d_t(\Theta_\eta^{n+1}+\Theta_\eta^{n}),\Lambda_\xi^{n+1}-\hat{\Lambda}_\xi^{n+1}+\Lambda_\xi^{n}-\hat{\Lambda}_\xi^{n})\\
		\Phi_3&=-\kappa_2\tau\sum_{n=0}^{l}(d_t(\Theta_\eta^{n+1}+\Theta_\eta^{n}),\Lambda_\eta^{n+1}-\hat{\Lambda}_\eta^{n+1}+\Lambda_\eta^{n}-\hat{\Lambda}_\eta^{n})\\
		\Phi_4&=-2\lambda^*\kappa_1\tau\sum_{n=0}^{l}
		(d_t(\Theta_\eta^{n+1}+\Theta_\eta^{n}),d_t(\Div\Lambda_{\mathbf{u}}^{n+1}-\hat{\Lambda}_q^{n+1}))\\
		\Phi_5&=-\frac{2\lambda^*\kappa_1}{\mu_{f}}\tau\sum_{n=0}^{l}
		(K\nabla R_\mathbf{u}^{n+\frac{1}{2}}+K\nabla R_\mathbf{u}^{n-\frac{1}{2}},\nabla\hat{Z}_p^{n+\frac{1}{2}})\\	
		\Phi_6&=-4\lambda^*\kappa_3\tau\sum_{n=0}^{l}
		(d_t\Div (\Lambda_\mathbf{u}^{n+1}+ \Lambda_\mathbf{u}^{n}),\Theta_\xi^{n+1}+\Theta_\xi^{n})\\
		\Phi_7&=4\lambda^*\kappa_3\tau\sum_{n=0}^{l}
		\left(\Lambda_\xi^{n+1}+\Lambda_\xi^{n},d_t\Div(\Theta_\mathbf{u}^{n+1}+\Theta_\mathbf{u}^{n})\right)\\
		\Phi_8&=2\kappa_3\tau^2\cdot
		\mathrm{e}^{-\tfrac{\tau}{2\lambda^{*}\kappa_3}}
		\sum_{n=0}^{l}
		(d_t\Theta_\xi^{n+1}, \Theta_\xi^{n+1}+\Theta_\xi^{n})\\	
		\Phi_{9}&=2\kappa_1\tau^2\cdot
		\mathrm{e}^{-\tfrac{\tau}{2\lambda^{*}\kappa_3}}\sum_{n=0}^{l}
		(d_t\Theta_\eta^n, \Theta_\xi^{n+1}+\Theta_\xi^{n})\\	
		\Phi_{10}&=-2\tau\sum_{n=0}^{l}(R_\eta^{n+\frac{1}{2}}+R_\eta^{n-\frac{1}{2}},\hat{Z}_p^{n+\frac{1}{2}})\\
		\Phi_{11}&=-\frac{2\tau}{\mu_f}\sum_{n=0}^{l}
		(K\nabla\hat{Z}_p^{n-\frac{1}{2}},\nabla\hat{Z}_p^{n+\frac{1}{2}})\\
		\Phi_{12}&=\tau\sum_{n=0}^{l} (-R_4+ R_5).
	\end{align*}
\end{theorem}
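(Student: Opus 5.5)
The plan is to repeat the discrete energy argument of Theorem \ref{2025-11-3-1}, this time for the error equations, with the projection splittings introduced above.

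\emph{Step 1: error equations.} Evaluate the continuous weak form \eqref{2025-10-19-1}--\eqref{2025-10-19-3} at $t_{n+\frac12}$. Replace exact midpoint values by averages of $t_n$ and $t_{n+1}$, and time derivatives by $d_t$. Subtract \eqref{2025-10-20-1}--\eqref{2025-10-20-3}. This gives three error equations whose right-hand sides contain the truncation residuals:
\begin{itemize}
\item the Crank--Nicolson consistency errors $R_4,R_5,R_\eta^{n\pm\frac12},R_{\mathbf u}^{n\pm\frac12}$;
\item the quadrature errors $E_{\xi\_re}^n$ and $E_{\eta\_re}^n$ of the composite trapezoidal rule applied to $J_\xi^n$ and $J_\eta^n$.
\end{itemize}

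Then split every error as $\Lambda+\Theta$ (or $\hat\Lambda+\hat\Theta$).
\begin{itemize}
\item The elastic projection $\mathcal R_h$ removes $\mu(\varepsilon(\Lambda_{\mathbf u}),\varepsilon(\mathbf v_h))$ from the momentum equation.
\item $\mathcal Q_h$ removes $(\Lambda_\xi,\varphi_h)$ and $(\Lambda_\eta,\varphi_h)$ from the volumetric equation \eqref{2025-10-20-2}.
\item $\mathcal S_h$ removes the $K$-weighted gradient terms of $\hat\Lambda_\xi,\hat\Lambda_\eta,\hat\Lambda_q$ in the diffusion equation.
\end{itemize}
Set $\hat Z_p^{n+\frac12}:=\kappa_1\hat\Theta_\xi^{n+\frac12}+\kappa_2\hat\Theta_\eta^{n+\frac12}+\lambda^*\kappa_1 d_t\hat\Theta_q^{n+1}$, the discrete analogue of $p_h^{n+\frac12}$. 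This choice makes the flux term coercive.

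\emph{Step 2: test functions, mirroring the stability proof.}
\begin{itemize}
\item In the momentum error equation take $\mathbf v_h=d_t(\Theta_{\mathbf u}^{n+1}+\Theta_{\mathbf u}^n)$ and multiply by $4\lambda^*\kappa_3$.
\item Apply $d_t$ to the volumetric error equation and test with $\Theta_\xi^{n+1}+\Theta_\xi^n$. The $J$-differences produce $\Phi_1,\Phi_8,\Phi_9$, exactly as $R_2$ was handled through \eqref{2025-10-30-15}.
\item Test the diffusion error equation with $\psi_h=2\hat Z_p^{n+\frac12}$, at both time levels $n+1$ and $n$, as in \eqref{2025-10-20-9}--\eqref{2026-1-8-2}. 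This produces $\Phi_{10}$ and $\Phi_{11}$.
\end{itemize}

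Add the resulting equations and use identity \eqref{2025-10-30-9} to generate $d_t$ of the squared norms plus the $\tau\|d_t(\cdot)\|^2$ dissipation terms. Then apply $\tau\sum_{n=0}^l$ and telescope; this yields $\mathcal J_h^{l+1}-\mathcal J_h^0$.

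\emph{Step 3: sorting the leftover terms.}
\begin{itemize}
\item Do not estimate the genuinely coupled cross terms yet. These are $(d_t\Div\Theta_{\mathbf u},\Theta_\xi)$, $(d_t\Theta_\eta,\lambda^*\kappa_1 d_t\Div\Theta_{\mathbf u})$ and $\lambda^*\kappa_3(d_t^2\Div\Theta_{\mathbf u},\Theta_\xi)$. Keep them on the left side, as in \eqref{2025-11-7-1}.
\item Every term involving a projection error goes to the right as $\Phi_2$--$\Phi_7$. In particular, $\Theta$ versus $\hat\Theta$ mismatches contribute the differences $\Lambda-\hat\Lambda$.
\item The $\hat Z_p^{n-\frac12}$ cross term stays as $\Phi_{11}$. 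It is later absorbed by \eqref{2026-1-8-3}.
\end{itemize}

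\emph{Main obstacle.} The hard part is the bookkeeping between the two different projections. The diffusion equation naturally lives in $\hat\Theta$ via $\mathcal S_h$, while the volumetric and momentum equations live in $\Theta$ via $\mathcal Q_h$ and $\mathcal R_h$. The coupling terms must therefore be rewritten as $\Theta$-terms plus $(\Lambda-\hat\Lambda)$ remainders; this rewriting is what yields $\Phi_2$--$\Phi_4$. I would write $\hat\Theta=\Theta+\Lambda-\hat\Lambda$ systematically and check that all pure-$\Theta$ pairings reproduce exactly the left-hand cross terms of \eqref{2025-11-7-1}.

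A secondary difficulty is verifying the exact form of the truncated $J$-recursion difference, which gives $\Phi_1$ and the $\mathrm e^{-\tau/(2\lambda^*\kappa_3)}$ weighted terms $\Phi_8,\Phi_9$. I would redo the computation \eqref{2025-10-30-15} verbatim with $\Theta$ in place of the discrete solution.
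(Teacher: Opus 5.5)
\paragraph{Gap: the step that produces the left-hand side of \eqref{2025-11-7-1} is missing.} Your architecture is the paper's: error equations, $\Lambda/\Theta$ splitting, $d_t$ applied to \eqref{2025-11-5-2}, the tests $4\lambda^*\kappa_3\,d_t(\Theta_{\mathbf u}^{n+1}+\Theta_{\mathbf u}^n)$, $\Theta_\xi^{n+1}+\Theta_\xi^n$ and $2\hat Z_p^{n+\frac12}$ at two levels, then $\tau\sum_{n=0}^l$. What you omit is the error analogue of \eqref{2026-3-22-5} from the stability proof, and without it Steps 2--3 do not yield \eqref{2025-11-7-1}.

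When the tested equations are added, the terms $\mp4\lambda^*\kappa_3(d_t\Div(\Theta_{\mathbf u}^{n+1}+\Theta_{\mathbf u}^n),\Theta_\xi^{n+1}+\Theta_\xi^n)$ cancel exactly. So no pure-$\Theta$ pairing gives $\tau\sum_n(d_t\Div(\Theta_{\mathbf u}^{n+1}+\Theta_{\mathbf u}^n),\Theta_\xi^{n+1}+\Theta_\xi^n)$ with weight one, nor $\lambda^*\kappa_3\tau\sum_n(d_t^2\Div(\cdot),\cdot)$. The third term you list, with $\lambda^*\kappa_1 d_t\Div\Theta_{\mathbf u}^{n+1}$, does arise directly. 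Besides it, the test $\psi_h=2\hat Z_p^{n+\frac12}$ leaves $\kappa_1\tau\sum_n(d_t(\Theta_\eta^{n+1}+\Theta_\eta^n),\Theta_\xi^{n+1}+\Theta_\xi^n)$.

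Telescoping also fails to give $\mathcal J_h^{l+1}-\mathcal J_h^0$. The only $\Theta_\xi$ self-pairing on the left is $(1+2\mathrm{e}^{-\tau/(2\lambda^*\kappa_3)})\kappa_3\tau^2\sum_n(d_t(\Theta_\xi^{n+1}+\Theta_\xi^n),\Theta_\xi^{n+1}+\Theta_\xi^n)$. By \eqref{2025-10-30-9} this gives $\frac{\kappa_3\tau}{2}\|\Theta_\xi^{l+1}+\Theta_\xi^l\|_0^2$ times that factor, off by a factor $\tau$ from $\mathcal J_h^{l+1}$.

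The paper gets the $\tau$-free term $\frac{\kappa_3}{2}\|\Theta_\xi^{l+1}+\Theta_\xi^l\|_0^2$, the dissipation $\frac{\kappa_3\tau^2}{2}\sum_n\|d_t(\Theta_\xi^{n+1}+\Theta_\xi^n)\|_0^2$, and both $\Div\Theta_{\mathbf u}$--$\Theta_\xi$ cross terms in one move. It substitutes into that $\kappa_1\tau\sum$ term the $\Theta$-level form of \eqref{chapt4-2.6}, $\kappa_1\eta=\Div\mathbf u+\kappa_3\xi+\lambda^*\kappa_3\Div\mathbf u_t$; this is the identity displayed at the end of the proof. Only then are the cross terms kept on the left. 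The paper merely asserts this identity. The scheme \eqref{2025-10-20-2} imposes the integrated, trapezoidal-memory form of the relation, not its differential form. A rigorous version must therefore derive the identity from \eqref{2025-11-5-2} or carry its defect as an extra right-hand-side term.

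\paragraph{Misattributed terms.} Several attributions are off, which matters for matching $\Phi_1,\dots,\Phi_{12}$.
\begin{itemize}
\item $R_4$ and $R_5$ are not Crank--Nicolson consistency errors. They are the history sums $2\sum_i\mathrm{e}^{-(n+\frac12-i)\tau/(\lambda^*\kappa_3)}d_t\Theta^i$, together with the $\mathrm{e}^{-\tau/(2\lambda^*\kappa_3)}d_t\Theta^n$ and $\Theta^0$ terms. They come from applying $d_t$ to the $E_{\xi r_3}$, $E_{\eta r_3}$ parts \eqref{2025-11-4-6}, the error analogue of \eqref{2025-10-30-15}, and they make up $\Phi_{12}$.
\item $\Phi_1$ comes from the quadrature remainders \eqref{2025-11-14-3}--\eqref{2025-11-14-4}.
\item $\Phi_8$ and $\Phi_9$ come from the current-step weights $2\kappa_3\tau\mathrm{e}^{-\tau/(2\lambda^*\kappa_3)}\Theta_\xi^n$ and $2\kappa_1\tau\mathrm{e}^{-\tau/(2\lambda^*\kappa_3)}\Theta_\eta^n$ in \eqref{2025-11-4-7}. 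For $\Phi_8$ this is after writing $d_t\Theta_\xi^n=d_t(\Theta_\xi^{n+1}+\Theta_\xi^n)-d_t\Theta_\xi^{n+1}$.
\end{itemize}

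\paragraph{Terms your check will flag.} The exact bookkeeping you propose will expose terms the statement omits.
\begin{itemize}
\item The $\kappa_1$-couplings enter with weight $\kappa_1\tau$ from the volumetric equation but $\kappa_1$ from the diffusion equations, so they do not cancel. The leftover $\kappa_1\tau^2\sum_n(d_t(\Theta_\eta^{n+1}+\Theta_\eta^n),\Theta_\xi^{n+1}+\Theta_\xi^n)$ is dropped in \eqref{2025-11-6-4}.
\item The midpoint-versus-average residuals your Step 1 generates in the volumetric and flux terms are dropped in \eqref{2025-11-4-2} and in the subsequent Remark.
\end{itemize}
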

\begin{proof}
Subtract the discrete scheme from the continuous variational equations to obtain the error equations. The initial value errors are
\begin{align*}
	E_{\mathbf{u}}^0=0,\quad E_{\xi}^0=0,\quad E_{\eta}^{0}=0.
\end{align*}
Subtracting \eqref{2025-10-20-1} from \eqref{2025-10-19-1}, we obtain
\begin{align}\label{2025-11-4-1}
	\mu\left(\varepsilon(E_\mathbf{u}^{n+1})+\varepsilon(E_\mathbf{u}^{n}),\varepsilon(\mathbf{v}_h)\right)-(E_\xi^{n+1}+E_\xi^{n},\Div\mathbf{v}_h)=0\qquad\forall\mathbf{v}_h\in \mathbf{V}_h.
\end{align}
Subtracting \eqref{2025-10-20-2} from \eqref{2025-10-19-2}, we obtain
\begin{align}\label{2025-11-4-2}
	4\lambda^*\kappa_3(\Div E_\mathbf{u}^{n+1}+\Div E_\mathbf{u}^{n},\varphi_h)
	+\left(E_{\xi r},\varphi_h\right)
	=\left(E_{\eta r},\varphi_h\right)
	\qquad	\forall\,\varphi_h\in M_h,
\end{align}
where
\begin{align*}
	E_{\xi r}&=8\kappa_3\cdot
	\mathrm{e}^{-\tfrac{t_{n+\frac{1}{2}}}{\lambda^{*}\kappa_3}}\int_{0}^{t_{n+\frac{1}{2}}}\xi(s)\cdot\mathrm{e}^{\frac{s}{\lambda^{*}\kappa_3}}\ \mathrm{~d}s\nonumber\\
	&\quad-8\kappa_3 \cdot \mathrm{e}^{-\tfrac{t_{n+\frac{1}{2}}}{\lambda^* \kappa_3}} \left( J_\xi^n 
	+ \frac{\tau}{4} \cdot  \mathrm{e}^{\tfrac{t_{n+\frac{1}{2}}}{\lambda^* \kappa_3}}\cdot\frac{\xi_h^{n+1} + \xi_h^n}{2}
	+\frac{\tau}{4}\cdot\mathrm{e}^{\tfrac{t_n}{\lambda^* \kappa_3}}\cdot\xi_h^n 
	\right),\\
	E_{\eta r}&=8\kappa_1\cdot
	\mathrm{e}^{-\tfrac{t_{n+\frac{1}{2}}}{\lambda^{*}\kappa_3}}\int_{0}^{t_{n+\frac{1}{2}}}\eta(s)\cdot\mathrm{e}^{\frac{s}{\lambda^{*}\kappa_3}}\ \mathrm{~d}s\nonumber\\
	&\quad-8\kappa_1 \cdot \mathrm{e}^{-\tfrac{t_{n+\frac{1}{2}}}{\lambda^* \kappa_3}} \left( J_\eta^n 
	+ \frac{\tau}{4} \cdot  \mathrm{e}^{\tfrac{t_{n+\frac{1}{2}}}{\lambda^* \kappa_3}}\cdot\frac{\eta_h^{n+1} + \eta_h^n}{2}
	+\frac{\tau}{4}\cdot\mathrm{e}^{\tfrac{t_n}{\lambda^* \kappa_3}}\cdot\eta_h^n 
	\right).
\end{align*}
%
%
We introduce some intermediate terms, and obtain
\begin{align}\label{2025-11-4-3}
	E_{\xi r}
	&=8\kappa_3\cdot
	\mathrm{e}^{-\tfrac{t_{n+\frac{1}{2}}}{\lambda^{*}\kappa_3}}\left[
	\int_{0}^{t_{n}}\xi(s)\cdot\mathrm{e}^{\frac{s}{\lambda^{*}\kappa_3}}\ \mathrm{~d}s
	-\frac{\tau}{2}\left(
	\mathrm{e}^{\tfrac{t_{n}}{\lambda^{*}\kappa_3}}\cdot\xi(t_n)
	+2\sum_{i=1}^{n-1}\mathrm{e}^{\tfrac{t_{i}}{\lambda^{*}\kappa_3}}\cdot\xi(t_i)
	+\xi(0)\right)
	\right]\nonumber\\
	&\quad+8\kappa_3\cdot
	\mathrm{e}^{-\tfrac{t_{n+\frac{1}{2}}}{\lambda^{*}\kappa_3}}\left[
	\int_{t_n}^{t_{n+\frac{1}{2}}}\xi(s)\cdot\mathrm{e}^{\frac{s}{\lambda^{*}\kappa_3}}\ \mathrm{~d}s
	- \frac{\tau}{4} \cdot  \mathrm{e}^{\tfrac{t_{n+\frac{1}{2}}}{\lambda^* \kappa_3}}\cdot\frac{\xi(t_{n+1}) + \xi(t_n)}{2}
	-\frac{\tau}{4}\cdot\mathrm{e}^{\tfrac{t_n}{\lambda^* \kappa_3}}\cdot\xi(t_n)
	\right]\nonumber\\
	&\quad+8\kappa_3 \cdot
	\mathrm{e}^{-\tfrac{t_{n+\frac{1}{2}}}{\lambda^{*}\kappa_3}}\left[
	\frac{\tau}{2}\left(
	\mathrm{e}^{\tfrac{t_{n}}{\lambda^{*}\kappa_3}}\cdot\xi(t_n)
	+2\sum_{i=1}^{n-1}\mathrm{e}^{\tfrac{t_{i}}{\lambda^{*}\kappa_3}}\cdot\xi(t_i)
	+\xi(0)\right)-J_{\xi}^n
	\right]\nonumber\\
	&\quad+8\kappa_3\cdot
	\mathrm{e}^{-\tfrac{t_{n+\frac{1}{2}}}{\lambda^{*}\kappa_3}}\left[
	\frac{\tau}{4} \cdot  \mathrm{e}^{\tfrac{t_{n+\frac{1}{2}}}{\lambda^* \kappa_3}}\cdot\frac{\xi(t_{n+1}) + \xi(t_n)}{2}
	+\frac{\tau}{4}\cdot\mathrm{e}^{\tfrac{t_n}{\lambda^* \kappa_3}}\cdot\xi(t_n)
	\right]\nonumber\\
	&\quad-8\kappa_3\cdot
	\mathrm{e}^{-\tfrac{t_{n+\frac{1}{2}}}{\lambda^{*}\kappa_3}}\left[
	\frac{\tau}{4} \cdot  \mathrm{e}^{\tfrac{t_{n+\frac{1}{2}}}{\lambda^* \kappa_3}}\cdot\frac{\xi_h^{n+1} + \xi_h^n}{2}
	+\frac{\tau}{4}\cdot\mathrm{e}^{\tfrac{t_n}{\lambda^* \kappa_3}}\cdot\xi_h^n 
	\right]\nonumber\\
	&=E_{\xi r_1}+E_{\xi r_2}+E_{\xi r_3}+E_{\xi r_4},
\end{align}
where $E_{\xi r_1}$ is the error of the standard composite trapezoidal rule on the interval $[0,t_n]$, given by
\begin{align}\label{2025-11-4-4}
	E_{\xi r_1}&=8\kappa_3\cdot
	\mathrm{e}^{-\tfrac{t_{n+\frac{1}{2}}}{\lambda^{*}\kappa_3}}\left[
	\int_{0}^{t_{n}}\xi(s)\cdot\mathrm{e}^{\frac{s}{\lambda^{*}\kappa_3}}\ \mathrm{~d}s
	-\frac{\tau}{2}\left(
	\mathrm{e}^{\tfrac{t_{n}}{\lambda^{*}\kappa_3}}\cdot\xi(t_n)
	+2\sum_{i=1}^{n-1}\mathrm{e}^{\tfrac{t_{i}}{\lambda^{*}\kappa_3}}\cdot\xi(t_i)
	+\xi(0)\right)
	\right]\nonumber\\
	&=8\kappa_3\cdot
	\mathrm{e}^{-\tfrac{t_{n+\frac{1}{2}}}{\lambda^{*}\kappa_3}}\left[-\frac{t_n}{12}\tau^2\cdot(\xi(s)\cdot\mathrm{e}^{\frac{s}{\lambda^{*}\kappa_3}})''\right]_{s=\zeta_1},
\end{align}
where $\zeta_1\in (0,t_n)$.

$E_{\xi r_2}$ is the error on the interval $[t_n,t_{n+\frac{1}{2}}]$, which consists of two parts: one part is the error of the trapezoidal rule itself, and the other part is the error caused by linear interpolation, which can be estimated using Taylor's formula.
\begin{align}\label{2025-11-4-5}
	E_{\xi r_2}&=8\kappa_3\cdot
	\mathrm{e}^{-\tfrac{t_{n+\frac{1}{2}}}{\lambda^{*}\kappa_3}}\left[
	\int_{t_n}^{t_{n+\frac{1}{2}}}\xi(s)\cdot\mathrm{e}^{\frac{s}{\lambda^{*}\kappa_3}}\ \mathrm{~d}s
	- \frac{\tau}{4} \cdot  \mathrm{e}^{\tfrac{t_{n+\frac{1}{2}}}{\lambda^* \kappa_3}}\cdot\frac{\xi(t_{n+1}) + \xi(t_n)}{2}
	-\frac{\tau}{4}\cdot\mathrm{e}^{\tfrac{t_n}{\lambda^* \kappa_3}}\cdot\xi(t_n)
	\right]\nonumber\\
	&=8\kappa_3\cdot
	\mathrm{e}^{-\tfrac{t_{n+\frac{1}{2}}}{\lambda^{*}\kappa_3}}\left[
	\int_{t_n}^{t_{n+\frac{1}{2}}}\xi(s)\cdot\mathrm{e}^{\frac{s}{\lambda^{*}\kappa_3}}\ \mathrm{~d}s
	- \frac{\tau}{4} \cdot  \mathrm{e}^{\tfrac{t_{n+\frac{1}{2}}}{\lambda^* \kappa_3}}\cdot\xi(t_{n+\frac{1}{2}})
	-\frac{\tau}{4}\cdot\mathrm{e}^{\tfrac{t_n}{\lambda^* \kappa_3}}\cdot\xi(t_n)
	\right]\nonumber\\
	&\quad+8\kappa_3\cdot
	\mathrm{e}^{-\tfrac{t_{n+\frac{1}{2}}}{\lambda^{*}\kappa_3}}\left[
	\frac{\tau}{4} \cdot  \mathrm{e}^{\tfrac{t_{n+\frac{1}{2}}}{\lambda^* \kappa_3}}\cdot\xi(t_{n+\frac{1}{2}})
	- \frac{\tau}{4} \cdot  \mathrm{e}^{\tfrac{t_{n+\frac{1}{2}}}{\lambda^* \kappa_3}}\cdot\frac{\xi(t_{n+1}) + \xi(t_n)}{2}
	\right]\nonumber\\
	&=8\kappa_3\cdot
	\mathrm{e}^{-\tfrac{t_{n+\frac{1}{2}}}{\lambda^{*}\kappa_3}}\left[
	\int_{t_n}^{t_{n+\frac{1}{2}}}\xi(s)\cdot\mathrm{e}^{\frac{s}{\lambda^{*}\kappa_3}}\ \mathrm{~d}s
	- \frac{\tau}{4} \cdot  \mathrm{e}^{\tfrac{t_{n+\frac{1}{2}}}{\lambda^* \kappa_3}}\cdot\xi(t_{n+\frac{1}{2}})
	-\frac{\tau}{4}\cdot\mathrm{e}^{\tfrac{t_n}{\lambda^* \kappa_3}}\cdot\xi(t_n)
	\right]\nonumber\\
	&\quad+2\kappa_3\tau\left[
	\xi(t_{n+\frac{1}{2}})
	- \frac{\xi(t_{n+1}) + \xi(t_n)}{2}
	\right]\nonumber\\
	&=-\frac{\kappa_3\tau^3}{12}\cdot
	\mathrm{e}^{-\tfrac{t_{n+\frac{1}{2}}}{\lambda^{*}\kappa_3}}
	(\xi(s)\cdot\mathrm{e}^{\frac{s}{\lambda^{*}\kappa_3}})''|_{s=\zeta_2}
	-\frac{\kappa_3\tau^3}{4}[(\xi(s))''|_{s=\zeta_3}
	+(\xi(s))''|_{s=\zeta_4}],
\end{align}
where $\zeta_2\in (t_n,t_{n+\frac{1}{2}})$, $\zeta_3\in(t_n,t_{n+\frac{1}{2}})$, $\zeta_4\in(t_{n+\frac{1}{2}},t_{n+1})$.

Next, estimate $E_{\xi r_3}$, we obtain
\begin{align}\label{2025-11-4-6}
	E_{\xi r_3}&=4\kappa_3\tau\cdot
	\mathrm{e}^{-\tfrac{t_{n+\frac{1}{2}}}{\lambda^{*}\kappa_3}}\left[
	\mathrm{e}^{\tfrac{t_{n}}{\lambda^{*}\kappa_3}}\cdot(\xi(t_n)-\xi_h^n)
	+2\sum_{i=1}^{n-1}\mathrm{e}^{\tfrac{t_{i}}{\lambda^{*}\kappa_3}}\cdot(\xi(t_i)-\xi_h^i)
	+\xi(0)-\xi_h^0
	\right]\nonumber\\
	&=4\kappa_3\tau\cdot
	\mathrm{e}^{-\tfrac{t_{n+\frac{1}{2}}}{\lambda^{*}\kappa_3}}\left[
	\mathrm{e}^{\tfrac{t_{n}}{\lambda^{*}\kappa_3}}\cdot E_\xi^n
	+2\sum_{i=1}^{n-1}\mathrm{e}^{\tfrac{t_{i}}{\lambda^{*}\kappa_3}}\cdot E_\xi^i
	+E_\xi^0\right].
\end{align}
Finally, estimate $E_{\xi r_4}$, we obtain
\begin{align}\label{2025-11-4-7}
	E_{\xi r_4}&=2\kappa_3\tau\cdot
	\mathrm{e}^{-\tfrac{t_{n+\frac{1}{2}}}{\lambda^{*}\kappa_3}}\left[
	 \mathrm{e}^{\tfrac{t_{n+\frac{1}{2}}}{\lambda^* \kappa_3}}\cdot\left(\frac{\xi(t_{n+1}) + \xi(t_n)}{2}
	-\frac{\xi_h^{n+1} + \xi_h^n}{2}\right)
	+\mathrm{e}^{\tfrac{t_n}{\lambda^* \kappa_3}}\cdot\left(\xi(t_n)-\xi_h^n\right)
	\right]\nonumber\\
	&=\kappa_3\tau
	\left(E_\xi^{n+1}+E_\xi^n
	\right)
	+2\kappa_3\tau\cdot
	\mathrm{e}^{-\tfrac{\tau}{2\lambda^{*}\kappa_3}}
	E_\xi^n.
\end{align}
Substituting \eqref{2025-11-4-4}-\eqref{2025-11-4-7} into \eqref{2025-11-4-3}, we get
\begin{align}\label{2025-11-4-8}
	E_{\xi r}&=\kappa_3\tau
	\left(E_\xi^{n+1}+E_\xi^n
	\right)
	+2\kappa_3\tau\cdot
	\mathrm{e}^{-\tfrac{\tau}{2\lambda^{*}\kappa_3}}\cdot
	E_\xi^n\nonumber\\
	&\quad
	+4\kappa_3\tau\cdot
	\mathrm{e}^{-\tfrac{t_{n+\frac{1}{2}}}{\lambda^{*}\kappa_3}}\left[
	\mathrm{e}^{\tfrac{t_{n}}{\lambda^{*}\kappa_3}}\cdot E_\xi^n
	+2\sum_{i=1}^{n-1}\mathrm{e}^{\tfrac{t_{i}}{\lambda^{*}\kappa_3}}\cdot E_\xi^i
	+E_\xi^0\right]+E_{\xi\_{re}}^n,
\end{align}
where
\begin{align}\label{2025-11-14-3}
	E_{\xi\_{re}}^n&=
	-\frac{2n\kappa_3\tau^3}{3}\cdot
	\mathrm{e}^{-\tfrac{t_{n+\frac{1}{2}}}{\lambda^{*}\kappa_3}}\left[(\xi(s)\cdot\mathrm{e}^{\frac{s}{\lambda^{*}\kappa_3}})''\right]_{s=\zeta_1}
	-\frac{\kappa_3\tau^3}{12}\cdot
	\mathrm{e}^{-\tfrac{t_{n+\frac{1}{2}}}{\lambda^{*}\kappa_3}}
	\left[(\xi(s)\cdot\mathrm{e}^{\frac{s}{\lambda^{*}\kappa_3}})''\right]_{s=\zeta_2}\nonumber\\
	&\quad
	-\frac{\kappa_3\tau^3}{4}[(\xi(s))''|_{s=\zeta_3}
	+(\xi(s))''|_{s=\zeta_4}],
\end{align}
and
\begin{align*}
	\zeta_1\in (0,t_{n}), 
	\quad \zeta_2\in (t_n,t_{n+\frac{1}{2}}),
	\quad \zeta_3\in	(t_n,t_{n+\frac{1}{2}}),
	\quad\zeta_4\in  
	(t_{n+\frac{1}{2}},t_{n+1}).
\end{align*}
Similarly, we have
\begin{align}\label{2025-11-4-9}
	E_{\eta r}&=\kappa_1\tau
	\left(E_\eta^{n+1}+E_\eta^n
	\right)
	+2\kappa_1\tau\cdot
	\mathrm{e}^{-\tfrac{\tau}{2\lambda^{*}\kappa_3}}\cdot
	E_\eta^n\nonumber\\
	&\quad
	+4\kappa_1\tau\cdot
	\mathrm{e}^{-\tfrac{t_{n+\frac{1}{2}}}{\lambda^{*}\kappa_3}}\left[
	\mathrm{e}^{\tfrac{t_{n}}{\lambda^{*}\kappa_3}}\cdot E_\eta^n
	+2\sum_{i=1}^{n-1}\mathrm{e}^{\tfrac{t_{i}}{\lambda^{*}\kappa_3}}\cdot E_\eta^i
	+E_\eta^0\right]+E_{\eta\_{re}}^n,
\end{align}
where
\begin{align}\label{2025-11-14-4}
	E_{\eta\_{re}}^n&=
	-\frac{2 n\kappa_1\tau^3}{3}\cdot\mathrm{e}^{-\tfrac{t_{n+\frac{1}{2}}}{\lambda^{*}\kappa_3}}
	\left[ (\eta(s)\cdot\mathrm{e}^{\frac{s}{\lambda^{*}\kappa_3}})''\right]_{s=\zeta_5}
	-\frac{\kappa_1\tau^3}{12}\cdot
	\mathrm{e}^{-\tfrac{t_{n+\frac{1}{2}}}{\lambda^{*}\kappa_3}}
	\left[(\eta(s)\cdot\mathrm{e}^{\frac{s}{\lambda^{*}\kappa_3}})''\right]_{s=\zeta_6}\nonumber\\
	&\quad-\frac{\kappa_1\tau^3}{4}[(\eta(s))''|_{s=\zeta_7}
	+(\eta(s))''|_{s=\zeta_8}],
\end{align}
and
\begin{align*}
	\zeta_5\in (0,t_{n}), 
	\quad \zeta_6\in (t_n,t_{n+\frac{1}{2}}),
	\quad \zeta_7\in	(t_n,t_{n+\frac{1}{2}}),
	\quad \zeta_8\in  
	(t_{n+\frac{1}{2}},t_{n+1}).
\end{align*}
Substituting \eqref{2025-11-4-8}-\eqref{2025-11-14-4} into \eqref{2025-11-4-2}, we obtain
\begin{align}\label{2025-11-4-10}
	&4\lambda^*\kappa_3(\Div E_\mathbf{u}^{n+1}+\Div E_\mathbf{u}^{n},\varphi_h)
	+\kappa_3\tau
	\left(E_\xi^{n+1}+E_\xi^n, \varphi_h
	\right)
	+2\kappa_3\tau\cdot
	\mathrm{e}^{-\tfrac{\tau}{2\lambda^{*}\kappa_3}}
	(E_\xi^n, \varphi_h)\nonumber\\
	&\quad
	+4\kappa_3\tau\cdot
	\mathrm{e}^{-\tfrac{t_{n+\frac{1}{2}}}{\lambda^{*}\kappa_3}}\left(
	\mathrm{e}^{\tfrac{t_{n}}{\lambda^{*}\kappa_3}}\cdot E_\xi^n
	+2\sum_{i=1}^{n-1}\mathrm{e}^{\tfrac{t_{i}}{\lambda^{*}\kappa_3}}\cdot E_\xi^i
	+E_\xi^0, \varphi_h\right)
	+(E_{\xi\_{re}}^n,\varphi_h)\nonumber\\
	&=\kappa_1\tau
	\left(E_\eta^{n+1}+E_\eta^n, \varphi_h
	\right)
	+2\kappa_1\tau\cdot
	\mathrm{e}^{-\tfrac{\tau}{2\lambda^{*}\kappa_3}}
	(E_\eta^n, \varphi_h)
	+(E_{\eta\_{re}}^n,\varphi_h)\nonumber\\
	&\quad
	+4\kappa_1\tau\cdot
	\mathrm{e}^{-\tfrac{t_{n+\frac{1}{2}}}{\lambda^{*}\kappa_3}}\left(
	\mathrm{e}^{\tfrac{t_{n}}{\lambda^{*}\kappa_3}}\cdot E_\eta^n
	+2\sum_{i=1}^{n-1}\mathrm{e}^{\tfrac{t_{i}}{\lambda^{*}\kappa_3}}\cdot E_\eta^i
	+E_\eta^0, \varphi_h\right)
	\quad	\forall\,\varphi_h\in M_h.
\end{align}
Subtracting \eqref{2025-10-20-3} from \eqref{2025-10-19-3}, we obtain
\begin{align}\label{2025-11-4-11}
	&(d_tE_\eta^{n+1},\psi_h) 
	+\frac{1}{\mu_{f}}\left(K(\nabla p(t_{n+\frac{1}{2}}) -\nabla p_h^{n+\frac{1}{2}}), \nabla \psi_h\right)
	\nonumber\\
	&=-\left(\eta_{t}(t_{n+\frac{1}{2}})-\frac{\eta(t_{n+1}) - \eta(t_n)}{\tau},\psi_h\right) 
	\quad
	\forall\,\psi_h\in H^1(\Omega).
\end{align}
Using the properties of projection operators $\mathcal{Q}_h$, $\mathcal{S}_h$, $\mathcal{R}_h$ to simplify \eqref{2025-11-4-1}, \eqref{2025-11-4-10} and \eqref{2025-11-4-11}, for any $(\mathbf{v}_h,\varphi_h,\psi_h)\in \mathbf{V}_h\times M_h \times W_h$, we have
\begin{align}
	&\mu\left(\varepsilon(\Theta_\mathbf{u}^{n+1})+\varepsilon(\Theta_\mathbf{u}^{n}),\varepsilon(\mathbf{v}_h)\right)-\left(\Theta_\xi^{n+1}+\Theta_\xi^{n},\Div\mathbf{v}_h\right)=\left(\Lambda_\xi^{n+1}+\Lambda_\xi^{n},\Div\mathbf{v}_h\right),
	\label{2025-11-5-1}
\end{align}
\begin{align}
&4\lambda^*\kappa_3(\Div \Theta_\mathbf{u}^{n+1}+\Div \Theta_\mathbf{u}^{n},\varphi_h)
+4\lambda^*\kappa_3(\Div \Lambda_\mathbf{u}^{n+1}+\Div \Lambda_\mathbf{u}^{n},\varphi_h)
\nonumber\\[1mm]
&\quad+\kappa_3\tau
\left(\Theta_\xi^{n+1}+\Theta_\xi^n, \varphi_h
\right)
+2\kappa_3\tau\cdot
\mathrm{e}^{-\tfrac{\tau}{2\lambda^{*}\kappa_3}}
(\Theta_\xi^n, \varphi_h)
+(E_{\xi\_{re}}^n,\varphi_h)
\nonumber\\[1mm]
&\quad
+4\kappa_3\tau\cdot
\mathrm{e}^{-\tfrac{t_{n+\frac{1}{2}}}{\lambda^{*}\kappa_3}}\left(
\mathrm{e}^{\tfrac{t_{n}}{\lambda^{*}\kappa_3}}\cdot \Theta_\xi^n
+2\sum_{i=1}^{n-1}\mathrm{e}^{\tfrac{t_{i}}{\lambda^{*}\kappa_3}}\cdot \Theta_\xi^i
+\Theta_\xi^0, \varphi_h\right)
\nonumber\\[1mm]
&=\kappa_1\tau
\left(\Theta_\eta^{n+1}+\Theta_\eta^n, \varphi_h
\right)
+2\kappa_1\tau\cdot
\mathrm{e}^{-\tfrac{\tau}{2\lambda^{*}\kappa_3}}
(\Theta_\eta^n, \varphi_h)
+(E_{\eta\_{re}}^n,\varphi_h)\nonumber\\[3mm]
&\quad
+4\kappa_1\tau\cdot
\mathrm{e}^{-\tfrac{t_{n+\frac{1}{2}}}{\lambda^{*}\kappa_3}}\left(
\mathrm{e}^{\tfrac{t_{n}}{\lambda^{*}\kappa_3}}\cdot \Theta_\eta^n
+2\sum_{i=1}^{n-1}\mathrm{e}^{\tfrac{t_{i}}{\lambda^{*}\kappa_3}}\cdot \Theta_\eta^i
+\Theta_\eta^0, \varphi_h\right),\label{2025-11-5-2}\\[3mm]
	&(d_t\Theta_\eta^{n+1},\psi_h)+\frac{1}{\mu_{f}}(K\nabla\hat{Z}_p^{n+\frac{1}{2}},\nabla\psi_h)	
	=-\frac{\lambda^* \kappa_1}{\mu_{f}}
	(K\nabla R_\mathbf{u}^{n+\frac{1}{2}},\nabla\psi_h)
	-(R_\eta^{n+\frac{1}{2}},\psi_h),
	\label{2025-11-5-9}
\end{align}
where
\begin{align*}
	R^{n+\frac{1}{2}}_\eta 
	&=\eta_{t}(t_{n+\frac{1}{2}})-\frac{\eta(t_{n+1}) - \eta(t_n)}{\tau},\\
	R_\mathbf{u}^{n+\frac{1}{2}}&=\Div\mathbf{u}_{t}(t_{n+\frac{1}{2}})-\frac{\Div\mathbf{u}(t_{n+1}) - \Div\mathbf{u}(t_n)}{\tau},\\
	\hat{Z}_p^{n+\frac{1}{2}}&=\kappa_{1}
	\frac{\hat{\Theta}_\xi^{n+1}+\hat{\Theta}_\xi^{n}}{2}+ \kappa_{2}\frac{\hat{\Theta}_\eta^{n+1}+\hat{\Theta}_\eta^{n}}{2}
	+\lambda^*\kappa_1d_t\hat{\Theta}_q^{n+1}.
\end{align*}

\begin{remark}
	In the error estimation, we assume $\mathbf{u}\in H^2(\Omega)$, therefore $q=\Div\mathbf{u}\in H^1(\Omega)$. 
	From \eqref{2025-11-5-9}, we have
	\begin{align*}
		p(t_{n+\frac{1}{2}}) - p_h^{n+\frac{1}{2}}
		&=
		\kappa_{1}\frac{\xi(t_{n+1}) + \xi(t_{n})}{2}
		-\kappa_1 \frac{\xi_h^{n+1} + \xi_h^n}{2} +\kappa_{2}\frac{\eta(t_{n+1}) + \eta(t_{n})}{2}
		\\
		&\quad
		-\kappa_2 \frac{\eta_h^{n+1} + \eta_h^n}{2}		
		+\lambda^{*}\kappa_{1}\Div\mathbf{u}_{t}(t_{n+\frac{1}{2}})
		-\lambda^* \kappa_1 \frac{\Div\mathbf{u}_h^{n+1} - \Div\mathbf{u}_h^n}{\tau}	\\
		&=\kappa_{1}\frac{\hat{\Theta}_\xi^{n+1} + \hat{\Theta}_\xi^{n}+\hat{\Lambda}_\xi^{n+1} + \hat{\Lambda}_\xi^{n}}{2}
		+\kappa_{2}\frac{\hat{\Theta}_\eta^{n+1} + \hat{\Theta}_\eta^{n}+\hat{\Lambda}_\eta^{n+1} + \hat{\Lambda}_\eta^{n}}{2}
		\\
		&\quad
		+\lambda^* \kappa_1 \frac{\Div\mathbf{u}(t_{n+1}) - \Div\mathbf{u}(t_{n})}{\tau}
		-\lambda^* \kappa_1 \frac{\Div\mathbf{u}_h^{n+1} - \Div\mathbf{u}_h^n}{\tau}	\\
		&\quad
		+\lambda^{*}\kappa_{1}\Div\mathbf{u}_{t}(t_{n+\frac{1}{2}})
		-\lambda^* \kappa_1 \frac{\Div\mathbf{u}(t_{n+1}) - \Div\mathbf{u}(t_{n})}{\tau}	\\
		&=\kappa_{1}\frac{\hat{\Theta}_\xi^{n+1} + \hat{\Theta}_\xi^{n}}{2}
		+\kappa_{2}\frac{\hat{\Theta}_\eta^{n+1} + \hat{\Theta}_\eta^{n}}{2}
		+\lambda^* \kappa_1
		d_t\hat{\Theta}_q^{n+1}
		+\lambda^* \kappa_1 R_\mathbf{u}^{n+\frac{1}{2}}
		\\
		&\quad
		+\kappa_{1}\frac{\hat{\Lambda}_\xi^{n+1} + \hat{\Lambda}_\xi^{n}}{2}
		+\kappa_{2}\frac{\hat{\Lambda}_\eta^{n+1} + \hat{\Lambda}_\eta^{n}}{2}
		+\lambda^* \kappa_1
		d_t\hat{\Lambda}_q^{n+1}\\
		&=\hat{Z}_p^{n+\frac{1}{2}}
		+\lambda^* \kappa_1 R_\mathbf{u}^{n+\frac{1}{2}}
		+\kappa_{1}\frac{\hat{\Lambda}_\xi^{n+1} + \hat{\Lambda}_\xi^{n}}{2}
		+\kappa_{2}\frac{\hat{\Lambda}_\eta^{n+1} + \hat{\Lambda}_\eta^{n}}{2}
		+\lambda^* \kappa_1
		d_t\hat{\Lambda}_q^{n+1},
	\end{align*}
	Using the properties of projection operators $\mathcal{S}_h$, we have
	\begin{align*}
		(K\nabla\hat{\Lambda}_\xi^{n+1}, \nabla \psi_h)=
		(K\nabla\hat{\Lambda}_\eta^{n+1}, \nabla \psi_h)=
		(K\nabla(\lambda^* \kappa_1
		d_t\hat{\Lambda}_q^{n+1}), \nabla \psi_h)=0.
	\end{align*}
\end{remark}
For the convenience of subsequent estimates, we give an equivalent form of $\hat{Z}_p^{n+\frac{1}{2}}$:
\begin{align*}
	\hat{Z}_p^{n+\frac{1}{2}}
	&=\frac{\kappa_{1}}{2}\left(\mathcal{S}_h\xi(t_{n+1})-\xi_h^{n+1}+\mathcal{S}_h\xi(t_{n})-\xi_h^{n}\right)
	+\lambda^*\kappa_1d_t\hat{\Theta}_q^{n+1}\\
	&\quad+\frac{\kappa_2}{2}\left(\mathcal{S}_h\eta(t_{n+1})-\eta_h^{n+1}+\mathcal{S}_h\eta(t_{n})-\eta_h^{n}\right)\\
	&=\frac{\kappa_{1}}{2}(\Theta_\xi^{n+1}+\Theta_\xi^{n}) +\frac{\kappa_{2}}{2}(\Theta_\eta^{n+1}+\Theta_\eta^{n})
	+\lambda^*\kappa_1d_t\hat{\Theta}_q^{n+1}\\
	&\quad+\frac{\kappa_{1}}{2}(\Lambda_\xi^{n+1}-\hat{\Lambda}_\xi^{n+1}+\Lambda_\xi^{n}-\hat{\Lambda}_\xi^{n})
	+\frac{\kappa_{2}}{2}(\Lambda_\eta^{n+1}-\hat{\Lambda}_\eta^{n+1}+\Lambda_\eta^{n}-\hat{\Lambda}_\eta^{n})\nonumber\\
	&=\frac{\kappa_{1}}{2}(\Theta_\xi^{n+1}+\Theta_\xi^{n}) +\frac{\kappa_{2}}{2}(\Theta_\eta^{n+1}+\Theta_\eta^{n})
	+\lambda^*\kappa_1d_t\Div\Theta_\mathbf{u}^{n+1}\\
	&\quad+\frac{\kappa_{1}}{2}(\Lambda_\xi^{n+1}-\hat{\Lambda}_\xi^{n+1}+\Lambda_\xi^{n}-\hat{\Lambda}_\xi^{n})
	+\frac{\kappa_{2}}{2}(\Lambda_\eta^{n+1}-\hat{\Lambda}_\eta^{n+1}+\Lambda_\eta^{n}-\hat{\Lambda}_\eta^{n})\nonumber\\
	&\quad+\lambda^*\kappa_1d_t(\Div\Lambda_{\mathbf{u}}^{n+1}-\hat{\Lambda}_q^{n+1}),
\end{align*}
where
\begin{align*}
	\hat{\Theta}_q^{n+1}
	&=\mathcal{S}_hq(t_{n+1})
	-q_h^{n+1}
	-\Div\mathcal{R}_h\mathbf{u}(t_{n+1})
	+\Div\mathcal{R}_h\mathbf{u}(t_{n+1})\\
	&=\Div \mathcal{R}_h\mathbf{u}(t_{n+1})-\Div\mathbf{u}_h^{n+1}+\Div\Lambda_{\mathbf{u}}^{n+1}-\hat{\Lambda}_q^{n+1}\\
	&=\Div\Theta_{\mathbf{u}}^{n+1}+\Div\Lambda_{\mathbf{u}}^{n+1}-\hat{\Lambda}_q^{n+1}.
\end{align*}
Applying the operator $d_t$ to \eqref{2025-11-5-2}, we obtain
\begin{align}\label{2025-11-5-8}
	&4\lambda^*\kappa_3(d_t\Div(\Theta_\mathbf{u}^{n+1}+ \Theta_\mathbf{u}^{n}),\varphi_h)
	+4\lambda^*\kappa_3(d_t\Div (\Lambda_\mathbf{u}^{n+1}+ \Lambda_\mathbf{u}^{n}),\varphi_h)
	+(d_tE_{\xi\_re}^n,\varphi_h)
	\nonumber\\[1mm]
	&\quad+4\kappa_3\tau\left(
	\mathrm{e}^{-\tfrac{\tau}{2\lambda^{*}\kappa_3}}
	d_t\Theta_\xi^n
	+2\sum_{i=1}^{n-1}
	\mathrm{e}^{-\tfrac{(n+\frac{1}{2}-i)\tau}{\lambda^{*}\kappa_3}}
	d_t\Theta_\xi^i
	+\frac{1}{\tau}(\mathrm{e}^{-\tfrac{t_{n+\frac{1}{2}}}{\lambda^{*}\kappa_3}}
	-
	\mathrm{e}^{-\tfrac{t_{n-\frac{1}{2}}}{\lambda^{*}\kappa_3}}
	)\Theta_\xi^0, \varphi_h\right)
	\nonumber\\[1mm]
	&\quad+\kappa_3\tau
	\left(d_t(\Theta_\xi^{n+1}+\Theta_\xi^n), \varphi_h
	\right)
	+2\kappa_3\tau\cdot
	\mathrm{e}^{-\tfrac{\tau}{2\lambda^{*}\kappa_3}}
	(d_t\Theta_\xi^n, \varphi_h)
	\nonumber\\[1mm]
	&=4\kappa_1\tau\left(
	\mathrm{e}^{-\tfrac{\tau}{2\lambda^{*}\kappa_3}}
	d_t\Theta_\eta^n
	+2\sum_{i=1}^{n-1}
	\mathrm{e}^{-\tfrac{(n+\frac{1}{2}-i)\tau}{\lambda^{*}\kappa_3}}
	d_t\Theta_\eta^i
	+\frac{1}{\tau}(\mathrm{e}^{-\tfrac{t_{n+\frac{1}{2}}}{\lambda^{*}\kappa_3}}
	-
	\mathrm{e}^{-\tfrac{t_{n-\frac{1}{2}}}{\lambda^{*}\kappa_3}}
	)\Theta_\eta^0, \varphi_h\right)\nonumber\\[1mm]
	&\quad+\kappa_1\tau
	\left(d_t(\Theta_\eta^{n+1}+\Theta_\eta^n), \varphi_h
	\right)
	+2\kappa_1\tau\cdot
	\mathrm{e}^{-\tfrac{\tau}{2\lambda^{*}\kappa_3}}
	(d_t\Theta_\eta^n, \varphi_h)
	+(d_tE_{\eta\_re}^n,\varphi_h).
\end{align}
Choosing the test functions in equations \eqref{2025-11-5-1}, \eqref{2025-11-5-8}, and \eqref{2025-11-5-9} as $\mathbf{v}_h=d_t(\Theta_\mathbf{u}^{n+1}+\Theta_\mathbf{u}^{n})$, $\varphi_h= \Theta_\xi^{n+1}+\Theta_\xi^{n}$, and $\psi_h= 2\hat{Z}_p^{n+\frac{1}{2}}$, respectively, we get
\begin{align}
	&\mu\left(\varepsilon(\Theta_\mathbf{u}^{n+1}+\Theta_\mathbf{u}^{n}),d_t\varepsilon(\Theta_\mathbf{u}^{n+1}+\Theta_\mathbf{u}^{n})\right)
	-\left(\Theta_\xi^{n+1}+\Theta_\xi^{n},d_t\Div(\Theta_\mathbf{u}^{n+1}+\Theta_\mathbf{u}^{n})\right)\nonumber\\[1mm]
	&
	=\left(\Lambda_\xi^{n+1}+\Lambda_\xi^{n},d_t\Div(\Theta_\mathbf{u}^{n+1}+\Theta_\mathbf{u}^{n})\right),
	\label{2025-11-6-1}\\
&4\lambda^*\kappa_3(d_t\Div(\Theta_\mathbf{u}^{n+1}+ \Theta_\mathbf{u}^{n}),\Theta_\xi^{n+1}+\Theta_\xi^{n})
+4\lambda^*\kappa_3(d_t\Div (\Lambda_\mathbf{u}^{n+1}+ \Lambda_\mathbf{u}^{n}),\Theta_\xi^{n+1}+\Theta_\xi^{n})
\nonumber\\[1mm]
&\quad
+\kappa_3\tau
\left(d_t(\Theta_\xi^{n+1}+\Theta_\xi^n), \Theta_\xi^{n+1}+\Theta_\xi^{n}
\right)
+(d_tE_{\xi\_re}^n,\Theta_\xi^{n+1}+\Theta_\xi^{n})+R_4
\nonumber\\
&\quad+2\kappa_3\tau\cdot
\mathrm{e}^{-\tfrac{\tau}{2\lambda^{*}\kappa_3}}
(d_t(\Theta_\xi^{n+1}+\Theta_\xi^{n}), \Theta_\xi^{n+1}+\Theta_\xi^{n})
\nonumber\\[1mm]
&=\kappa_1\tau
\left(d_t(\Theta_\eta^{n+1}+\Theta_\eta^n), \Theta_\xi^{n+1}+\Theta_\xi^{n}
\right)
+2\kappa_1\tau\cdot
\mathrm{e}^{-\tfrac{\tau}{2\lambda^{*}\kappa_3}}
(d_t\Theta_\eta^n, \Theta_\xi^{n+1}+\Theta_\xi^{n})\nonumber\\[1mm]
&\quad
+2\kappa_3\tau\cdot
\mathrm{e}^{-\tfrac{\tau}{2\lambda^{*}\kappa_3}}
(d_t\Theta_\xi^{n+1}, \Theta_\xi^{n+1}+\Theta_\xi^{n})
+(d_tE_{\eta\_re}^n,\Theta_\xi^{n+1}+\Theta_\xi^{n})+R_5,
\label{2025-11-6-2}\\[2mm]
	&(d_t\Theta_\eta^{n+1},\kappa_{2}(\Theta_\eta^{n+1}+\Theta_\eta^{n}))
	+(d_t\Theta_\eta^{n+1},\kappa_{1}(\Theta_\xi^{n+1}+\Theta_\xi^{n}))
	+2(d_t\Theta_\eta^{n+1},\lambda^*\kappa_1d_t\Div\Theta_\mathbf{u}^{n+1})
	\nonumber\\[1mm]
	&\quad+(d_t\Theta_\eta^{n+1},\kappa_1(\Lambda_\xi^{n+1}-\hat{\Lambda}_\xi^{n+1}+\Lambda_\xi^{n}-\hat{\Lambda}_\xi^{n})
	+\kappa_{2}(\Lambda_\eta^{n+1}-\hat{\Lambda}_\eta^{n+1}+\Lambda_\eta^{n}-\hat{\Lambda}_\eta^{n}))
	\nonumber\\[1mm]
	&\quad+2(d_t\Theta_\eta^{n+1},\lambda^*\kappa_1d_t(\Div\Lambda_{\mathbf{u}}^{n+1}-\hat{\Lambda}_q^{n+1}))
	+\frac{2}{\mu_{f}}(K\nabla\hat{Z}_p^{n+\frac{1}{2}},\nabla\hat{Z}_p^{n+\frac{1}{2}})
		\nonumber\\[1mm]
	&=
	-\frac{2\lambda^* \kappa_1}{\mu_{f}}
	(K\nabla R_\mathbf{u}^{n+\frac{1}{2}},\nabla\hat{Z}_p^{n+\frac{1}{2}})
	-2(R_\eta^{n+\frac{1}{2}},\hat{Z}_p^{n+\frac{1}{2}})
	,\label{2025-11-6-3}
\end{align}
where
\begin{align*}
	R_4&=4\kappa_3\tau\left(
	\mathrm{e}^{-\tfrac{\tau}{2\lambda^{*}\kappa_3}}
	d_t\Theta_\xi^n
	+2\sum_{i=1}^{n-1}
	\mathrm{e}^{-\tfrac{(n+\frac{1}{2}-i)\tau}{\lambda^{*}\kappa_3}}
	d_t\Theta_\xi^i
	+\frac{1}{\tau}(\mathrm{e}^{-\tfrac{t_{n+\frac{1}{2}}}{\lambda^{*}\kappa_3}}
	-
	\mathrm{e}^{-\tfrac{t_{n-\frac{1}{2}}}{\lambda^{*}\kappa_3}}
	)\Theta_\xi^0, \Theta_\xi^{n+1}+\Theta_\xi^{n}\right),\\
	R_5&=4\kappa_1\tau\left(
	\mathrm{e}^{-\tfrac{\tau}{2\lambda^{*}\kappa_3}}
	d_t\Theta_\eta^n
	+2\sum_{i=1}^{n-1}
	\mathrm{e}^{-\tfrac{(n+\frac{1}{2}-i)\tau}{\lambda^{*}\kappa_3}}
	d_t\Theta_\eta^i
	+\frac{1}{\tau}(\mathrm{e}^{-\tfrac{t_{n+\frac{1}{2}}}{\lambda^{*}\kappa_3}}
	-
	\mathrm{e}^{-\tfrac{t_{n-\frac{1}{2}}}{\lambda^{*}\kappa_3}}
	)\Theta_\eta^0, \Theta_\xi^{n+1}+\Theta_\xi^{n}\right).
\end{align*}
To eliminate the cross terms, reducing the time level of \eqref{2025-11-5-9} to level $n$, and still choosing the test function $\psi_h= 2\hat{Z}_p^{n+\frac{1}{2}}$, we obtain
\begin{align}
	&(d_t\Theta_\eta^{n},\kappa_{2}(\Theta_\eta^{n+1}+\Theta_\eta^{n}))
	+(d_t\Theta_\eta^{n},\kappa_{1}(\Theta_\xi^{n+1}+\Theta_\xi^{n}))
	+2(d_t\Theta_\eta^{n},\lambda^*\kappa_1d_t\Div\Theta_\mathbf{u}^{n+1})
	\nonumber\\[1mm]
	&\quad+(d_t\Theta_\eta^{n},\kappa_1(\Lambda_\xi^{n+1}-\hat{\Lambda}_\xi^{n+1}+\Lambda_\xi^{n}-\hat{\Lambda}_\xi^{n})
	+\kappa_{2}(\Lambda_\eta^{n+1}-\hat{\Lambda}_\eta^{n+1}+\Lambda_\eta^{n}-\hat{\Lambda}_\eta^{n}))
	\nonumber\\[1mm]
	&\quad+2(d_t\Theta_\eta^{n},\lambda^*\kappa_1d_t(\Div\Lambda_{\mathbf{u}}^{n+1}-\hat{\Lambda}_q^{n+1}))
	+\frac{2}{\mu_{f}}\left(K\nabla\hat{Z}_p^{n-\frac{1}{2}},\nabla\hat{Z}_p^{n+\frac{1}{2}}\right)
	\nonumber\\[1mm]
	&=
	-\frac{2\lambda^* \kappa_1}{\mu_{f}}
	(K\nabla R_\mathbf{u}^{n-\frac{1}{2}},\nabla\hat{Z}_p^{n+\frac{1}{2}})
	-2(R_\eta^{n-\frac{1}{2}},\hat{Z}_p^{n+\frac{1}{2}})
	.\label{2026-1-24-1}
\end{align}
First, multiplying \eqref{2025-11-6-1} by $4\lambda^*\kappa_3$, then adding it with \eqref{2025-11-6-2}-\eqref{2026-1-24-1}, and finally applying the summation operator $\tau\sum_{n=0}^{l}$, we get
\begin{align}\label{2025-11-6-4}
	&4\mu\lambda^*\kappa_3\tau\sum_{n=0}^{l}\left(d_t\varepsilon(\Theta_\mathbf{u}^{n+1}+\Theta_\mathbf{u}^{n}),\varepsilon(\Theta_\mathbf{u}^{n+1}+\Theta_\mathbf{u}^{n})\right)
	+\kappa_3\tau^2\sum_{n=0}^{l}
	\left(d_t(\Theta_\xi^{n+1}+\Theta_\xi^n), \Theta_\xi^{n+1}+\Theta_\xi^{n}
	\right)\nonumber\\
	&\quad+\kappa_{2}\tau\sum_{n=0}^{l}
	(d_t(\Theta_\eta^{n+1}+\Theta_\eta^{n}),\Theta_\eta^{n+1}+\Theta_\eta^{n})
	+\kappa_1\tau\sum_{n=0}^{l}
	(d_t(\Theta_\eta^{n+1}+\Theta_\eta^{n}),\Theta_\xi^{n+1}+\Theta_\xi^{n})
	\nonumber\\
	&\quad
	+2\tau\sum_{n=0}^{l}
	(d_t(\Theta_\eta^{n+1}+\Theta_\eta^{n}),\lambda^*\kappa_1d_t\Div\Theta_{\mathbf{u}}^{n+1})
	+\frac{2\tau}{\mu_f}\sum_{n=0}^{l}
	(K\nabla\hat{Z}_p^{n+\frac{1}{2}},\nabla\hat{Z}_p^{n+\frac{1}{2}})
	\nonumber\\
	&\quad+2\kappa_3\tau^2
	\mathrm{e}^{-\tfrac{\tau}{2\lambda^{*}\kappa_3}}
	\sum_{n=0}^{l}
	(d_t(\Theta_\xi^{n+1}+\Theta_\xi^{n}), \Theta_\xi^{n+1}+\Theta_\xi^{n})
	=\sum_{j=1}^{12}\Phi_j,
\end{align}
Applying the identity \eqref{2025-10-30-9} and the identity
\begin{align*}
	\kappa_1\tau\sum_{n=0}^{l}(d_t(\Theta_\eta^{n+1}+\Theta_\eta^{n}),(\Theta_\xi^{n+1}+\Theta_\xi^{n}))
	&=\tau\sum_{n=0}^{l}(
	d_t\Div(\Theta_\mathbf{u}^{n+1}+\Theta_\mathbf{u}^{n}),(\Theta_\xi^{n+1}+\Theta_\xi^{n}))
	\nonumber\\
	&\quad
	+\lambda^*\kappa_3\tau\sum_{n=0}^{l}
	(d_t(d_t\Div(\Theta_\mathbf{u}^{n+1}+\Theta_\mathbf{u}^{n})),(\Theta_\xi^{n+1}+\Theta_\xi^{n}))\\
	&\quad+\kappa_3\tau\sum_{n=0}^{l}
	(d_t(\Theta_\xi^{n+1}+\Theta_\xi^{n}),(\Theta_\xi^{n+1}+\Theta_\xi^{n})),
\end{align*}
to both sides of \eqref{2025-11-6-4}, we obtain \eqref{2025-11-7-1}.
\end{proof}

Before estimating the errors, we first give a lemma about $\|R_\eta^{n+1}\|_{H^{-1}(\Omega)}$ and $\|R_\mathbf{u}^{n+1}\|_0$.
\begin{lemma}\label{2025-11-6-5}
	The following inequalities hold:
	\begin{align}
		\|R^{n+\frac{1}{2}}_\eta\|_0^2
		&\leq C\tau^3\int_{t_{n}}^{t_{n+1}} \|\eta_{ttt}(s)\|_0^2  \, \mathrm{~d}s,\label{2026-1-24-2}\\
		\|R_\mathbf{u}^{n+\frac{1}{2}}\|_0^2&\leq C\tau^3\int_{t_{n}}^{t_{n+1}}\|\Div \mathbf{u}_{ttt}(s)\|_0^2\mathrm{~d}s,\label{2026-1-24-3}\\
		\|\nabla R_\mathbf{u}^{n+\frac{1}{2}}\|_0^2&\leq C\tau^3\int_{t_{n}}^{t_{n+1}}\|\nabla\Div \mathbf{u}_{ttt}(s)\|_0^2\mathrm{~d}s.\label{2026-1-24-4}		
	\end{align}
\end{lemma}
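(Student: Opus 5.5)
The plan is to prove all three bounds by the same Taylor-expansion-with-integral-remainder argument about the midpoint $t_{n+\frac12}$. The key fact is that the centered difference quotient $\frac{w(t_{n+1})-w(t_n)}{\tau}$ approximates $w_t(t_{n+\frac12})$ to second order. The residual is controlled by $w_{ttt}$. Here $w$ is $\eta$, $\Div\mathbf{u}$, or $\nabla\Div\mathbf{u}$. Since $\nabla$ and $\Div$ commute with time differentiation and with the difference quotient, (2026-1-24-4) follows from (2026-1-24-3) by applying the same argument to $\nabla\Div\mathbf{u}$. So it suffices to treat a generic function $w$ with values in $L^2(\Omega)$.

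First, expand $w(t_{n+1})$ and $w(t_n)$ around $t_{n+\frac12}$ with Taylor's formula and integral remainder of order three:
\begin{align*}
w(t_{n+1})&=w(t_{n+\frac12})+\tfrac{\tau}{2}w_t(t_{n+\frac12})+\tfrac{\tau^2}{8}w_{tt}(t_{n+\frac12})+\tfrac12\int_{t_{n+\frac12}}^{t_{n+1}}(t_{n+1}-s)^2w_{ttt}(s)\,\mathrm{d}s,\\
w(t_{n})&=w(t_{n+\frac12})-\tfrac{\tau}{2}w_t(t_{n+\frac12})+\tfrac{\tau^2}{8}w_{tt}(t_{n+\frac12})-\tfrac12\int_{t_n}^{t_{n+\frac12}}(s-t_n)^2w_{ttt}(s)\,\mathrm{d}s.
\end{align*}
Subtracting cancels the zeroth- and second-order terms. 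This gives
\begin{align*}
w_t(t_{n+\frac12})-\frac{w(t_{n+1})-w(t_n)}{\tau}=-\frac{1}{2\tau}\int_{t_n}^{t_{n+1}}k(s)\,w_{ttt}(s)\,\mathrm{d}s,
\end{align*}
where the kernel satisfies $|k(s)|\le \tau^2/4$.

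Next, take the $L^2(\Omega)$ norm. Minkowski's integral inequality moves the norm inside the time integral. The Cauchy--Schwarz inequality in time then gives
\begin{align*}
\Big\|\int_{t_n}^{t_{n+1}}k\,w_{ttt}\,\mathrm{d}s\Big\|_0^2\le \int_{t_n}^{t_{n+1}}k(s)^2\,\mathrm{d}s\int_{t_n}^{t_{n+1}}\|w_{ttt}\|_0^2\,\mathrm{d}s\le C\tau^5\int_{t_n}^{t_{n+1}}\|w_{ttt}\|_0^2\,\mathrm{d}s.
\end{align*}
Multiplying by the prefactor $(2\tau)^{-2}$ yields the claimed factor $C\tau^3$.

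Setting $w=\eta$ gives (2026-1-24-2), $w=\Div\mathbf{u}$ gives (2026-1-24-3), and $w=\nabla\Div\mathbf{u}$ gives (2026-1-24-4). The only delicate point is regularity: the argument needs $\eta_{ttt}\in L^2(t_n,t_{n+1};L^2(\Omega))$ and $\nabla\Div\mathbf{u}_{ttt}\in L^2(t_n,t_{n+1};L^2(\Omega))$, so that the Bochner-valued Taylor formula is valid. I would state this as an assumption on the exact solution, consistent with the regularity already assumed for the error analysis. Note that an $H^{-1}$ bound for $R_\eta$, as mentioned before the lemma, would follow trivially from the $L^2$ bound.
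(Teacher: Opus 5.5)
Your proposal is correct and follows the same route as the paper: Taylor expansion with integral remainder about $t_{n+\frac12}$, cancellation of the zeroth- and second-order terms, and a Cauchy--Schwarz estimate in time, with the other two bounds obtained by the identical argument applied to $\Div\mathbf{u}$ and $\nabla\Div\mathbf{u}$. The paper bounds the two half-interval remainders separately; your single kernel combined with Minkowski's inequality is slightly cleaner and yields the constant $\tau^3/320$. The paper's stated $\tau^3/640$ omits the factor $2$ from $(a+b)^2\le 2a^2+2b^2$, which is immaterial since $C$ is generic.
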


\begin{proof}
	Using Taylor's formula with integral remainder to expand $\eta(t_{n+1})$ and $\eta(t_n)$ at $t_{n+\frac{1}{2}}$, we obtain
	\begin{align*}
		\eta(t_{n+1}) &= \eta(t_{n+\frac{1}{2}}) + \frac{\tau}{2}\eta_t(t_{n+\frac{1}{2}}) + \frac{\tau^2}{8}\eta_{tt}(t_{n+\frac{1}{2}})
		+\frac{1}{2}\int_{t_{n+\frac{1}{2}}}^{t_{n+1}} (t_{n+1} - s)^2 \eta_{ttt}(s) \, \mathrm{~d}s,\\	
		\eta(t_n) &= \eta(t_{n+\frac{1}{2}}) - \frac{\tau}{2}\eta_t(t_{n+\frac{1}{2}}) + \frac{\tau^2}{8}\eta_{tt}(t_{n+\frac{1}{2}})
		-\frac{1}{2}\int_{t_n}^{t_{n+\frac{1}{2}}} (t_n - s)^2 \eta_{ttt}(s) \, \mathrm{~d}s,
	\end{align*}
	then
	\begin{align*}
		R^{n+\frac{1}{2}}_\eta 
		&=\eta_{t}(t_{n+\frac{1}{2}})-\frac{\eta(t_{n+1}) - \eta(t_n)}{\tau}\nonumber\\
		&= -\frac{1}{2\tau} \int_{t_{n+\frac{1}{2}}}^{t_{n+1}} (t_{n+1} - s)^2 \eta_{ttt}(s) \, \mathrm{~d}s  -\frac{1}{2\tau}\int_{t_n}^{t_{n+\frac{1}{2}}} (s - t_n)^2 \eta_{ttt}(s) \, \mathrm{~d}s\nonumber\\
		&\leq\frac{1}{2\tau}\left( \int_{t_{n+\frac{1}{2}}}^{t_{n+1}} (t_{n+1} - s)^4  \, \mathrm{~d}s \right)^{\frac{1}{2}}\cdot
		\left( \int_{t_{n+\frac{1}{2}}}^{t_{n+1}} \|\eta_{ttt}(s)\|_0^2  \, \mathrm{~d}s \right)^{\frac{1}{2}}\nonumber\\
		&\quad+
		\frac{1}{2\tau}\left( \int_{t_n}^{t_{n+\frac{1}{2}}} (s-t_{n})^4  \, \mathrm{~d}s \right)^{\frac{1}{2}}\cdot
		\left( \int_{t_n}^{t_{n+\frac{1}{2}}} \|\eta_{ttt}(s)\|_0^2  \, \mathrm{~d}s \right)^{\frac{1}{2}},\nonumber
	\end{align*}
	squaring both sides, we get
	\begin{align*}
		\|R^{n+\frac{1}{2}}_\eta\|_0^2
		&\leq
		\frac{\tau^3}{640}\int_{t_{n+\frac{1}{2}}}^{t_{n+1}} \|\eta_{ttt}(s)\|_0^2  \, \mathrm{~d}s
		+\frac{\tau^3}{640}\int_{t_n}^{t_{n+\frac{1}{2}}} \|\eta_{ttt}(s)\|_0^2  \, \mathrm{~d}s\nonumber\\
		&\leq\frac{\tau^3}{640}\int_{t_{n}}^{t_{n+1}} \|\eta_{ttt}(s)\|_0^2  \, \mathrm{~d}s.
	\end{align*}	
	Inequality \eqref{2026-1-24-2} is proved. Similarly, we can prove \eqref{2026-1-24-3} and \eqref{2026-1-24-4}.
\end{proof}

Next, we estimate \eqref{2025-11-7-1} in Theorem \ref{th-2025-11-7-1}. According to \eqref{2025-11-14-3} and \eqref{2025-11-14-4}, we have
\begin{align}\label{2025-11-14-5}
	\|d_tE_{\xi\_re}^n\|_0\leq C\kappa_3\tau^2\|\xi_{ttt}\|_{L^{\infty}(0,T;L^2(\Omega))},
	\quad 
	\|d_tE_{\eta\_re}^n\|_0\leq C\kappa_1\tau^2\|\eta_{ttt}\|_{L^{\infty}(0,T;L^2(\Omega))},
\end{align}
Using \eqref{2025-11-14-5} and the Cauchy-Schwarz inequality, we obtain
\begin{align}
	\Phi_1&=\tau\sum_{n=0}^{l}\left[
	-(d_tE_{\xi\_re}^n,\Theta_\xi^{n+1}+\Theta_\xi^{n})+(d_tE_{\eta\_re}^n,\Theta_\xi^{n+1}+\Theta_\xi^{n})\right]
	\nonumber\\
	&\leq \tau\sum_{n=0}^{l}(\|d_tE_{\xi\_re}^n\|_0+\|d_tE_{\eta\_re}^n\|_0)\cdot\|\Theta_\xi^{n+1}+\Theta_\xi^{n}\|_0\nonumber\\
	&\leq C\kappa_3^2\tau^4\|\xi_{ttt}\|^2_{L^{\infty}(0,T;L^2(\Omega))}
	+C\kappa_1^2\tau^4\|\eta_{ttt}\|^2_{L^{\infty}(0,T;L^2(\Omega))}+\tau\sum_{n=0}^{l}\|\Theta_\xi^{n+1}+\Theta_\xi^{n}\|_0^2.
\end{align}
Using the summation by parts formula, the Cauchy-Schwarz inequality, and the properties of projection operators $\mathcal{S}_h$, we have
\begin{align}
	\Phi_2
	&=-\kappa_1\tau\sum_{n=0}^{l}(d_t(\Theta_\eta^{n+1}+\Theta_\eta^{n}),\Lambda_\xi^{n+1}-\hat{\Lambda}_\xi^{n+1}+\Lambda_\xi^{n}-\hat{\Lambda}_\xi^{n})\nonumber\\
	&=\kappa_1(\Theta_\eta^{l+1}+\Theta_\eta^{l},\hat{\Lambda}_\xi^{l+1}+\hat{\Lambda}_\xi^{l})
	-\kappa_1(\Theta_\eta^{0}+\Theta_\eta^{-1},\hat{\Lambda}_\xi^{1}+\hat{\Lambda}_\xi^{0})\nonumber\\
	&\quad
	-\kappa_1\tau\sum_{n=1}^{l}(\Theta_\eta^{n}+\Theta_\eta^{n-1},d_t(\hat{\Lambda}_\xi^{n+1}+\hat{\Lambda}_\xi^{n}))\nonumber\\
	&\leq\frac{\kappa_1}{2}\|\Theta_\eta^{l+1}+\Theta_\eta^{l}\|_0^2
	+\frac{\kappa_1}{2}\|\hat{\Lambda}_\xi^{l+1}+\hat{\Lambda}_\xi^{l}\|_0^2
	+\frac{\kappa_1}{2}\|\Theta_\eta^{0}+\Theta_\eta^{-1}\|_0^2
	+\frac{\kappa_1}{2}\|\hat{\Lambda}_\xi^{1}+\hat{\Lambda}_\xi^{0}\|_0^2
	\nonumber\\
	&\quad+\frac{\kappa_1\tau}{2}\sum_{n=1}^{l}\|\Theta_\eta^{n}+\Theta_\eta^{n-1}\|_0^2
	+\frac{\kappa_1\tau}{2}\sum_{n=1}^{l}(\|d_t\hat{\Lambda}_\xi^{n+1}\|_0^2+\|d_t\hat{\Lambda}_\xi^{n}\|_0^2)\nonumber\\
	&\leq C\kappa_1\tau\sum_{n=0}^{l+1}\|\Theta_\eta^{n}+\Theta_\eta^{n-1}\|_0^2
	+C\kappa_1h^{2(r+1)}\|\xi_t\|^2_{L^2(0,T;H^{r+1}(\Omega))}.
\end{align}
Similarly, estimating $\Phi_3$, we get
\begin{align}
	\Phi_3&=-\kappa_2\tau\sum_{n=0}^{l}(d_t(\Theta_\eta^{n+1}+\Theta_\eta^{n}),\Lambda_\eta^{n+1}-\hat{\Lambda}_\eta^{n+1}+\Lambda_\eta^{n}-\hat{\Lambda}_\eta^{n})\nonumber\\	
	&\leq C\kappa_2\tau\sum_{n=0}^{l+1}\|\Theta_\eta^{n}+\Theta_\eta^{n-1}\|_0^2
	+	C\kappa_2h^{2(r+1)}\|\eta_t\|^2_{L^2(0,T;H^{r+1}(\Omega))}.
\end{align}
Using the Cauchy-Schwarz inequality, Korn's inequality, Young's inequality, and the properties of projection operators $\mathcal{R}_h$, $\mathcal{S}_h$, we obtain
\begin{align}
	\Phi_4&=-2\lambda^*\kappa_1\tau\sum_{n=0}^{l}
	(d_t(\Theta_\eta^{n+1}+\Theta_\eta^{n}),d_t(\Div\Lambda_{\mathbf{u}}^{n+1}-\hat{\Lambda}_q^{n+1}))\nonumber\\
	&\leq 4\lambda^*\kappa_1\tau\sum_{n=0}^{l}\|d_t(\Theta_\eta^{n+1}+\Theta_\eta^{n})\|_0\cdot(\|d_t\Div\Lambda_{\mathbf{u}}^{n+1}\|_0+\|d_t\hat{\Lambda}_q^{n+1}\|_0)\nonumber\\
	&\leq
	\kappa_1\tau\sum_{n=0}^{l}\|\Theta_\eta^{n+1}+\Theta_\eta^{n}\|_0^2
	+ C\lambda^*\kappa_1h^{2(r+1)}\|\Div\mathbf{u}_{tt}\|^2_{L^2(0,T;H^{r+1}(\Omega))}.
\end{align}
Using the Cauchy-Schwarz inequality, Korn's inequality, Young's inequality and Lemma \ref{2025-11-6-5}, we get
\begin{align}
	\Phi_5&=-\frac{2\lambda^*\kappa_1}{\mu_{f}}\cdot\tau\sum_{n=0}^{l}
	(K\nabla R_\mathbf{u}^{n+\frac{1}{2}}+K\nabla R_\mathbf{u}^{n-\frac{1}{2}},\nabla\hat{Z}_p^{n+\frac{1}{2}})
	\nonumber\\
	&\leq \frac{2(\lambda^*\kappa_1)^2K_2^2\tau}{\mu_{f}K_1}
	\sum_{n=0}^{l}\|\nabla R_\mathbf{u}^{n+\frac{1}{2}}\|_0^2
	+\frac{K_1\tau}{\mu_f}\sum_{n=0}^{l}\|\nabla\hat{Z}_p^{n+\frac{1}{2}}\|_0^2\nonumber\\
	&\leq\frac{(\lambda^*\kappa_1)^2K_2^2\tau^4}{12\mu_{f}K_1}\|\nabla\Div \mathbf{u}_{ttt}\|^2_{L^2(0,T;L^2(\Omega))}
	+\frac{K_1\tau}{\mu_f}\sum_{n=0}^{l}\|\nabla\hat{Z}_p^{n+\frac{1}{2}}\|_0^2.
\end{align}
Using the Cauchy-Schwarz inequality, we have
\begin{align}
	\Phi_6&=-4\lambda^*\kappa_3\tau\sum_{n=0}^{l}
	(d_t\Div (\Lambda_\mathbf{u}^{n+1}+ \Lambda_\mathbf{u}^{n}),\Theta_\xi^{n+1}+\Theta_\xi^{n})\nonumber\\
	&\leq 2\lambda^*\kappa_3\tau\sum_{n=0}^{l}\|d_t\Div (\Lambda_\mathbf{u}^{n+1}+ \Lambda_\mathbf{u}^{n})\|_0^2
	+2\lambda^*\kappa_3\tau\sum_{n=0}^{l}\|\Theta_\xi^{n+1}+\Theta_\xi^{n}\|_0^2\nonumber\\
	&\leq Ch^{2(r+1)}\|\Div\mathbf{u}_t\|^2_{L^2(0,T;H^{r+1}(\Omega))}
	+2\lambda^*\kappa_3\tau\sum_{n=0}^{l}\|\Theta_\xi^{n+1}+\Theta_\xi^{n}\|_0^2.
\end{align}
Using the Cauchy-Schwarz inequality, Korn's inequality, Young's inequality and the properties of projection operators, we get
\begin{align}
	\Phi_7&=4\lambda^*\kappa_3\tau\sum_{n=0}^{l}
	\left(\Lambda_\xi^{n+1}+\Lambda_\xi^{n},d_t\Div(\Theta_\mathbf{u}^{n+1}+\Theta_\mathbf{u}^{n})\right)
	\nonumber\\
	&\leq Ch^{2(r+1)}\tau\sum_{n=0}^{l}
	\|\xi_t\|^2_{H^{r+1}(\Omega)}
	+\frac{\lambda^*\kappa_3\tau^2}{2}\sum_{n=0}^{l}
	\|d_t\varepsilon(\Theta_\mathbf{u}^{n+1}+\Theta_\mathbf{u}^{n})\|_0^2\nonumber\\
	&\leq Ch^{2(r+1)}
	\|\xi_t\|^2_{L^2(0,T;H^{r+1}(\Omega))}
	+\frac{\lambda^*\kappa_3\tau}{2}\sum_{n=0}^{l}
	\|d_t\varepsilon(\Theta_\mathbf{u}^{n+1}+\Theta_\mathbf{u}^{n})\|_0^2.
\end{align}
Using Young's inequality, we obtain
\begin{align}
	\Phi_8&=2\kappa_3\tau^2\cdot
	\mathrm{e}^{-\tfrac{\tau}{2\lambda^{*}\kappa_3}}
	\sum_{n=0}^{l}
	(d_t\Theta_\xi^{n+1}, \Theta_\xi^{n+1}+\Theta_\xi^{n})\nonumber\\	
	&\leq
	\frac{\kappa_3\tau^2}{4}\sum_{n=1}^{l}\|d_t (\Theta_\xi^{n+1}+\Theta_\xi^{n})\|_0^2
	+\kappa_3\tau\sum_{n=0}^{l}\|\Theta_\xi^{n+1}+\Theta_\xi^{n}\|_0^2.
\end{align}
Similarly, using Young's inequality, we obtain
\begin{align}
	\Phi_{9}
	&=2\kappa_1\tau^2\cdot
	\mathrm{e}^{-\tfrac{\tau}{2\lambda^{*}\kappa_3}}\sum_{n=1}^{l}
	(d_t\Theta_\eta^n, \Theta_\xi^{n+1}+\Theta_\xi^{n})\nonumber\\
	&\leq
	\frac{\kappa_1\tau^2}{4}
	 \sum_{n=0}^{l}\|\Theta_\eta^{n}+\Theta_\eta^{n-1}\|_0^2
	+\kappa_1\tau\sum_{n=0}^{l}\| \Theta_\xi^{n+1}+\Theta_\xi^{n}\|_0^2.
\end{align}
Using Young's inequality, Poincar\'e's inequality and Lemma \ref{2025-11-6-5}, we get
\begin{align}
	\Phi_{10}&=-2\tau\sum_{n=0}^{l}(R_\eta^{n+\frac{1}{2}}+R_\eta^{n-\frac{1}{2}},\hat{Z}_p^{n+\frac{1}{2}})\nonumber\\
	&\leq C\tau^4\|\eta_{ttt}(s)\|^2_{L^2(0,T;L^2(\Omega))}
	+\frac{K_1\tau}{\mu_f}\sum_{n=0}^{l}\|\hat{Z}_p^{n+\frac{1}{2}}\|_0^2\nonumber\\
	&\leq C\tau^4\|\eta_{ttt}(s)\|^2_{L^2(0,T;L^2(\Omega))}
	+C\tau\sum_{n=0}^{l}\|\nabla\hat{Z}_p^{n+\frac{1}{2}}\|_0^2.
\end{align}
Since $K$ is symmetric positive definite, using the Cauchy-Schwarz inequality, we have
\begin{align}
	\Phi_{11}&=-\frac{2\tau}{\mu_f}\sum_{n=0}^{l}
	(K\nabla\hat{Z}_p^{n-\frac{1}{2}},\nabla\hat{Z}_p^{n+\frac{1}{2}})\nonumber\\
	&\leq
	\frac{2\tau}{\mu_f} \sum_{n=0}^{l}
	\sqrt{\left(K \nabla \hat{Z}_p^{n-\frac{1}{2}},\nabla \hat{Z}_p^{n-\frac{1}{2}}\right)}
	\sqrt{\left(K \nabla \hat{Z}_p^{n+\frac{1}{2}},\nabla \hat{Z}_p^{n+\frac{1}{2}}\right)}\nonumber\\
	&\leq
	\frac{\tau}{\mu_f} \sum_{n=0}^{l}
	\left(K \nabla \hat{Z}_p^{n-\frac{1}{2}},\nabla \hat{Z}_p^{n-\frac{1}{2}}\right)
	+\frac{\tau}{\mu_f} \sum_{n=0}^{l}
	\left(K \nabla \hat{Z}_p^{n+\frac{1}{2}},\nabla \hat{Z}_p^{n+\frac{1}{2}}\right)\nonumber\\
	&\leq\frac{2\tau}{\mu_f} \sum_{n=0}^{l}
	\left(K \nabla \hat{Z}_p^{n+\frac{1}{2}},\nabla \hat{Z}_p^{n+\frac{1}{2}}\right).
\end{align}
Using the Cauchy-Schwarz inequality and Young's inequality, we obtain
\begin{align}
	\Phi_{12}
	&\leq 4\kappa_3\tau^2\sum_{n=0}^{l}
	\left(
	d_t\Theta_\xi^n
	+2\sum_{i=1}^{n-1}d_t\Theta_\xi^i
	+d_t\Theta_\xi^0,\Theta_\xi^{n+1}+\Theta_\xi^{n}
	\right)\nonumber\\
	&\quad+4\kappa_1\tau^2\sum_{n=0}^{l}
	\left(
	d_t\Theta_\eta^n
	+2\sum_{i=1}^{n-1}d_t\Theta_\eta^i
	+d_t\Theta_\eta^0,\Theta_\xi^{n+1}+\Theta_\xi^{n}
	\right)\nonumber\\
	&\leq
	C\tau^2\sum_{n=0}^{l}(d_t(\|\Theta_\xi^{n}+\Theta_\xi^{n-1}\|_0^2)
	+d_t(\|\Theta_\eta^{n}+\Theta_\eta^{n-1}\|_0^2))\nonumber\\
	&\quad+C\tau\sum_{n=0}^{l}(\|\Theta_\xi^{n+1}+\Theta_\xi^{n}\|_0^2
	+\|\Theta_\eta^{n+1}+\Theta_\eta^{n}\|_0^2).
\end{align}
The treatment of the left-hand side terms in \eqref{2025-11-7-1} is as follows:
Using the summation by parts formula, the properties of projection operators and \eqref{2025-10-20-1}, we obtain
\begin{align}\label{2026-3-23-1}
	&\lambda^*\kappa_3\tau\sum_{n=0}^{l}
	(d_t(d_t\Div(\Theta_\mathbf{u}^{n+1}+\Theta_\mathbf{u}^{n})),(\Theta_\xi^{n+1}+\Theta_\xi^{n}))
	\nonumber\\
	&= \lambda^*\kappa_3
	(d_t\Div(\Theta_\mathbf{u}^{l+1}+\Theta_\mathbf{u}^{l}),(\Theta_\xi^{l+1}+\Theta_\xi^{l}))
	-\lambda^*\kappa_3\tau\sum_{n=0}^{l}
	(d_t\Div(\Theta_\mathbf{u}^{n}+\Theta_\mathbf{u}^{n-1}),d_t(\Theta_\xi^{n+1}+\Theta_\xi^{n}))\nonumber\\
	&=\lambda^*\kappa_3
	\left[(\varepsilon(\Theta_{\mathbf{u}}^{l+1}+\Theta_{\mathbf{u}}^{l}),d_t\varepsilon(\Theta_{\mathbf{u}}^{l+1}+\Theta_{\mathbf{u}}^{l}))
	-(\Lambda_\xi^{l+1}+\Lambda_\xi^{l},d_t\Div(\Theta_{\mathbf{u}}^{l+1}+\Theta_{\mathbf{u}}^{l}))
	\right]\nonumber\\
	&\quad +\lambda^*\kappa_3\tau\sum_{n=0}^{l}
	\left[	
	(d_t(\Lambda_\xi^{n+1}+\Lambda_\xi^{n}),d_t\Div(\Theta_{\mathbf{u}}^{n}+\Theta_{\mathbf{u}}^{n-1}))
	-(d_t\varepsilon(\Theta_{\mathbf{u}}^{n+1}+\Theta_{\mathbf{u}}^{n}),d_t\varepsilon(\Theta_{\mathbf{u}}^{n+1}+\Theta_{\mathbf{u}}^{n}))
	\right]\nonumber\\
	&\quad+\lambda^*\kappa_3\tau^2\sum_{n=0}^{l}
	(d_t\varepsilon(\Theta_{\mathbf{u}}^{n+1}+\Theta_{\mathbf{u}}^{n}),d_t^2\varepsilon(\Theta_{\mathbf{u}}^{n+1}+\Theta_{\mathbf{u}}^{n})).
\end{align}
Using \eqref{2025-10-20-1} and the properties of projection operators, we have
\begin{align}\label{2026-3-23-2}
	&\tau\sum_{n=0}^{l}(
	d_t\Div(\Theta_\mathbf{u}^{n+1}+\Theta_\mathbf{u}^{n}),(\Theta_\xi^{n+1}+\Theta_\xi^{n}))\nonumber\\
	&=\tau\sum_{n=0}^{l}
	\left[
	(\varepsilon(\Theta_{\mathbf{u}}^{n+1}+\Theta_{\mathbf{u}}^{n}),d_t\varepsilon(\Theta_{\mathbf{u}}^{n+1}+\Theta_{\mathbf{u}}^{n}))
	-(\Lambda_\xi^{n+1}+\Lambda_\xi^{n}, d_t\Div(\Theta_\mathbf{u}^{n+1}+\Theta_\mathbf{u}^{n}))
	\right]
\end{align}
From \eqref{2026-3-23-1}-\eqref{2026-3-23-2}, we can see that the left-hand side terms in equation \eqref{2025-11-7-1} can be transformed into a sum of positive norms, while the negative norms can be controlled by moving to the right-hand side.

Substituting the estimates of $\Phi_1,~\Phi_2,\cdots,\Phi_{12}$ into \eqref{2025-11-7-1} in Theorem \ref{th-2025-11-7-1}, combining like terms and using the discrete Gronwall's inequality, we obtain
\begin{align}\label{2025-11-15-1}
	&2\mu\lambda^*\kappa_3\|\varepsilon(\Theta_\mathbf{u}^{l+1}+\Theta_\mathbf{u}^{l})\|_0^2
	+\frac{\kappa_3}{2}\|\Theta_\xi^{l+1}+\Theta_\xi^l\|_0^2
	+\frac{\kappa_{2}}{2}
	\|\Theta_\eta^{l+1}+\Theta_\eta^{l}\|_0^2
	+\frac{\tau}{\mu_f}\sum_{n=0}^{l}\left(K\nabla\hat{Z}_p^{n+\frac{1}{2}},\nabla\hat{Z}_p^{n+\frac{1}{2}}\right)\nonumber\\
	&\leq 
	C\tau^4(\|\nabla\Div \mathbf{u}_{tt}\|^2_{L^2(0,T;L^2(\Omega))}
	+\|\xi_{ttt}\|^2_{L^\infty(0,T;L^2(\Omega))}
	+\|\eta_{ttt}\|^2_{L^\infty(0,T;L^2(\Omega))}
	+\|\eta_{ttt}\|^2_{L^2(0,T;L^2(\Omega))})
	\nonumber\\
	&\quad+Ch^{2(r+1)}(\|\Div\mathbf{u}_{tt}\|^2_{L^2(0,T;H^{r+1}(\Omega))}
	+\|\Div\mathbf{u}_{t}\|^2_{L^2(0,T;H^{r+1}(\Omega))})\nonumber\\
	&\quad+Ch^{2(r+1)}(\|\xi_t\|^2_{L^2(0,T;H^{r+1}(\Omega))}
	+\|\eta_t\|^2_{L^2(0,T;H^{r+1}(\Omega))}).
\end{align}

\begin{theorem}
	Suppose $\left\{(\mathbf{u}_h^n,\xi_h^n,\eta_h^n)\right\}_{n\geq0}$ is the numerical solution of Algorithm \ref{algorithm-2025-10-15}. Then the following error estimates hold:
	\begin{align}
		\max_{0\leq n\leq N}[
		\sqrt{\mu\lambda^*\kappa_3}\|\varepsilon(\mathbf{u}(t_n)-\mathbf{u}_h^{n})\|_0
		&+\sqrt{\kappa_3}
		\|\xi(t_n)-\xi_h^{n}\|_0\nonumber\\
		 +\sqrt{\kappa_2}\|\eta(t_n)-\eta_h^{n}\|_0]
		&\leq
		\hat{C}_1(T)\tau^2+\hat{C}_2(T)h^{r+1},
		\label{2025-11-15-2}\\
		\left[\frac{\tau}{\mu_f}\sum_{n=0}^{N}
		\|p(t_n)-p_h^n\|_0^2
		\right]^\frac{1}{2}
		&\leq \hat{C}_1(T)\tau^2+\hat{C}_2(T)h^{r+1},
		\label{2025-11-15-6}\\
		\left[\frac{\tau}{\mu_f}\sum_{n=0}^{N}
		\|\nabla (p(t_n)-p_h^n)\|_0^2
		\right]^\frac{1}{2}
		&\leq \hat{C}_1(T)\tau^2+\hat{C}_2(T)h^{r},\label{2025-11-15-3}		
	\end{align}	
	where
	\begin{align*}
		\hat{C}_1(T)&=C\left(
		\|\nabla\Div\mathbf{u}_{ttt}\|_{L^2(0,T;L^2(\Omega))}
		+\|\xi_{ttt}\|_{L^\infty(0,T;L^2(\Omega))}
		\right.\\
		&\qquad~\left.
		+\|\eta_{ttt}\|_{L^{\infty}(0,T;L^2(\Omega))}
		+\|\eta_{ttt}\|_{L^2(0,T;L^2(\Omega))}\right),\\
		\hat{C}_2(T)&=C\left(
		\|\Div\mathbf{u}_{tt}\|_{L^2(0,T;H^{r+1}(\Omega))}
		+\|\Div\mathbf{u}_t\|_{L^2(0,T;H^{r+1}(\Omega))}
		\right.	\\
		&\qquad~
		+\|\Div\mathbf{u}\|_{L^2(0,T;H^{r+1}(\Omega))}
		+\|\xi_t\|_{L^{2}(0,T;H^{r+1}(\Omega))}
		\\
		&\qquad~\left.
		+\|\xi\|_{L^{2}(0,T;H^{r+1}(\Omega))}
		+\|\eta\|_{L^{2}(0,T;H^{r+1}(\Omega))}
		\right.\\
		&\qquad~\left.	+\|\eta_t\|_{L^{2}(0,T;H^{r+1}(\Omega))}	
		\right)
	\end{align*}
\end{theorem}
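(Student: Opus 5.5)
The plan is to combine the discrete error bound \eqref{2025-11-15-1} with the projection estimates \eqref{2025-11-7-2}, \eqref{2025-11-7-4}, \eqref{2025-11-7-7} through the splittings $E=\Lambda+\Theta$ (or $E=\hat\Lambda+\hat\Theta$), and then use the triangle inequality.

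\textbf{Step 1: the $\max_n$ estimate \eqref{2025-11-15-2}.} Inequality \eqref{2025-11-15-1} controls the averaged quantities $\Theta^{l+1}+\Theta^{l}$, not $\Theta^{l+1}$ itself. To pass from sums to individual values:

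1. Since all initial errors vanish ($\Theta_{\mathbf u}^0=\Theta_\xi^0=\Theta_\eta^0=0$), write $\Theta^{l+1}=(\Theta^{l+1}+\Theta^{l})-\Theta^{l}$.
2. Argue by induction on $l$, as was done at the end of the proof of Theorem \ref{2025-11-3-1}.
3. The naive induction gives $\|\Theta^{l+1}\|\le \sum_{k\le l}\|\Theta^{k+1}+\Theta^k\|$, which loses a factor $N=T/\tau$.
4. To avoid this loss, write $\|\Theta^{l+1}\|^2\le 2\|\Theta^{l+1}+\Theta^l\|^2+2\|\Theta^l\|^2$ only at the first step. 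Otherwise use the alternating identity $\Theta^{l+1}=\sum_{k=0}^{l}(-1)^{l-k}(\Theta^{k+1}+\Theta^{k})$ together with the fact that the right-hand side of \eqref{2025-11-15-1} is uniform in $l$.
5. If that still loses a factor, re-run the energy argument of Theorem \ref{th-2025-11-7-1} with the single-level test functions. This is the standard Crank--Nicolson device: the identity $(a+b,a-b)=\|a\|^2-\|b\|^2$ gives telescoping of $\|\Theta^{l+1}\|^2$ directly.

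Step 1 is the main obstacle. The paper's energy functional $\mathcal J_h$ is written in terms of sums, so recovering pointwise-in-time control without losing $\tau^{-1}$ requires care. My first attempt is the telescoping reinterpretation, noting that $(\varepsilon(\Theta^{n+1}+\Theta^n),d_t\varepsilon(\Theta^{n+1}+\Theta^n))$ can equally be regrouped as a difference of $\|\varepsilon(\Theta^{n+1})\|^2$ terms.

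With pointwise bounds on $\Theta$ in hand, the $\Lambda$ parts are bounded by $Ch^{r+1}$ (and $Ch^{r+1}$ for $\varepsilon(\Lambda_{\mathbf u})$ from \eqref{2025-11-7-7} with $k=r+2$). We use $\|\Lambda(t_n)\|\le \|\Lambda(0)\|+\int_0^{t_n}\|\Lambda_t\|$, so the norms $\|\xi\|_{L^2(H^{r+1})}$, $\|\xi_t\|_{L^2(H^{r+1})}$ and so on appear, matching $\hat C_2$.

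\textbf{Step 2: the pressure estimates \eqref{2025-11-15-6} and \eqref{2025-11-15-3}.} The Remark gives the decomposition
\[
p(t_{n+\frac12})-p_h^{n+\frac12}=\hat Z_p^{n+\frac12}+\lambda^*\kappa_1R_{\mathbf u}^{n+\frac12}+\kappa_1\tfrac{\hat\Lambda_\xi^{n+1}+\hat\Lambda_\xi^n}{2}+\kappa_2\tfrac{\hat\Lambda_\eta^{n+1}+\hat\Lambda_\eta^n}{2}+\lambda^*\kappa_1d_t\hat\Lambda_q^{n+1}.
\]
Each term is handled as follows:

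1. $\tau\sum\|\nabla\hat Z_p\|^2$ is bounded by \eqref{2025-11-15-1} together with the uniform positive definiteness of $K$ ($K_1$).
2. $R_{\mathbf u}$ and $\nabla R_{\mathbf u}$ are bounded by Lemma \ref{2025-11-6-5}, giving $\tau^4$.
3. The $\hat\Lambda$ terms are bounded by \eqref{2025-11-7-4}. This gives order $h^{r+1}$ in $L^2$ but only $h^{r}$ for the gradients, which explains the $h^r$ in \eqref{2025-11-15-3}.
4. For the $L^2$ pressure bound, apply Poincar\'e's inequality to $\hat Z_p$, which has zero mean because $\mathcal S_h$ preserves the mean. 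This yields $h^{r+1}$.

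Finally, relate values at $t_n$ to the midpoint quantities via $p(t_n)=\frac12(p(t_{n+\frac12})+p(t_{n-\frac12}))+O(\tau^2p_{tt})$ and the definition \eqref{2025-10-20-4}. Summing over $n$ and taking square roots finishes the proof.
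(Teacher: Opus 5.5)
Your route is the paper's route: \eqref{2025-11-15-2} from \eqref{2025-11-15-1}, the projection bounds and the triangle inequality, and the pressure bounds from the $\hat Z_p$ decomposition. However, the step you single out as the main obstacle is never closed, and you cannot borrow it from the paper. Its proof settles it with ``mathematical induction and the boundedness of initial values'', which, starting from \eqref{2025-11-15-1} alone, can only be the naive induction you rightly reject. Your repairs do not work. Put $s^k:=\Theta^{k+1}+\Theta^k$ with $\Theta^0=0$. A bound on $\|s^k\|$ that is uniform in $k$ gives nothing better than $\|\Theta^{l+1}\|\le(l+1)\max_k\|s^k\|$, alternating identity or not: $\Theta^k=(-1)^k k\,\delta$ for a fixed function $\delta$ has $\|s^k\|=\|\delta\|$ for every $k$. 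What the alternating sum really needs is control of the increments. Grouping it in pairs gives
\[
\|\Theta^{l+1}\|\le\|s^0\|+\sum_{k=1}^{l}\|s^k-s^{k-1}\|\le\|s^0\|+\sqrt{T}\Big(\tau\sum_{k=1}^{l}\|d_ts^k\|^2\Big)^{1/2},
\]
so you would need $\tau\sum_k\|d_t(\Theta^{k+1}+\Theta^k)\|^2\le C(\tau^2+h^{r+1})^2$. The dissipative terms in \eqref{2025-11-7-1} carry an extra factor $\tau$: they bound $\tau^2\sum_k\|d_ts^k\|^2$, which loses $\tau^{-1/2}$. Moreover, \eqref{2025-11-15-1} discards even those, so a separate estimate for the time-differenced error equations would be required.

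Your ``first attempt'' is false. With $s^n=\Theta_{\mathbf u}^{n+1}+\Theta_{\mathbf u}^n$ one has $d_ts^n=(\Theta_{\mathbf u}^{n+1}-\Theta_{\mathbf u}^{n-1})/\tau$ and $(\varepsilon(s^n),\varepsilon(s^n-s^{n-1}))=\frac12\big(\|\varepsilon(s^n)\|_0^2-\|\varepsilon(s^{n-1})\|_0^2+\|\varepsilon(s^n-s^{n-1})\|_0^2\big)$. This telescopes $\|\varepsilon(s^n)\|_0^2$, not $\|\varepsilon(\Theta_{\mathbf u}^{n+1})\|_0^2$; the identity $(a+b,a-b)=\|a\|^2-\|b\|^2$ requires the test function $d_t\Theta_{\mathbf u}^{n+1}$. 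Your item 5 is only announced, and it does not transfer automatically. Testing \eqref{2025-11-5-1} with $d_t\Theta_{\mathbf u}^{n+1}$ produces the cross term $(\Theta_\xi^{n+1}+\Theta_\xi^n,d_t\Div\Theta_{\mathbf u}^{n+1})$. The discrete constraint \eqref{2025-11-5-2}, however, contains only $\Div(\Theta_{\mathbf u}^{n+1}+\Theta_{\mathbf u}^n)$, and after differencing only $\Div(\Theta_{\mathbf u}^{n+1}-\Theta_{\mathbf u}^{n-1})$, so it does not supply the term that would cancel it.

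The final step of your Step 2 fails as well. What $\hat Z_p^{n+\frac12}$ controls is the midpoint error $p(t_{n+\frac12})-p_h^{n+\frac12}$, whereas $p_h^n$ in \eqref{2025-10-20-4} is nodal, and
\begin{align*}
p_h^n-\tfrac12\big(p_h^{n+\frac12}+p_h^{n-\frac12}\big)
&=-\tfrac{\kappa_1}{4}\big(\xi_h^{n+1}-2\xi_h^n+\xi_h^{n-1}\big)-\tfrac{\kappa_2}{4}\big(\eta_h^{n+1}-2\eta_h^n+\eta_h^{n-1}\big)\\
&\quad-\tfrac{\lambda^*\kappa_1}{2}\big(d_t\Div\mathbf u_h^{n+1}-d_t\Div\mathbf u_h^{n}\big).
\end{align*}
The first two terms are harmless once Step 1 holds, but the last is first order. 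Since $d_t\Div\mathbf u_h^n$ is centred at $t_{n-\frac12}$, the error $p(t_n)-p_h^n$ contains the consistency error $\lambda^*\kappa_1\big(\Div\mathbf u_t(t_n)-d_t\Div\mathbf u(t_n)\big)=\frac{\lambda^*\kappa_1\tau}{2}\Div\mathbf u_{tt}(t_n)+O(\tau^2)$. Hence your argument gives a second-order bound only for the midpoint values $p_h^{n+\frac12}$, not for the nodal $p_h^n$ appearing in \eqref{2025-11-15-6}--\eqref{2025-11-15-3}. The paper's proof glosses over the same point: it bounds $\Theta_p^{n+\frac12}$ and then invokes $p(t_n)-p_h^n=\Lambda_p^n+\Theta_p^n$.

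Finally, your Poincar\'e step is unjustified as stated. Mean preservation of $\mathcal S_h$ only gives that $\hat\Theta_\xi,\hat\Theta_\eta,\hat\Theta_q$ have the means of $\xi-\xi_h$, $\eta-\eta_h$, $\Div(\mathbf u-\mathbf u_h)$. The last of these equals $\int_{\partial\Omega}(\mathbf u-\mathbf u_h)\cdot\mathbf n$, which need not vanish for this traction problem.
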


\begin{proof}
	According to \eqref{2025-11-15-1} and the triangle inequality
	\begin{align*}
		\mathbf u(t_{n})-\mathbf u_{h}^{n}=\Lambda_{\mathbf u}^{n}+\Theta_{\mathbf u}^{n},\quad\xi(t_{n})-\xi_{h}^{n}=\Lambda_{\xi}^{n}+\Theta_{\xi}^{n},\quad
		\eta(t_n)-\eta_h^n=\Lambda_\eta^n+\Theta_\eta^n,
	\end{align*}
	using mathematical induction and the boundedness of initial values, we obtain \eqref{2025-11-15-2}.
	
	The relationship between $\hat{Z}_p^{n+\frac{1}{2}}$ and $\Theta_p^{n+\frac{1}{2}}$ is
	\begin{align*}
		\hat{Z}_p^{n+\frac{1}{2}}
		&=\Theta_p^{n+\frac{1}{2}}+\lambda^*\kappa_1d_t(\Div\Lambda_{\mathbf{u}}^{n+1}-\hat{\Lambda}_q^{n+1})\\
		&\quad+\frac{\kappa_{1}}{2}(\Lambda_\xi^{n+1}-\hat{\Lambda}_\xi^{n+1}+\Lambda_\xi^{n}-\hat{\Lambda}_\xi^{n})
		+\frac{\kappa_{2}}{2}(\Lambda_\eta^{n+1}-\hat{\Lambda}_\eta^{n+1}+\Lambda_\eta^{n}-\hat{\Lambda}_\eta^{n}).
	\end{align*}
	Using the definition of projection operators, we have
	\begin{align}\label{2025-11-15-4}
		\frac{\tau}{\mu_f}\sum_{n=0}^{l}
		\|\nabla \Theta_p^{n+\frac{1}{2}}\|_0^2
		&\leq\frac{\tau}{\mu_f}\sum_{n=0}^{l}
		\|\nabla \hat{Z}_p^{n+\frac{1}{2}}\|_0^2
		+\frac{\tau}{\mu_f}\sum_{n=0}^{l}
		\|\lambda^*\kappa_1d_t\nabla(\Div\Lambda_{\mathbf{u}}^{n+1}-\hat{\Lambda}_q^{n+1})\|_0^2
		\nonumber\\
		&\quad
		+\frac{\tau}{\mu_f}\sum_{n=0}^{l}
		\|\frac{\kappa_{1}}{2}\nabla(\Lambda_\xi^{n+1}-\hat{\Lambda}_\xi^{n+1}+\Lambda_\xi^{n}-\hat{\Lambda}_\xi^{n})\|_0^2	\nonumber\\
		&\quad+\frac{\tau}{\mu_f}\sum_{n=0}^{l}
		\|\frac{\kappa_{2}}{2}\nabla(\Lambda_\eta^{n+1}-\hat{\Lambda}_\eta^{n+1}+\Lambda_\eta^{n}-\hat{\Lambda}_\eta^{n})\|_0^2
		\nonumber\\
		&\leq C_1(T)\tau^4+C_2(T)h^{2r+2}
		+Ch^{2r}\|\Div\mathbf{u}\|_{L^{2}(0,T;H^{r+1}(\Omega))}\nonumber\\
		&\quad+Ch^{2r}\|\xi\|_{L^{2}(0,T;H^{r+1}(\Omega))}
		+Ch^{2r}\|\eta\|_{L^{2}(0,T;H^{r+1}(\Omega))}.
	\end{align}
	Similarly, we have
	\begin{align}\label{2025-11-15-7}
		\frac{\tau}{\mu_f}\sum_{n=0}^{l}
		\|\Theta_p^{n+\frac{1}{2}}\|_0^2
		&\leq C_1(T)\tau^4+C_2(T)h^{2r+2}
		+Ch^{2r+2}\|\Div\mathbf{u}\|_{L^{2}(0,T;H^{r+1}(\Omega))}\nonumber\\
		&\quad+Ch^{2r+2}\|\xi\|_{L^{2}(0,T;H^{r+1}(\Omega))}
		+Ch^{2r+2}\|\eta\|_{L^{2}(0,T;H^{r+1}(\Omega))}.
	\end{align}
	According to the identity $p(t_n)-p_h^n=\Lambda_p^n+\Theta_p^n$, equations \eqref{2025-11-15-4}-\eqref{2025-11-15-7}, and the properties of projection operators $\mathcal{Q}_h$, $\mathcal{R}_h$, we obtain \eqref{2025-11-15-6} and \eqref{2025-11-15-3}. The proof is complete.
\end{proof}

\section{Numerical Examples}\label{2026-3-22-4}

In this section, we present several numerical examples to verify the theoretical results. The gravity term $\mathbf{g}$ is neglected. We define the maximum norm error over all time steps as
\begin{align*}
	R(h, \tau) = \max_{0 \leq n \leq N} \|\mathbf{u}^n - \mathbf{u}_h^n\|,
\end{align*}
where $\mathbf{u}^n$ and $\mathbf{u}_h^n$ denote the exact solution and numerical solution at time node $t_n$, respectively. 
Moreover, the observed convergence orders with respect to the spatial variable and temporal variable are measured by the following formulas:
\begin{align*}
	{\rm{order_{H}}}=\frac{\log(R(h,\tau)/R(\frac12h,\tau))}{\log2},
	\qquad	{\rm{order_{T}}}=\frac{\log(R(h,\tau)/R(h,\frac12\tau))}{\log2},
\end{align*}
where $h$ and $\tau$ represent the spatial mesh size and time step size, respectively. 
In the following examples, Taylor-Hood finite elements are employed for the numerical simulations.

{\bf Example 4.1~} 
Let $\Omega= (0,1)\times(0,1)$, $T=1$, $\Gamma_{1}= \{ (0,y);~0\leq y\leq1 \}$, $\Gamma_{2}= \{(1,y);~0\leq y\leq1 \}$, $\Gamma_{3}= \{(x,1);~0\leq x\leq1 \}$, $\Gamma_{4} = \{(x,0);~0\leq x\leq1\}$.\\
The body force $\mathbf{f}=[f_1,f_2]^{T}$ and the source term $\phi$ are given by
\begin{align*}
	 f_1(x,y,t) &= t^3\left[ \pi^2\left(\frac{3\mu}{2}+\lambda\right)\sin(\pi x)\sin(\pi y)
	  - 2\alpha\pi\sin(2\pi x)\cos(2\pi y) \right] \nonumber \\ 
	  &\quad + t^2\left[ 3\pi^2\lambda^*\sin(\pi x)\sin(\pi y) - 3\lambda^*(2x-1)(2y-1) \right]\\
	  &\quad-t^3\left(\frac{\mu}{2}+\lambda\right)(2x-1)(2y-1), 
\end{align*}
\begin{align*}
	  f_2(x,y,t) &= t^3\left[ -\pi^2\left(\frac{\mu}{2}+\lambda\right)\cos(\pi x)\cos(\pi y)  - 2\alpha\pi\cos(2\pi x)\sin(2\pi y) \right] \nonumber \\ 
	  &\quad + t^2\left[ -3\pi^2\lambda^*\cos(\pi x)\cos(\pi y) - 6\lambda^* x(x-1) \right]\\
	  &\quad -2t^3(\mu+\lambda)x(x-1) - \mu y(y-1),
\end{align*}
and
\begin{align*}
	\phi(x,y,t) &= 3t^2\left[ c_0 \cos(2\pi x)\cos(2\pi y) + \alpha\pi \cos(\pi x)\sin(\pi y) + \alpha x(x-1)(2y-1) \right] \\
	&- \frac{8\pi^2 K}{\mu_f} t^3 \cos(2\pi x)\cos(2\pi y).
\end{align*}
The initial and boundary conditions are prescribed as
\begin{align*}
	p(x,y,t) = t^3 \cos(2\pi x) \cos(2\pi y) &\qquad\mbox{on }\partial\Omega_T,\\
	u_1(x,y,t)= t^{3}\sin(\pi x)\sin(\pi y) &\qquad\mbox{on }\Gamma_j\times (0,T),\, j=2,4,\\
	u_2(x,y,t) = t^3 \, x(x-1) \, y(y-1) &\qquad\mbox{on }\Gamma_j\times (0,T),\, j=2,4,\\
	\sigma(\mathbf{u})\mathbf{n}-\alpha p\mathbf{n} = \mathbf{f}_1 &\qquad \mbox{on } \partial\Omega_T\backslash\Gamma_j,\\
	\mathbf{u}(x,y,0) = \mathbf{0}, \quad p(x,y,0) =0 &\qquad\mbox{in } \Omega.
\end{align*}
It is easy to verify that the exact solution is
\begin{align*}
	&\mathbf{u}(x,y,t)=
	\begin{pmatrix}
		u_1(x,y,t)\\
		u_2(x,y,t)
	\end{pmatrix}
	=
	\begin{pmatrix}
		t^3 \sin(\pi x) \sin(\pi y)\\
		t^3 \, x(x-1) \, y(y-1)
	\end{pmatrix},\\
	&p(x,y,t) =t^3 \cos(2\pi x) \cos(2\pi y).
\end{align*}
The boundary term $\mathbf{f}_1$ can be obtained from the exact solution. The values of physical parameters are listed in Table \ref{cn_tab_time}. Other physical quantities $\lambda$, $\mu$, $\kappa_1$, $\kappa_2$, $\kappa_3$ can be computed via \eqref{chap2-4-25-11} and \eqref{chapt4-4-25-2}.
\begin{table}[!h]
	\centering
	\caption{Values of Physical Parameters}\label{cn_tab_time}
	\begin{tabularx}{\textwidth}{c >{\centering\arraybackslash}X c}
		\toprule
		Parameter 		   & Description  & Value  \\ \midrule
		$\lambda^*$ & Secondary consolidation coefficient 	& 1e-6   \\
		$E$ 		& Young's modulus 		& 1e7\\
		$\nu$ 		& Poisson's ratio 			& 0.4\\
		$\alpha$	& Biot-Willis constant & 0.5\\
		$c_0$		& Constrained specific storage coefficient 	& 0.5\\
		$K$ 		& Permeability tensor 		& 1e-9\\
		$\mu_f$ 	& Fluid viscosity 			& 1\\
		\bottomrule
	\end{tabularx}
\end{table}
\begin{table}[!h]
	\centering
	\caption{Spatial error and corresponding convergence order for displacement $\mathbf{u}$ when $\tau=0.0001$}
	\label{cn_tab_space_convergence_u}
	\begin{tabularx}{\textwidth}{CCCCC}
		\toprule
		$h$ & $\|u-u_h\|_0$ & $L^2$-order & $\|u-u_h\|_1$ & $H^1$-order \\
		\midrule
		1/8 & 6.7991e-04 & - & 4.5462e-02 & - \\
		1/16 & 6.6777e-05 & 3.348 & 9.5779e-03 & 2.247 \\
		1/32 & 7.6058e-06 & 3.134 & 2.2252e-03 & 2.106 \\
		1/64 & 9.1896e-07 & 3.049 & 5.4040e-04 & 2.042 \\
		\bottomrule
	\end{tabularx}
\end{table}
\begin{table}[!h]
	\centering
	\caption{Spatial error and corresponding convergence order for pressure $p$ when $\tau=0.0001$}
	\label{cn_tab_space_convergence_p}
	\begin{tabularx}{\textwidth}{CCCCC}
		\toprule
		$h$ & $\|p-p_h\|_0$ & $L^2$-order & $\|p-p_h\|_1$ & $H^1$-order \\
		\midrule
		1/8 & 1.6934e-02 & - & 1.8340e+00 & - \\
		1/16 & 3.2550e-03 & 2.379 & 8.8519e-01 & 1.051 \\
		1/32 & 7.4333e-04 & 2.131 & 4.3785e-01 & 1.016 \\
		1/64 & 1.8118e-04 & 2.037 & 2.1830e-01 & 1.004 \\
		\bottomrule
	\end{tabularx}
\end{table}
\begin{table}[!h]
	\centering
	\caption{Temporal error and corresponding convergence order for displacement $\mathbf{u}$ when $h=\frac{1}{512}$}
	\label{cn_tab_time_convergence_u}
	\begin{tabularx}{\textwidth}{CCCCC}
	\toprule
	$\tau$ & $\|u-u_h\|_0$ & $L^2$-order & $\|u-u_h\|_1$ & $H^1$-order \\
	\midrule
	1/2 & 7.3784e-01 & - & 1.9577e+00 & - \\
	1/4 & 1.8446e-01 & 2.000 & 4.8944e-01 & 2.000 \\
	1/8 & 4.6115e-02 & 2.000 & 1.2236e-01 & 2.000 \\
	1/16& 1.1528e-02 & 2.000 & 3.0591e-02 & 2.000 \\
	\bottomrule
	\end{tabularx}
\end{table}
\begin{table}[!h]
	\centering
	\caption{Temporal error and corresponding convergence order for pressure $p$ when $h=\frac{1}{512}$}
	\label{cn_tab_time_convergence_p}
	\begin{tabularx}{\textwidth}{CCCCC}
	\toprule
	$\tau$ & $\|p-p_h\|_0$ & $L^2$-order & $\|p-p_h\|_1$ & $H^1$-order \\
	\midrule
	1/2 & 3.0935e-01 & - & 1.1389e+00 & - \\
	1/4 & 7.7337e-02 & 2.000 & 2.8599e-01 & 1.994 \\
	1/8 & 1.9334e-02 & 2.000 & 7.6258e-02 & 1.907 \\
	1/16& 4.8336e-03 & 2.000 & 3.2586e-02 & 1.227 \\
	\bottomrule
	\end{tabularx}
\end{table}
\begin{figure}[!h]
	\centering
	\begin{subfigure}[t]{0.4\linewidth}
		\centering
		\includegraphics[width=\linewidth]{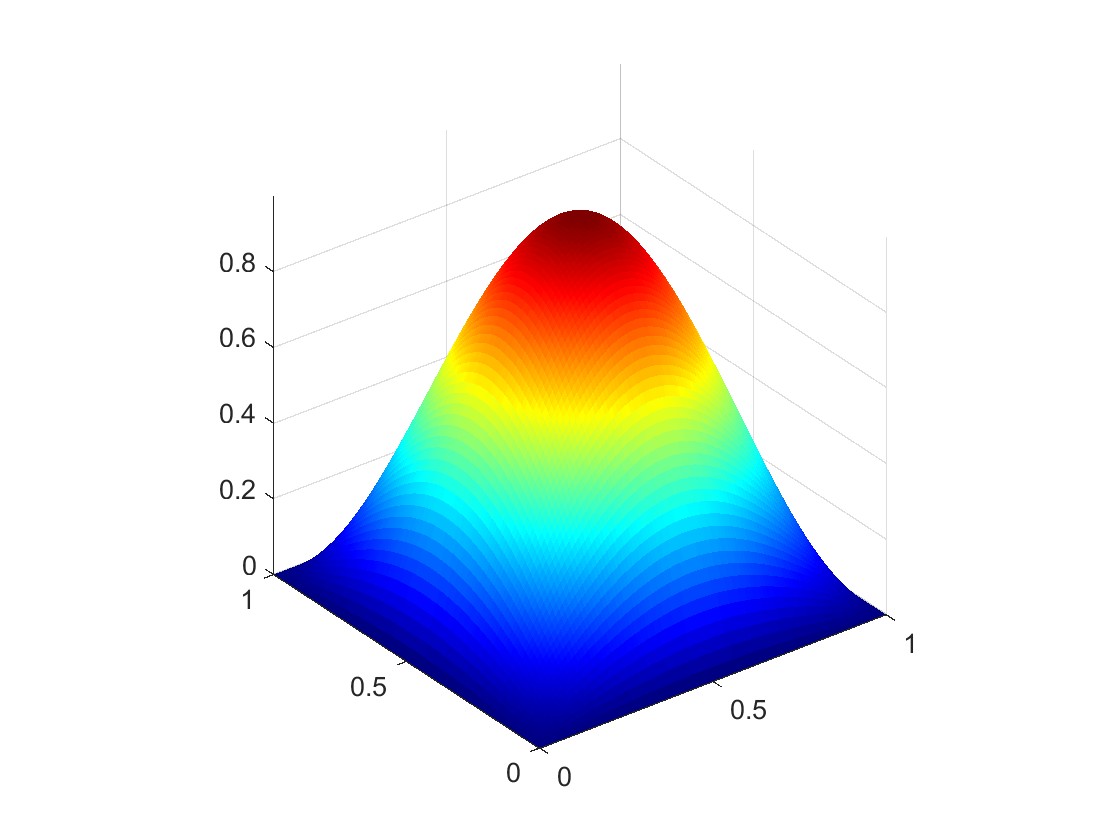}
		\caption{Exact solution}
		\label{chapt3_P2P1_test_u1}
	\end{subfigure}
	\hspace{0.05\linewidth}
	\begin{subfigure}[t]{0.4\linewidth}
		\centering
		\includegraphics[width=\linewidth]{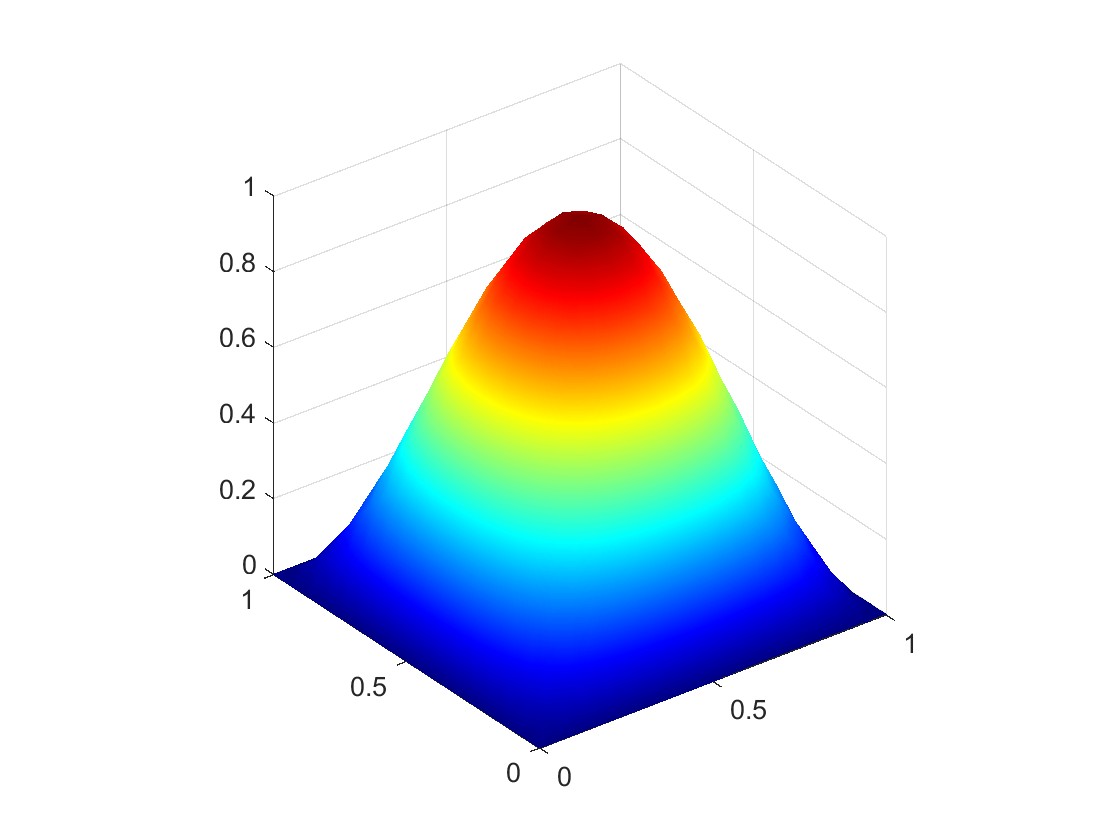}
		\caption{Numerical solution}
		\label{chapt3_P2P1_test_u1_h}
	\end{subfigure}
	\caption{Surface plots of displacement component $u_1$ at the final time $T$.}
	\label{chapt3_P2P1_test_u1_fig}
\end{figure}
\begin{figure}[!h]
	\centering
	\begin{subfigure}[t]{0.4\linewidth}
		\centering
		\includegraphics[width=\linewidth]{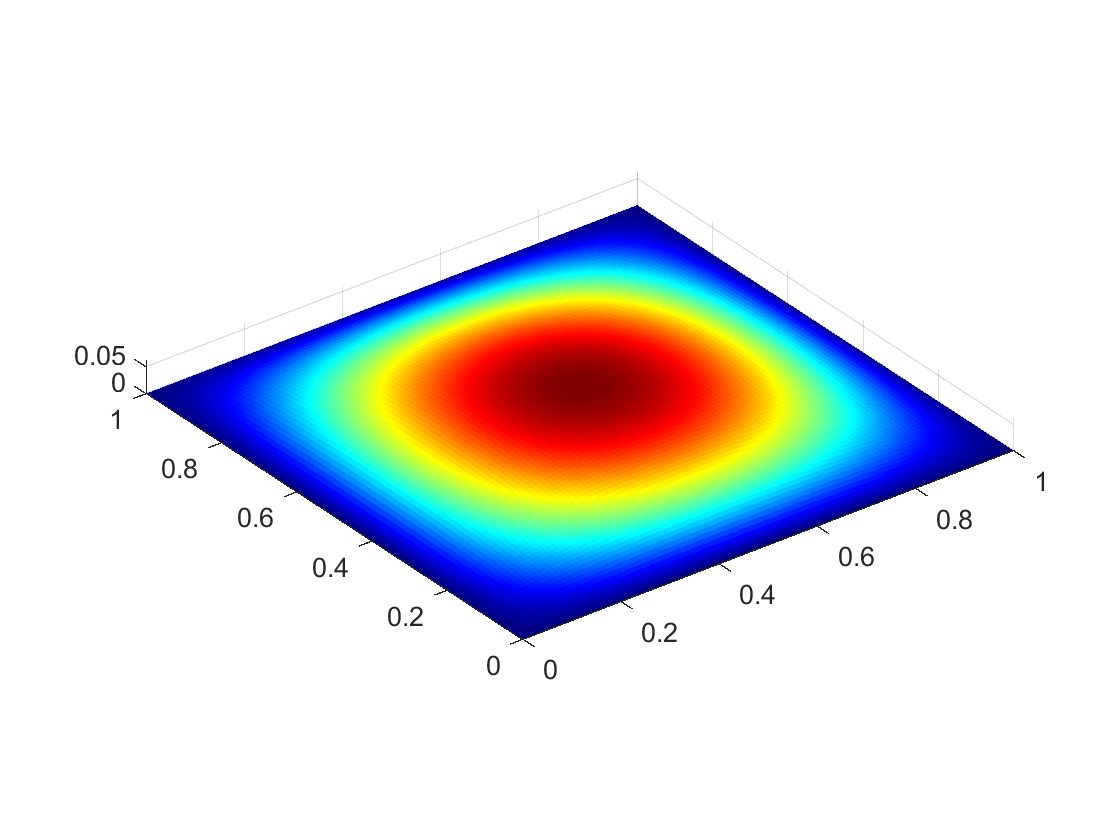}
		\caption{Exact solution}
		\label{chapt3_P2P1_test_u2}
	\end{subfigure}
	\hspace{0.05\linewidth}
	\begin{subfigure}[t]{0.4\linewidth}
		\centering
		\includegraphics[width=\linewidth]{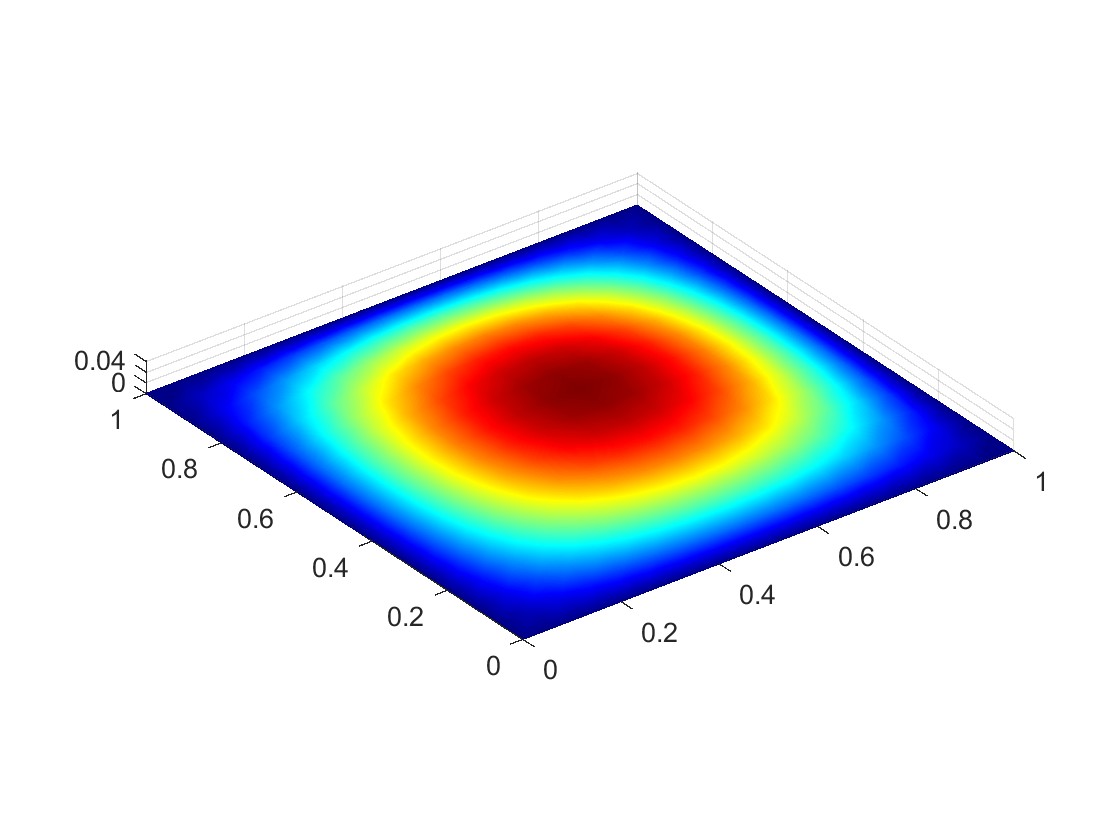}
		\caption{Numerical solution}
		\label{chapt3_P2P1_test_u2_h}
	\end{subfigure}
	\caption{Surface plots of displacement component $u_2$ at the final time $T$.}
	\label{chapt3_P2P1_test_u2_fig}
\end{figure}
\begin{figure}[!h]
	\centering
	\begin{subfigure}[t]{0.4\linewidth}
		\centering
		\includegraphics[width=\linewidth]{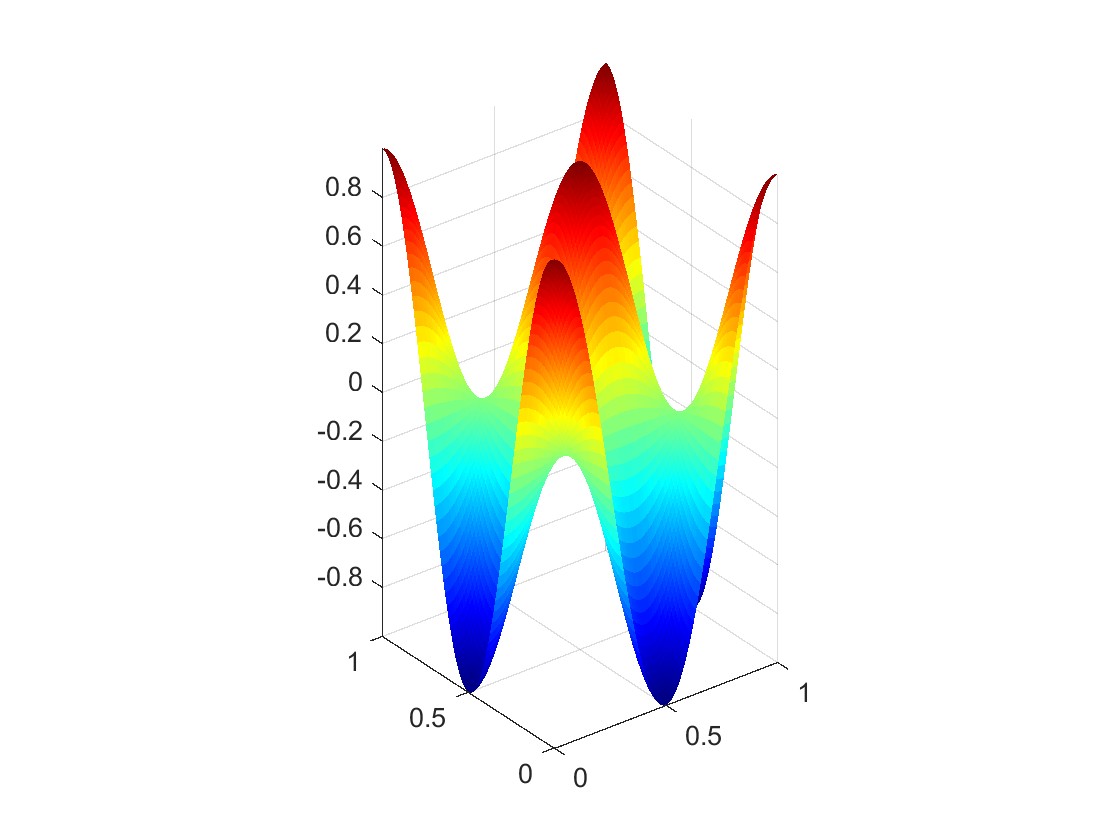}
		\caption{Exact solution}
		\label{chapt3_P2P1_test_p}
	\end{subfigure}
	\hspace{0.05\linewidth}
	\begin{subfigure}[t]{0.4\linewidth}
		\centering
		\includegraphics[width=\linewidth]{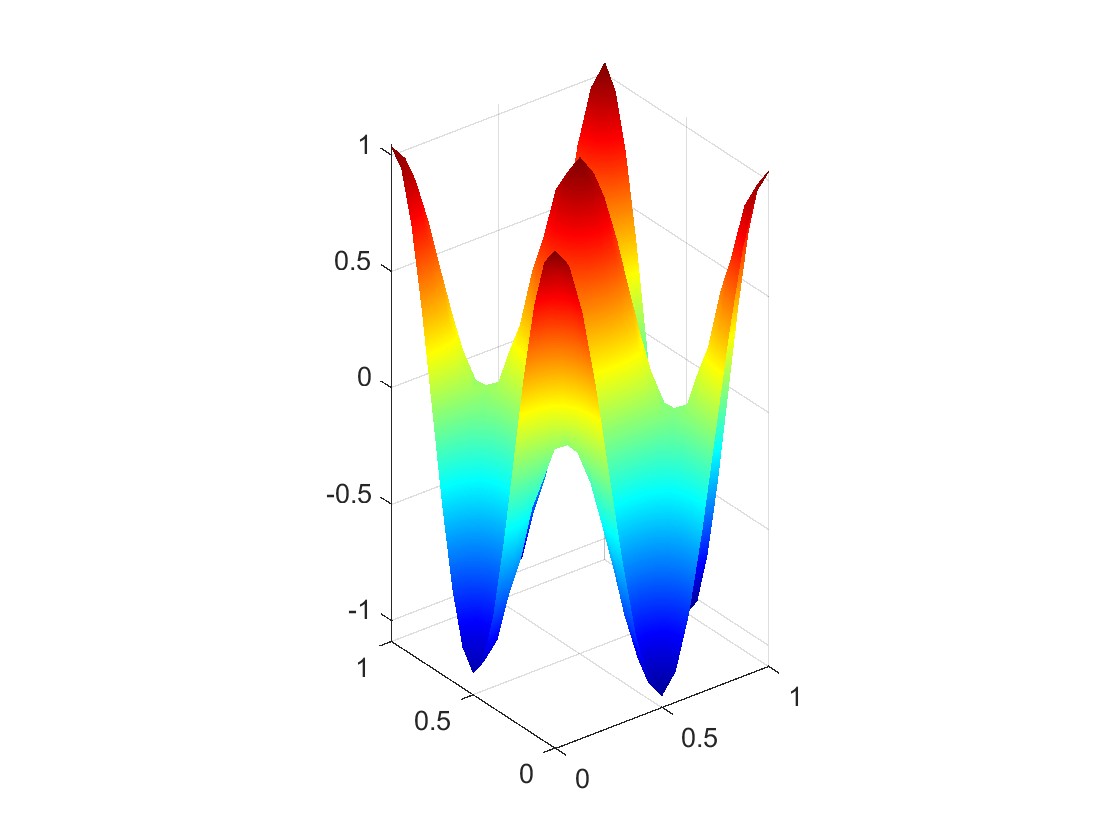}
		\caption{Numerical solution}
		\label{chapt3_P2P1_test_ph}
	\end{subfigure}
	\caption{Surface plots of pressure $p$ at the final time $T$.}
	\label{chapt3_P2P1_test_p_fig}
\end{figure}
Tables \ref{cn_tab_space_convergence_u} and \ref{cn_tab_space_convergence_p} present the numerical simulation results for displacement $\mathbf{u}$ and pressure $p$ in the spatial direction, respectively. The results show that the convergence orders of displacement $\mathbf{u}$ in the $L^2$ and $H^1$ norms are approximately $3$ and $2$, respectively; the convergence orders of pressure $p$ in the $L^2$ and $H^1$ norms are approximately $2$ and $1$, respectively. These results are consistent with the theoretical analysis.

Tables \ref{cn_tab_time_convergence_u} and \ref{cn_tab_time_convergence_p}
present the numerical simulation results for displacement $\mathbf{u}$ and pressure $p$ in the temporal direction, respectively. The results show that the convergence orders of both displacement $\mathbf{u}$ and pressure $p$ in the temporal direction are approximately $2$, which is consistent with the theoretical analysis.

Figures \ref{chapt3_P2P1_test_u1} through \ref{chapt3_P2P1_test_p_fig} display the surface plots of displacement components $u_1$, $u_2$, and pressure $p$ at the final time $T$, computed with mesh parameter $h=\frac{1}{16}$ and time step $\tau=h^2$. Panel (a) shows the exact solution (plotted on a $100 \times 100$ grid), and panel (b) shows the numerical solution obtained using the time-nonlocal multiphysics finite element method. The good agreement between them verifies the effectiveness of the numerical method.

{\bf Example 4.2~} 
This example mainly compares the error results between the Crank-Nicolson scheme and the backward Euler scheme.
Let $\Omega= (0,1)\times(0,1)$, $T=2$, $\Gamma_{1}= \{ (0,y);~0\leq y\leq1 \}$, $\Gamma_{2}= \{(1,y);~0\leq y\leq1 \}$, $\Gamma_{3}= \{(x,1);~0\leq x\leq1 \}$, $\Gamma_{4} = \{(x,0);~0\leq x\leq1\}$.\\
The body force $\mathbf{f}=[f_1,f_2]^{T}$ and the source term $\phi$ are given by
\begin{align*}
	f_1(x,y,t) &= e^t\Bigg\{
	-\mu\Bigl[2y(y-1) + x(x-1) + \tfrac12(2x-1)(2y-1)\Bigr] \\
	&- (\lambda+\lambda^*)\Bigl[2y(y-1) + (2x-1)(2y-1)\Bigr] 
	- 2\pi\alpha\,\sin(2\pi x)\cos(2\pi y)
	\Bigg\},
\end{align*}
\begin{align*}
	f_2(x,y,t) &= e^t\Bigg\{
	-\mu\Bigl[y(y-1) + 2x(x-1) + \tfrac12(2x-1)(2y-1)\Bigr] \\
	&- (\lambda+\lambda^*)\Bigl[(2x-1)(2y-1) + 2x(x-1)\Bigr] 
	- 2\pi\alpha\,\cos(2\pi x)\sin(2\pi y)
	\Bigg\},
\end{align*}
and
\begin{align*}
	\phi(x,y,t) &= e^t \Bigg\{
	\left(c_0 + \frac{8\pi^2 K}{\mu_f}\right) \cos(2\pi x)\cos(2\pi y) + \alpha\Big[(2x-1)y(y-1) + x(x-1)(2y-1)\Big]
	\Bigg\}.
\end{align*}
The initial and boundary conditions are prescribed as
\begin{align*}
	p(x,y,t) = e^{t} \cos(2\pi x) \cos(2\pi y) &\qquad\text{on~}\partial\Omega_T,\\
	u_1(x,y,t)= e^{t} \, x(x-1) \, y(y-1) &\qquad\text{on~}\Gamma_j\times (0,T),\, j=2,4,\\
	u_2(x,y,t) = e^{t} \, x(x-1) \, y(y-1) &\qquad\text{on~}\Gamma_j\times (0,T),\, j=2,4,\\
	\sigma(\mathbf{u})\mathbf{n}-\alpha p\mathbf{n} = \mathbf{f}_1 &\qquad \text{on~} \partial\Omega_T\backslash\Gamma_j,\\
	\mathbf{u}(x,y,0) = \mathbf{0}, \quad p(x,y,0) =0 &\qquad\text{in~} \Omega.
\end{align*}
It is easy to verify that the exact solution is
\begin{align*}
	&\mathbf{u}(x,y,t)=
	\begin{pmatrix}
		u_1(x,y,t)\\
		u_2(x,y,t)
	\end{pmatrix}
	=
	\begin{pmatrix}
		e^{t} \, x(x-1) \, y(y-1)\\
		e^{t} \, x(x-1) \, y(y-1)
	\end{pmatrix},\\
	&p(x,y,t) =e^{t} \cos(2\pi x) \cos(2\pi y).
\end{align*}
The boundary term $\mathbf{f}_1$ can be obtained from the exact solution. The values of physical parameters are listed in Table \ref{cn_tab_test2}. Other physical quantities $\lambda$, $\mu$, $\kappa_1$, $\kappa_2$, $\kappa_3$ can be computed via \eqref{chap2-4-25-11} and \eqref{chapt4-4-25-2}.
\begin{table}[!h]
	\centering
	\caption{Values of Physical Parameters}\label{cn_tab_test2}
	\begin{tabularx}{\textwidth}{c >{\centering\arraybackslash}X c}
		\toprule
		Parameter 		   & Description  & Value  \\ \midrule
		$\lambda^*$ & Secondary consolidation coefficient 	& 1e-6   \\
		$E$ 		& Young's modulus 		& 1e9\\
		$\nu$ 		& Poisson's ratio 			& 0.4\\
		$\alpha$	& Biot-Willis constant & 0.5\\
		$c_0$		& Constrained specific storage coefficient 	& 0.5\\
		$K$ 		& Permeability tensor 		& 1e-9\\
		$\mu_f$ 	& Fluid viscosity 			& 1\\
		\bottomrule
	\end{tabularx}
\end{table}
\begin{table}[!h]
	\centering
	\caption{Spatial error and corresponding convergence order for displacement $\mathbf{u}$ when $\tau=0.001$ and $T=2$}
	\label{cn_test2_space_convergence_u}
	\begin{tabularx}{\textwidth}{CCCCC}
		\toprule
		$h$ & $\|u-u_h\|_0$ & $L^2$-order & $\|u-u_h\|_1$ & $H^1$-order \\
		\midrule
	$1/8$   & 4.0961e-04 & -   & 2.8852e-02 & -   \\
	$1/16$  & 4.2300e-05 & 3.276 & 6.3881e-03 & 2.175 \\
	$1/32$  & 4.7723e-06 & 3.148 & 1.4921e-03 & 2.098 \\
	$1/64$  & 5.6994e-07 & 3.066 & 3.6000e-04 & 2.051\\
		\bottomrule
	\end{tabularx}
\end{table}
\begin{table}[!h]
	\centering
	\caption{Spatial error and corresponding convergence order for pressure $p$ when $\tau=0.001$ and $T=2$}
	\label{cn_test2_space_convergence_p}
	\begin{tabularx}{\textwidth}{CCCCC}
		\toprule
		$h$ & $\|p-p_h\|_0$ & $L^2$-order & $\|p-p_h\|_1$ & $H^1$-order \\
		\midrule
	$1/8$   & 1.3759e-01 & -   & 1.3366e+01 & -   \\
	$1/16$  & 2.8018e-02 & 2.296 & 6.5151e+00 & 1.037 \\
	$1/32$  & 6.5630e-03 & 2.094 & 3.2320e+00 & 1.011 \\
	$1/64$  & 1.6120e-03 & 2.026 & 1.6126e+00 & 1.003 \\
		\bottomrule
	\end{tabularx}
\end{table}

Tables \ref{cn_test2_space_convergence_u} and \ref{cn_test2_space_convergence_p} present the numerical simulation results for displacement $\mathbf{u}$ and pressure $p$ in the spatial direction, respectively. The results show that the convergence orders of displacement $\mathbf{u}$ in the $L^2$ and $H^1$ norms are approximately $3$ and $2$, respectively; the convergence orders of pressure $p$ in the $L^2$ and $H^1$ norms are approximately $2$ and $1$, respectively. These results are consistent with the theoretical analysis.

When the backward Euler scheme is employed for temporal discretization, we use the same physical parameter values as shown in Table \ref{cn_tab_test2} and the same time step size $\tau = 0.001$. The resulting errors and corresponding convergence orders are as follows:

\begin{table}[!h]
	\centering
	\caption{Spatial error and corresponding convergence order for displacement $\mathbf{u}$ when $\tau=0.001$ and $T=2$}
	\label{Euler_cn_tab_space_convergence_u}
	\begin{tabularx}{\textwidth}{CCCCC}
		\toprule
		$h$ & $\|u-u_h\|_0$ & $L^2$-order & $\|u-u_h\|_1$ & $H^1$-order \\
		\midrule
	$1/8$   & 4.5642e-04 & -   & 3.1374e-02 & -   \\
	$1/16$  & 4.5468e-05 & 3.327 & 6.7361e-03 & 2.220 \\
	$1/32$  & 4.9805e-06 & 3.190 & 1.5378e-03 & 2.131 \\
	$1/64$  & 5.7963e-07 & 3.103 & 3.6587e-04 & 2.071 \\
		\bottomrule
	\end{tabularx}
\end{table}
\begin{table}[!h]
	\centering
	\caption{Spatial error and corresponding convergence order for pressure $p$ when $\tau=0.001$ and $T=2$}
	\label{Euler_cn_tab_space_convergence_p}
	\begin{tabularx}{\textwidth}{CCCCC}
		\toprule
		$h$ & $\|p-p_h\|_0$ & $L^2$-order & $\|p-p_h\|_1$ & $H^1$-order \\
		\midrule
	$1/8$   & 1.4116e-01 & -   & 1.3373e+01 & -   \\
	$1/16$  & 2.7950e-02 & 2.336 & 6.5179e+00 & 1.037 \\
	$1/32$  & 6.3829e-03 & 2.131 & 3.2334e+00 & 1.011 \\
	$1/64$  & 4.0789e-03 & 0.646 & 1.6134e+00 & 1.003 \\
		\bottomrule
	\end{tabularx}
\end{table}

Tables \ref{Euler_cn_tab_space_convergence_u} and \ref{Euler_cn_tab_space_convergence_p} also present the numerical simulation results for displacement $\mathbf{u}$ and pressure $p$ in the spatial direction, respectively. The results show that Algorithm $\mathrm{\ref{algorithm-2025-10-15}}$ based on the backward Euler scheme, under exactly the same time step size and total simulation time, exhibits a sharp decline in the $L^2$ error convergence order of pressure to $0.646$ when $h=1/64$, showing obvious convergence order degradation. The absolute value of pressure error is also larger (the $L^2$ error of pressure at $h=1/64$ is $4.0789\times10^{-3}$, approximately $2.5$ times that of the Crank-Nicolson scheme). Theoretically, the backward Euler scheme is a first-order temporal discretization method with relatively large numerical dissipation, and the truncation error at each step accumulates continuously over time, leading to the decline in convergence order. To improve the computational results of the backward Euler scheme, one can reduce the time step size or shorten the total simulation time to mitigate the effects of dissipation accumulation.

Comparing the performance of the two schemes under the same time step size and total simulation time, the Crank-Nicolson scheme is significantly superior to the backward Euler scheme in both the convergence order and absolute error of the pressure field, and also shows slight advantages in the accuracy of the displacement field. As a second-order unconditionally stable scheme, the Crank-Nicolson scheme possesses time-reversal symmetry and lower numerical dissipation, enabling it to better maintain solution accuracy for long-time evolution problems. Therefore, for poroelasticity problems requiring long-time integration (such as $T=2$ and above), it is recommended to prioritize the Crank-Nicolson scheme for temporal discretization to obtain more reliable and accurate numerical results.

\section{Conclusion}\label{2026-4-1-1}

In this paper, we construct a time-nonlocal multiphysics finite element method based on the Crank-Nicolson scheme for the reformulated poroelasticity model. 

For the case where the physical parameters $\lambda,\lambda^*$ and $c_0$ are all finite positive constants, by introducing two auxiliary variables, the original strongly coupled model is reformulated into a generalized Stokes equation with time integral terms and a diffusion equation. 
Then a time-nonlocal multiphysics finite element method with the Crank-Nicolson scheme is proposed. In terms of discrete scheme design, the spatial discretization uses the high-order Taylor-Hood mixed finite element method: displacement $\mathbf{u}$ uses $r+1$-th order Lagrange elements, $\xi$ and $\eta$ use $r$-th order Lagrange elements, and the temporal discretization uses the Crank-Nicolson scheme. Systematic theoretical analysis is conducted in this paper, including the
stability analysis of the fully discrete scheme and the optimal-order error estimates. 
The theoretical results show that: the convergence order of displacement $\mathbf{u}$ in the $H^1$ norm is $O(\tau^2+h^{r+1})$, and the convergence orders of pressure $p$ in the $L^2$ and $H^1$ norms are $O(\tau^2+h^{r+1})$ and $O(\tau^2+h^r)$, respectively.

\bibliographystyle{unsrt}  
\bibliography{reference}  

\end{document}